\documentclass[12pt]{amsart}
\usepackage{fullpage,url,amssymb,amsxtra,enumerate,colonequals, multirow}
\usepackage[all]{xy} % for complicated commutative diagrams, use pdf driver
\usepackage{mathrsfs} % for \mathscr (script letters)
\usepackage{array, booktabs, tikz-cd}
\usepackage[T1]{fontenc}

% Color comments!
\usepackage{color}

\newcommand{\defi}[1]{\textsf{#1}} % for defined terms

\newenvironment{enumalph}
{\begin{enumerate}}
{\end{enumerate}}

\newenvironment{enumroman}
{\begin{enumerate}}
{\end{enumerate}}

\newenvironment{enumromanii}
{\begin{enumerate}}
{\end{enumerate}}

\newcommand{\NE}{N_E}

% Characters

\newcommand{\C}{\mathbb{C}}
\newcommand{\F}{\mathbb{F}} 
\newcommand{\Fbar}{F^{\textup{al}}}
\newcommand{\Fun}{F^{\textup{un}}}
\newcommand{\Qun}{\Q^{\textup{un}}}

\newcommand{\Q}{\mathbb{Q}}
\newcommand{\R}{\mathbb{R}}
\newcommand{\Z}{\mathbb{Z}}

\newcommand{\rhobar}{{\overline{\rho}}}

\newcommand{\tauex}[1]{\tau_{\textup{ex},2,#1}}

\newcommand{\tauspnew}[1]{\tau_{\textup{St},#1}}
\newcommand{\tauPSnew}[2]{\tau_{\textup{ps},#1}{(#2)}}
\newcommand{\tauSCnew}[2]{\tau_{\textup{sc},#1}{(#2)}}

\newcommand{\vv}{v}

\newcommand{\eps}{\varepsilon}

\newcommand{\epschar}[1]{\varepsilon_{#1}}
\newcommand{\chichar}[1]{\chi_{#1}}

\newcommand{\spArt}{^{\textup{A}}}
\newcommand{\spart}{\spArt}

\newcommand{\spun}{{}^{\textup{un}}}

\newcommand{\Mquad}{K}

% mathcal characters

\newcommand{\calO}{\mathcal{O}}

%mathfrak characters

\newcommand{\fp}{\mathfrak{p}}

\newcommand{\frakf}{\mathfrak{f}}

\newcommand{\frakp}{\mathfrak{p}}
\newcommand{\frakq}{\mathfrak{q}}

% Math operators

\DeclareMathOperator{\condexp}{condexp}

\DeclareMathOperator{\Fr}{Fr}
\DeclareMathOperator{\cond}{cond}
\DeclareMathOperator{\Art}{Art}

\DeclareMathOperator{\Gal}{Gal}

\DeclareMathOperator{\Ind}{Ind}

\DeclareMathOperator{\Nm}{Nm}

\DeclareMathOperator{\ord}{ord}

\DeclareMathOperator{\Sp}{St}

\DeclareMathOperator{\PS}{PS}

\DeclareMathOperator{\ver}{ver}
\DeclareMathOperator{\WD}{WD}

\newcommand{\BCKF}{\Ind_{W_{\Mquad}}^{W_F}}
% \newcommand{\BC}{\BCempty}

% Text subscripts, superscripts

\newcommand{\GL}{\operatorname{GL}}

\newcommand{\PGL}{\operatorname{PGL}}

\newcommand{\SL}{\operatorname{SL}}

 % binary direct sum
 % direct sum of a collection

 % binary intersection
 % intersection of a collection

 % binary tensor product
 % tensor product of a collection
 % binary union
 % union of a collection

%%% \renewcommand{\baselinestretch}{3}   % for triple spacing
% \definecolor{darkgreen}{rgb}{0,0.5,0}
\usepackage[
%       draft,
        colorlinks, 
        citecolor=blue,
        backref,
        urlcolor=blue,
        linkcolor=blue,
        pdfauthor={Lassina Demb\'el\'e, Nuno Freitas, and John Voight}, 
        pdftitle={On Galois inertial types of elliptic curves}
]{hyperref}

\newcommand{\LMFDBE}[1]{\href{https://www.lmfdb.org/EllipticCurve/Q/#1}{\textsf{#1}}}
\newcommand{\LMFDBL}[1]{\href{https://www.lmfdb.org/padicField/#1}{\textsf{#1}}}
\newcommand{\LMFDBG}[1]{\href{https://www.lmfdb.org/GaloisGroup/#1}{\textsf{#1}}}

\numberwithin{equation}{subsection}
\newtheorem{theorem}[equation]{Theorem}
\newtheorem{lemma}[equation]{Lemma}
\newtheorem{corollary}[equation]{Corollary}
\newtheorem{cor}[equation]{Corollary}
\newtheorem{proposition}[equation]{Proposition}
\newtheorem*{maintheorem*}{Main Theorem}

\theoremstyle{definition}
\newtheorem{definition}[equation]{Definition}

\theoremstyle{remark}
\newtheorem{example}[equation]{Example}

\newtheorem{remark}[equation]{Remark}

\usepackage{comment, todonotes}

\begin{document}

\title{On Galois inertial types of elliptic curves over $\Q_p$}
\subjclass[2010]{Primary 11G07, 11F70; Secondary 11F80, 11S37}

\author{Lassina Demb\'el\'e}
\address{Department of Mathematics, King's College London, Strand, London WC2R 2LS, UK}
\email{lassina.dembele@kcl.ac.uk}

\author{Nuno Freitas}
\address{
Instituto de Ciencias Matem\'aticas, CSIC, 
Calle Nicol\'as Cabrera
13--15, 28049 Madrid, Spain}
\email{nuno.freitas@icmat.es}

\author{John Voight}
\address{Department of Mathematics, Dartmouth College, 6188 Kemeny Hall, Hanover, NH 03755, USA}
\email{jvoight@gmail.com}
\urladdr{\url{http://www.math.dartmouth.edu/~jvoight/}}

\date{\today}

\begin{abstract}
We provide a complete, explicit description of the inertial Weil--Deligne types arising from elliptic curves over $\Q_p$ for $p$ prime.  
\end{abstract}

\maketitle

\section{Introduction}

\subsection{Motivation}

In the classification of finite-dimensional complex representations of the absolute Galois group of a local field, it has proven to be very useful to classify by restriction to the inertia subgroup \cite{bk93,Pas05,bh14,lat15}.  In this article, we will pursue an explicit classification for such representations coming from elliptic curves.

Let $p$ be prime and let $F \supseteq \Q_p$ be a finite extension with algebraic closure $\Fbar$.  Let $W_F$ be the Weil group of $F$, the subgroup of $\Gal(\Fbar\,|\,F)$ acting by an integer power of the Frobenius map on the maximal unramified subextension.  Let $(\rho\colon W_F \to \GL_n(\C),N)$ be an $n$-dimensional (complex) Weil--Deligne representation 
(Definition \ref{defn:WDRep}).  Let $I_F \leq W_F$ be the inertia subgroup.  An \defi{inertial (Weil--Deligne) type} (also called a \defi{Galois inertial type}) is a pair $(\tau,N)$ where $\tau = \rho|_{I_F}$ for a Weil--Deligne representation $(\rho,N)$.  To ease notation, we will often abbreviate the pair $(\tau,N)$ by $\tau$ (and indeed often we have $N=0$ anyway).  

Already the case $n=2$ is interesting and rich, and we will consider this case here.  Inertial types for $2$-dimensional representations were introduced by Conrad--Diamond--Taylor \cite{cdt99} and Breuil--Conrad--Diamond--Taylor \cite{bcdt01} in the study of deformation rings of Galois representations and were used in the proof of modularity of elliptic curves over~$\Q$.  
Diamond--Kramer \cite[Appendix]{dia95} described the analogously defined type (as in Diamond \cite{Diamond})
of the mod $p$ Galois representation~$\overline{\rho}_{E,p}$ attached to an elliptic curve $E$ over $F$ in terms of the $j$-invariant of $E$; in particular, they give
a description of the restriction of $\overline{\rho}_{E,p}$ to $I_F$ in as much detail as possible using only $j(E)$.

Types have also been studied in the context of Galois representations attached more generally to classical modular forms. For example, in Loeffler--Weinstein~\cite{LW2012, LW2015} 
an algorithm to determine the restriction to decomposition groups of such representations was described and implemented; this includes a description of the inertial type.
By counting the number of inertial types attached to modular forms, Dieulefait--Pacetti--Tsaknias~\cite{DPT} have given a precise generalization of the Maeda conjecture.
 
Additionally, inertial types have played a prominent role in the mod $p$ and $p$-adic Langlands program.  Henniart \cite[Appendix]{bm02} showed that there is an \emph{inertial Langlands correspondence} between $2$-dimensional Galois inertial types of $F$ and smooth representations of $\GL_2(\calO_F)$,
where $\calO_F$ denotes the ring of integers of $F$.  Indeed, the Breuil--M\'ezard conjecture~\cite{bm02} for $\Q_p$ can be seen as a refinement of Serre's conjecture over $\Q$, where inertial types are a crucial input.  An inertial Langlands correspondence for general $n \geq 2$ was proven by Pa\v{s}k\={u}nas \cite{Pas05}.  

Diophantine applications provide another important motivation to study inertial types for $\GL_2$. In Bennett--Skinner~\cite{BenSki}, the \emph{image of inertia argument} was introduced and successfully applied to solve certain Fermat equations. Recently, further refinements and applications of this argument were obtained 
by Billerey--Chen--Dieulefait--Freitas \cite{BCDFmulti}: we may be able to distinguish between the mod~$p$ representations attached to elliptic curves over a global field by showing they have different images of inertia \cite[Section~3]{BCDFmulti}.
Therefore, the more we know about inertial types of elliptic curves, the greater the applicability of this argument. In this direction, 
Freitas--Naskr{\k e}cki--Stoll \cite[Theorem~3.1]{FNS23n} describe the possible fixed fields of the restriction $\overline{\rho}_{E,p}|_{I_{\Q_p}}$ to inertia for elliptic curves $E$ over $\Q_p$ with certain reduction types at $p=2,3$, and they applied this to study solutions of the generalized Fermat equation $x^2+y^3=z^p$.

In light of these applications, the goal of this paper is to give a complete, explicit description of the inertial types for all elliptic curves $E$ over $\Q_p$.
Our main theorem (Theorem \ref{thm:mainthm} below) has already been applied to the determination of the symplectic type of isomorphisms between the $p$-torsion of elliptic curve by Freitas--Kraus \cite{fk17}.

\subsection{Main result}

Let $E$ be an elliptic curve over $F=\Q_p$.  Attached to $E$ is an inertial Weil--Deligne type $\tau_E$ obtained from the action on the (dual of the) $\ell$-adic Tate module for a prime $\ell \neq p$, independent of $\ell$ (for details, see section \ref{sec:3point1}).  If $E$ has potentially good reduction, then this good reduction is obtained over a minimal finite extension $L \supseteq F\spun$ where $F\spun$ denotes the maximal unramified extension of~$F$, and we define the \defi{semistability defect} of $E$ to be $e_E \colonequals [L:F\spun]$.

Our main result (combining Lemma~\ref{L:e=2}, Propositions \ref{prop:Emult}, \ref{P:peq5}, and \ref{P:elleq3prop}, and Theorems \ref{thm:nonexceptell2} and \ref{thm:exceptell2}) is as follows.   

\begin{maintheorem*} \label{thm:mainthm}
Let $E$ be an elliptic curve over $\Q_p$ with conductor $\NE$ and inertial Weil--Deligne type~$\tau_E$; if $E$ has additive,  potentially good reduction, let $e_E$ be its semistability defect.  Then $\tau_E$ is classified up to equivalence according to Table~\textup{\ref{MainTable}}.
\end{maintheorem*}

\begin{table} 
\normalfont\small\renewcommand{\arraystretch}{1.25}
\begin{tabular}{l|l|l|l|l|l} 
Reduction type & $p$ & $e_E$ & $v_p(\NE)$ & $\tau_E$ & Description \\ \hline\hline

good  & - & - & $0$ & trivial & trivial \\
\hline
multiplicative  & - & - & $1$ & $\tauspnew{p}$ & special \\
\hline
\multirow{3}{*}{\parbox[c]{15ex}{additive, \\ potentially \\ multiplicative}} & $\geq 3$ & - & $2$ & $\tauspnew{p} \otimes \epschar{p}$ & \multirow{3}{*}{special} \\
\cline{2-5}
& \multirow{2}{*}{$2$} & \multirow{2}{*}{-} & $4$ & $\tauspnew{2} \otimes \epschar{-4}$ \\
\cline{4-5}
& & & $6$ & $\tauspnew{2} \otimes \epschar{\pm 8}$ \\
\hline
\multirow{27}{*}{\parbox[c]{15ex}{additive, \\ potentially \\ good}} & \multirow{3}{*}{$\geq 5$} & $2$ & \multirow{3}{*}{$2$} & $\epschar{p}$ & \multirow{2}{*}{principal series} \\
\cline{3-3} \cline{5-5}
& & $3,4,6 \mid (p-1)$ & & $\tauPSnew{p}{1,1,e}$ \\
\cline{3-3} \cline{5-6}
& & $3,4,6 \mid (p+1)$ & & $\tauSCnew{p}{u,2,e}$ & \parbox[c][3ex]{15ex}{supercuspidal} \\
\cline{2-6}
 & \multirow{9}{*}{$3$} & $2$ & $2$ & $\epschar{3}$ & \multirow{2}{*}{principal series} \\
  \cline{3-5}
 & & $3$ & $4$ & $\tauPSnew{3}{1,2,3}$ & \\
 \cline{3-6}
 & & $3$ & $4$ & $\tauSCnew{3}{-1,2,3}$ & \multirow{2}{*}{supercuspidal} \\
 \cline{3-5}
 &  & $4$ & $2$ & $\tauSCnew{3}{-1,1,4}$ &  \\
 \cline{3-6}
 & & $6$ & $4$ & $\tauPSnew{3}{1,2,3} \otimes \epschar{3}$ & principal series \\
 \cline{3-6}
 & & $6$ & $4$ & $\tauSCnew{3}{-1,2,3} \otimes \epschar{3}$ &  \multirow{3}{*}{\parbox[c][6ex]{15ex}{supercuspidal}}\\
 \cline{3-5}
 & & \multirow{2}{*}{$12$} & $3$ & $\tauSCnew{3}{\pm 3,2,6}$ & \\
 \cline{4-5}
 & &  & $5$ & $\tauSCnew{3}{-3,4,6}_j$\ \ ($j=0,1,2$) \\
 \cline{2-6}
 & \multirow{17}{*}{$2$} & \multirow{2}{*}{$2$} & $4$ & $\epschar{-4}$ & 
 \multirow{2}{*}{principal series} \\
 \cline{4-5}
 & & & $6$ & $\epschar{\pm 8}$ &  \\
 \cline{3-6}
 &  & $3$ & $2$ & $\tauSCnew{2}{5,1,3}$ & supercuspidal \\
 \cline{3-6}
 & & \multirow{2}{*}{$4$} & \multirow{2}{*}{$8$} & \parbox[c][3ex]{30ex}{$\tauPSnew{2}{1,4,4} \otimes \epschar{d}$\ \  ($d=1,-4$)} & 
 \multirow{1}{*}{principal series}  \\
 \cline{5-6}
 & &  &  & \parbox[c][3ex]{30ex}{$\tauSCnew{2}{5,4,4} \otimes \epschar{d}$\ \  ($d=1,-4$)} & \multirow{8}{*}{\parbox[c][6ex]{15ex}{supercuspidal}} \\
 \cline{3-5}
 & & \multirow{2}{*}{$6$} & $4$ & $\tauSCnew{2}{5,1,3} \otimes \epschar{-4}$ & \\
 \cline{4-5}
 & &  & $6$ & \parbox[c][3ex]{20ex}{$\tauSCnew{2}{5,1,3} \otimes \epschar{\pm 8}$} &  \\
 \cline{3-5}
 & & \multirow{4}{*}{$8$} & $5$ &  \parbox[c][6ex]{20ex}{$\tauSCnew{2}{-4,3,4}$, \\ $\tauSCnew{2}{-20,3,4}$} \\
 \cline{4-5}
 & & & $6$ & \parbox[c][6ex]{22ex}{$\tauSCnew{2}{-4,3,4} \otimes \epschar{8}$, \\ $\tauSCnew{2}{-20,3,4} \otimes \epschar{8}$} &  \\
 \cline{4-5}
 & &  & 8 & \parbox[c][3ex]{30ex}{$\tauSCnew{2}{-4,6,4} \otimes \epschar{d}$\ \  ($d=1,-4$)} &  \\\cline{3-6}
 & & \multirow{5}{*}{$24$} & $3$  & $\tauex{1}$ & \multirow{4}{*}{\parbox[c]{15ex}{exceptional \\ supercuspidal}} \\
 \cline{4-5}
 & & & $4$ & $\tauex{1} \otimes \epschar{-4}$ \\
 \cline{4-5}
 & & & $6$ & $\tauex{1} \otimes \epschar{\pm 8}$ \\
 \cline{4-5}
 & & & $7$ & \parbox[c][3ex]{26ex}{$\tauex{2} \otimes \epschar{d}$ ($d=1,-4,\pm 8$)} 
\end{tabular}
\caption{\rule{0pt}{2.5ex}Inertial WD-types for elliptic curves over $\Q_p$}
\label{MainTable}
\end{table}

The notation in Table \ref{MainTable} is explained in section \ref{sec:notation}.  In Table~\ref{MainTable}, the exceptional (or primitive) supercuspidal representations are labelled as such and collected in the last rows of the table, whereas the nonexceptional (imprimitive) supercuspidal representations are labelled simply \emph{supercuspidal}, for brevity.

All types in Table~\ref{MainTable} arise for an elliptic curve over $\Q_p$: see Tables~\ref{table:types-Q3}, \ref{table:types-Q2:sc}, and~\ref{table:types-Q2:ex}.  See Dokchitser--Dokchitser \cite{dd-kodaira} for computation of Kodaira types---the type restricts the possible Kodaira types but not necessarily uniquely, so for simplicity we do not include them in our table.  

Our method of proof of the Main Theorem is by direct calculation: we deduce the inertial type associated to an elliptic curve over $\Q_p$ in terms of its reduction type.  We have endeavored to streamline these calculations while still remaining comprehensive and as self-contained as possible.  Many of our calculations are performed in the computer algebra system \textsf{Magma} \cite{Magma}; the code is available online \cite{ourcode}.  

Indeed, many of these calculations can be found in other places in the literature: for example, the $3$-adic types are already implicitly given in the proof of the modularity theorem \cite{bcdt01}, and Dieulefait--Pacetti--Tsaknias~\cite{DPT} more generally identify local invariants of Galois orbits of classical newforms (relevant here for weight $k=2$).  Coppola \cite{Cop20a,Cop20b} has recently studied wild Galois representations ($p=3$ and $e=12$; $p=2$ and $e=8,24$) over more general local fields, classifying the Galois representation up to isomorphism (as an abstract group) but without providing the explicit description of the underlying field.  Finally, Barrios--Roy \cite{BR22} recently studied representations (including the inertial type) for elliptic curves with nontrivial odd torsion.

This proof has an interesting algorithmic consequence: to compute the type of an elliptic curve $E$ over $\Q_\ell$, it suffices to run Tate's algorithm on $E$ over an explicit finite list of extensions of $\Q_\ell$, see e.g.\ Corollary~\ref{cor:algQ3}.  For reliability, we have implemented this algorithm on over thousands of elliptic curves over $\Q$; the code is available online \cite{ourcode}.  Dokchitser--Dokchitser also implemented algorithms for working with the Galois representations attached to elliptic curves over local fields in \textsf{Magma} \cite{Magma} by reconstructing representations from their (good) Euler factors \cite{dd-crelle} and a tame local reciprocity formula of Newton \cite{Newton12}.  This implementation does not compute the inertia field (necessary to distinguish the types) and runs into difficulties exactly in the complicated inertial cases we consider here (e.g.\ when there are multiple faithful representations of the inertia group in dimension $2$).  

\subsection{Outline}

The paper is organized as follows.  In sections \ref{S:recTypes}--\ref{S:recCurves}, we establish background by briefly reviewing some facts concerning $2$-dimensional Weil--Deligne representations, inertial types, and elliptic curves.  In section \ref{sec:potmotred}, we compute types in the case of potentially multiplicative reduction for all primes $p$ and for additive, potentially good reduction for $p \geq 5$.  The remainder of the paper is concerned with additive, potentially good reduction first for $p=3$ (section \ref{sec:inert3}) then $p=2$ (sections \ref{sec:inert2nonexcept}--\ref{S:exceptionalThm} for the nonexceptional and exceptional cases).

\subsection{Acknowledgements}

The authors would like to thank Fred Diamond and Panagiotis Tsanknias for many instructive conversations.  Freitas was supported by the
European Union's Horizon 2020 research and innovation programme under the Marie Sk\l{l}odowska-Curie grant 
agreement No.\ 747808 and partly supported by the grant {\it Proyecto RSME-FBBVA $2015$ Jos\'e Luis Rubio de Francia.}
Voight was supported by an NSF CAREER Award (DMS-1151047) and a Simons Collaboration Grant (550029) and would like to thank the Henri Lebesgue Center for its hospitality during the conference \emph{$p$-adic Langlands Correspondence: a Constructive and Algorithmic Approach} in September 2019.

\section{Two-dimensional Weil--Deligne representations}
\label{S:recTypes}

In this section, we quickly recall background on Galois representations of local fields and types. 
Our main references are Tate \cite{Tate79}, Rohrlich \cite{roh94}, Carayol \cite{car86}, and Bushnell--Henniart \cite{bh06}.

\subsection{Notation}

A \defi{(complex) quasicharacter} of a topological group $G$ is a continuous homomorphism $\chi \colon G \to \C^\times$ with open kernel; if further $\left|\chi(g)\right|=1$ for all $g \in G$, we call $\chi$ a \defi{(unitary) character}.   If $\chi \colon G \to \C^\times$ is a (quasi)character and $\varphi \colon G \to G'$ is a continuous group homomorphism, we say that $\chi$ \defi{factors through} $\varphi$ if there exists a (quasi)character $\chi' \colon G' \to \C^\times$ such that $\chi = \chi' \circ \varphi$.  

\begin{remark} \label{rmk:Cp}
Throughout, one can equally well replace $\C$ with any algebraically closed field of characteristic $0$.  There is also a well-behaved theory of types over $\Q_\ell$ with $\ell \neq p$---and even one over $\Q_p$, with additional effort.  
\end{remark}

Let $p$ be prime and let $F \supseteq \Q_p$ be a finite extension with algebraic closure~$\Fbar$ and maximal unramified extension $F\spun \subset \Fbar$.  Let $\calO_F \subset F$ be the valuation ring of $F$ with maximal ideal $\fp$, uniformizer $\pi \in \fp$, and residue field $k$ of cardinality $q \colonequals \#k$.  Let $\vv \colon F^\times \to \Z$ denote the valuation of $F$ normalized with $\vv(\pi)=1$, and let $|{\cdot}|_v \colon F^\times \to \R_{>0}^\times$ be the associated normalized absolute value. 
Let $W_F < \Gal(\Fbar\,|\,F)$ be the Weil group of $F$ and $I_F < W_F$ its inertia subgroup, fitting into the exact sequence
\begin{equation} 
1 \to I_F \to W_F \to \Z \to 1. 
\end{equation}
 For $F = \Q_p$, for brevity we replace $F$ by $p$ in the subscript, e.g., writing $I_p < W_p$.  

Let $W_F^{\textup{ab}}$ denote the maximal abelian quotient of $W_F$ and let
$\Art_F \colon F^\times \xrightarrow{\sim} W_F^{\textup{ab}}$ be the \defi{Artin reciprocity map} from local class field theory, the isomorphism of topological groups 
sending $\pi \in \calO_F$ to the class of a  
\emph{geometric} Frobenius element $\Fr \in W_F^{\textup{ab}}$ characterized by $\Fr(x^q)=x$ for $x \in k$.  The map $\Art_F$ allows us to identify 
a (quasi)character $\chi$ of $W_F$ with the (quasi)character $\chi\spArt \colonequals \chi \circ \Art_F$ 
of $F^\times$, and conversely.  
The \defi{conductor} of $\chi$ is the ideal $\cond(\chi) \colonequals \fp^m$ where 
$\condexp(\chi) \colonequals m \in \Z_{\geq 0}$ is \defi{conductor exponent}, the smallest nonnegative integer such that the restriction $\chi\spArt|_{1+\fp^m}$ to $1+\fp^m \leq \calO_K^\times$ is trivial.  Let $\omega \colon W_F \to \C^\times$ be the quasicharacter corresponding to the norm quasicharacter~$|{\cdot}|_v$, so that~$\omega(g)=q^{-a}$ for $g|_{F\spun}=\Fr^{a}$ with $a \in \Z$.

\begin{definition} \label{defn:WDRep}
A ($n$-dimensional) \defi{Weil--Deligne representation} is a pair $(\rho,N)$ such that:
\begin{enumroman}
 \item $\rho \colon W_F \to \GL_n(\C)$ is a homomorphism with open kernel; and
 \item $N \in \GL_n(\C)$ is nilpotent and satisfies
 \begin{equation}
  \rho(g)N\rho(g)^{-1} = \omega(g)N \quad \text{ for all } g \in W_F.
 \end{equation}
\end{enumroman}
An \defi{isomorphism} (or \defi{equivalence}) of Weil--Deligne representations from $(\rho,N)$ to $(\rho',N')$ is specified by an element $P \in \GL_n(\C)$ such that $\rho'(g)=P\rho(g)P^{-1}$ for all $g \in W_F$ and $N'=PNP^{-1}$.
\end{definition}

\begin{remark}
The nilpotent element $N$ comes from the fact that there exists an open subgroup $H \leq I_F$ such that $\rho|_H$ is unipotent; below, we will usually have $N=0$.
\end{remark}

\subsection{Classification}

Every $2$-dimensional Weil--Deligne representation arises up to isomorphism from one of the following three possibilities.  

\begin{itemize}
\item \emph{Principal series}.  Let $\chi_1, \chi_2 \colon W_F \to \C^\times$ be quasicharacters such that $\chi_1 \chi_2^{-1} \ne~\omega^{\pm 1}$.
The \defi{principal series} representation associated to~$\chi_1, \chi_2$ is~$(\PS(\chi_1, \chi_2), 0)$, where 
\[ \PS(\chi_1,\chi_2) \colonequals \chi_1 \oplus \chi_2. \]
Its conductor exponent is given by 
\begin{equation} \label{E:condPS}
 \condexp(\PS(\chi_1, \chi_2)) = \condexp(\chi_1) + \condexp(\chi_2).
\end{equation}

\item \emph{Special or Steinberg representations}.  Let $\chi \colon W_F \to \C^\times$ be a quasicharacter.   
The \defi{special} or \defi{Steinberg} representation associated to $\chi$ is~$(\Sp(\chi), N)$, where 
\begin{equation} \label{eqn:Spchi}
 \Sp(\chi) \colonequals \chi \omega \oplus \chi \quad \text{ and } \quad
 N = \begin{pmatrix} 0 & 1 \\ 0 & 0 \end{pmatrix}.
\end{equation}
We have
\begin{equation}\label{E:condSS}
\condexp(\Sp(\chi)) =
\begin{cases}
2\condexp(\chi), & \text{ if $\chi$ is ramified;} \\
1, & \text{ otherwise.}
\end{cases} 
\end{equation}

\item \emph{Supercuspidal representations}. The  Weil--Deligne representations $(\rho, 0)$ where $\rho$ is an \emph{irreducible} $2$-dimensional representation of $W_F$ are called \defi{supercuspidal}.  Supercuspidal representations are classified by their projective images in $\PGL_2(\C)$ (see Bushnell--Henniart \cite[sections 41 and 42]{bh06} or Carayol \cite[section 12]{car86}).  We say that
$\rho$ is \defi{nonexceptional} (or \defi{imprimitive}) if its projective image is dihedral, otherwise $\rho$ is \defi{exceptional} (or \defi{primitive}) and has projective image $A_4$ or $S_4$.  (Since $W_F$ is totally disconnected, the projective image $A_5$ cannot occur.)  
\end{itemize}

\subsection{Nonexceptional supercuspidal representations}

Since they will command significant attention here, we explore supercuspidal representations further.  We begin with the nonexceptional supercuspidal representations.

Let $\Mquad \supset F$ be a quadratic extension and let $\psi_K \colon W_F \to \{\pm 1\}$ be the quadratic character of $W_F$ with kernel $W_K$.  Let $\chi \colon W_{\Mquad} \rightarrow~\C^\times$ 
be a quasicharacter and consider the associated quasicharacter $\chi\spArt \colon K^\times \to \C^\times$.  
Let $s \in W_F$ be a lift of the nontrivial element in $\Gal(\Mquad\,|\,F)$.  Since $W_K \trianglelefteq W_F$ is normal (with $s$ representing the nontrivial coset), the \defi{$s$-conjugate} of $\chi$ 
\begin{equation} \label{eqn:chisconj}
\begin{aligned}
\chi^s \colon W_{\Mquad} &\rightarrow \C^\times \\ 
\chi^s(g) &= \chi(s^{-1}gs), 
\end{aligned}
\end{equation}
is independent of the choice of $s$.  By local class field theory, we have $(\chi^s)\spart=\chi\spart \circ s$.

\begin{lemma} \label{L:factorvianorm2}
The following are equivalent:
\begin{enumroman}
\item $\chi^s=\chi$;
\item $s(x)/x \in \ker \chi\spArt$ for all $x \in K^\times$; and
\item $\chi\spArt$ factors through the norm map $\Nm_{K|F} \colon K^\times \to F^\times$.
% \item $\chi\spArt(s(\pi)/\pi) = 1$ and $\chi\spArt|_{\calO_K^\times}$ factors through $(\Nm_{K|F})|_{\calO_K^\times} \colon \calO_K^\times \to \calO_F^\times$.
\end{enumroman}
\end{lemma}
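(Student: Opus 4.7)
The plan is to prove the cycle \textup{(i)} $\Leftrightarrow$ \textup{(ii)} $\Leftrightarrow$ \textup{(iii)} by combining the compatibility $(\chi^s)\spart = \chi\spart \circ s$ already noted in the text with Hilbert 90 for the cyclic extension $K\,|\,F$. The statement is essentially a character-theoretic reformulation of Hilbert 90, translated through the Artin reciprocity map.

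For \textup{(i)} $\Leftrightarrow$ \textup{(ii)}, I would simply unwind definitions. By the identity $(\chi^s)\spart = \chi\spart \circ s$, the equality $\chi^s = \chi$ of quasicharacters of $W_K$ is equivalent, via the Artin reciprocity map $\Art_K \colon K^\times \xrightarrow{\sim} W_K^{\textup{ab}}$, to $\chi\spart(s(x)) = \chi\spart(x)$ for all $x \in K^\times$. Since $\chi\spart$ is a homomorphism, this says $\chi\spart(s(x)/x) = 1$ for all $x$, which is exactly \textup{(ii)}.

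For \textup{(ii)} $\Leftrightarrow$ \textup{(iii)}, the key input is Hilbert's Theorem 90 applied to the cyclic group $\Gal(K\,|\,F) = \langle s \rangle$ of order $2$, which gives
\[ \ker\bigl(\Nm_{K|F} \colon K^\times \to F^\times\bigr) = \{\, s(x)/x : x \in K^\times \,\}. \]
Thus \textup{(ii)} is equivalent to saying $\chi\spart$ vanishes on $\ker \Nm_{K|F}$. The implication \textup{(iii)} $\Rightarrow$ \textup{(ii)} is then immediate, since any character factoring through $\Nm_{K|F}$ is trivial on the kernel. For the converse, triviality on $\ker \Nm_{K|F}$ lets one define a quasicharacter $\chi'$ on the image $\Nm_{K|F}(K^\times) \subseteq F^\times$ by $\chi'(\Nm_{K|F}(x)) \colonequals \chi\spart(x)$; by local class field theory this image has index $2$ in $F^\times$, so $\chi'$ extends to a quasicharacter of $F^\times$ (in fact, in two ways), yielding \textup{(iii)}.

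There is no real obstacle; the only point to be mindful of is whether \textup{(iii)} is asking for a factorization as characters of $F^\times$ or merely as a well-defined function on the image of the norm. The mild subtlety is that the extension from $\Nm_{K|F}(K^\times)$ to all of $F^\times$ is not unique, but existence is automatic since the quotient $F^\times / \Nm_{K|F}(K^\times)$ has order $2$ and any character of a subgroup of a locally compact abelian group extends. This is where one invokes local class field theory in its standard form.
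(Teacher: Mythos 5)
Your argument is correct and follows essentially the same route as the paper's proof: translate through the Artin reciprocity map and the identity $(\chi^s)\spart = \chi\spart \circ s$ to reduce (i) $\Leftrightarrow$ (ii) to a formal manipulation, then invoke Hilbert's Theorem 90 to identify $\ker \Nm_{K|F}$ with $\{s(x)/x\}$, and finally extend the induced quasicharacter from the open finite-index subgroup $\Nm_{K|F}(K^\times)$ to all of $F^\times$. The only cosmetic difference is that you cite the index being exactly $2$ where the paper only needs, and only uses, that the image of the norm is open of finite index.
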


Of course, since $K^\times$ is generated by $\pi$ (for any choice of uniformizer $\pi$) and $\calO_K^\times$, it is enough to check (ii) for $x=\pi$ and all $x \in \calO_K^\times$.  

\begin{proof} 
For (i) $\Leftrightarrow$ (ii), we observe
\begin{equation} \label{eqn:chischi}
\begin{aligned}
\chi^s = \chi &\quad \Leftrightarrow \quad (\chi^s)\spArt(x) = \chi\spArt(x) \quad \text{ for all $x \in K^\times$}\\
&\quad \Leftrightarrow \quad \chi\spArt(s(x)/x) = 1 \quad \text{ for all $x \in K^\times$}\\
&\quad \Leftrightarrow \quad \{s(x)/x : x \in K^\times\} \leq \ker \chi\spArt.\\
\end{aligned}
\end{equation}
For (ii) $\Leftrightarrow$ (iii), we recall that $\ker(\Nm_{K|F}) = \{s(x)/x : x \in K^\times\}$ by Hilbert's Theorem 90, so (ii) holds if and only if $\chi\spArt$ factors through the surjective norm map $\Nm_{K|F} \colon K^\times \to \Nm_{K|F}(K^\times)$.  But of course $\Nm_{K|F}(K^\times) \leq F^\times$ is open of finite index, so $\theta'$
extends to a quasicharacter $\theta \colon F^\times \to \C^\times$, and we see that (ii) $\Leftrightarrow$ (iii).
\end{proof}

We record the following consequences.

\begin{corollary}\label{cor:factorvianorm}
The following statements hold.
\begin{enumalph}
\item If $\chi\spArt(-1) = -1$ then~$\chi\spArt$ does not factor via the norm map.
\item Suppose $\calO_F^\times \leq \ker \chi\spArt$.  Then~$\chi\spArt$ factors through the norm map if and only if $\chi\spArt(s(\pi)/\pi) = 1$ and $\chi\spArt|_{\calO_K^\times}$ is quadratic.
% In particular, if $\chi\spArt|_{\calO_F^\times} = 1$ and $\chi\spArt|_{\calO_K^\times}$ is not quadratic, then $\chi\spArt$ does not factor via the norm map.
\item The character $\left(\chi^s /\chi\right)\spArt$ factors through the norm map if and only if $\chi^s /\chi$ is quadratic. 
\end{enumalph}
\end{corollary}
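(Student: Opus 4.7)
The plan is to derive all three parts as direct consequences of Lemma~\ref{L:factorvianorm2} and the remark immediately following it (that factoring through the norm can be checked on the generators $\pi$ and $\calO_K^\times$ of $K^\times$). I would organize the three parts in order, since (b) provides the main bookkeeping and (c) is a short formal consequence once $s$-conjugation has been understood.

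For (a), I would just observe that $-1 \in K^\times$ has norm $\Nm_{K|F}(-1) = (-1)\cdot s(-1) = 1$. Hence if $\chi\spArt = \theta \circ \Nm_{K|F}$ for some quasicharacter $\theta$ of $F^\times$, then $\chi\spArt(-1) = \theta(1) = 1$, contradicting the hypothesis.

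For (b), I would use criterion (ii) of Lemma~\ref{L:factorvianorm2} and check it on generators. At $\pi$ this is precisely the hypothesis $\chi\spArt(s(\pi)/\pi)=1$. For $u \in \calO_K^\times$ the key computation is
\[ \chi\spArt(u \cdot s(u)) \;=\; \chi\spArt(\Nm_{K|F}(u)) \;=\; 1, \]
where the second equality uses $\Nm_{K|F}(u) \in \calO_F^\times \leq \ker\chi\spArt$. Rearranging gives $\chi\spArt(s(u)) = \chi\spArt(u)^{-1}$. The condition $\chi\spArt(s(u)/u)=1$ is therefore equivalent to $\chi\spArt(u) = \chi\spArt(u)^{-1}$, i.e.\ to $\chi\spArt|_{\calO_K^\times}$ being quadratic. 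For the converse direction I would run the same identity in reverse: quadratic on $\calO_K^\times$ gives $\chi\spArt(s(u)/u)=1$ for units, and the uniformizer condition is assumed. The main obstacle, such as it is, lies in carefully arranging this one-line identity; once it is in hand both implications fall out symmetrically.

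For (c), I would set $\phi \colonequals \chi^s/\chi$ and show $\phi^s = \phi^{-1}$. This follows from the formula $\chi^s(g) = \chi(s^{-1}gs)$: iterating gives $(\chi^s)^s(g) = \chi(s^{-2}gs^2)$, and since $s^2 \in W_K$ acts trivially by inner automorphism on the abelian quotient $W_K^{\ab}$ through which $\chi$ factors, we obtain $(\chi^s)^s = \chi$. Hence $\phi^s = (\chi^s)^s/\chi^s = \chi/\chi^s = \phi^{-1}$. Applying Lemma~\ref{L:factorvianorm2} (i)~$\Leftrightarrow$~(iii) to $\phi$, we see $\phi\spArt$ factors through $\Nm_{K|F}$ if and only if $\phi^s = \phi$, which by the above is if and only if $\phi = \phi^{-1}$, i.e.\ $\phi$ is quadratic.
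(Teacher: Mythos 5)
Your proposal is correct and follows essentially the same route as the paper: part (a) by evaluating the norm at $-1$, part (b) by applying Lemma~\ref{L:factorvianorm2}(ii)$\Leftrightarrow$(iii) and checking on the generators $\pi$ and $\calO_K^\times$ via the identity $\chi\spArt(s(u)/u)=\chi\spArt(u)^{-2}$, and part (c) by applying (i)$\Leftrightarrow$(iii) to $\chi^s/\chi$ together with $(\chi^s/\chi)^s=(\chi^s/\chi)^{-1}$. The only cosmetic difference is that you spell out why $(\chi^s)^s=\chi$ (via $s^2\in W_K$ acting trivially on $W_K^{\ab}$), which the paper leaves implicit.
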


\begin{proof} 
For (a), if $\chi\spArt  = \theta \circ \Nm_{K|F}$ then $\chi\spArt(-1)=\theta((-1)^2)=1$.
For (b), we apply the equivalence (ii) $\Leftrightarrow$ (iii) of Lemma \ref{L:factorvianorm2}.  For $u \in \calO_K^\times$, by hypothesis we have
\[ \chi\spArt(s(u)/u) = \chi\spArt(\Nm_{K|F}(u)/u^2)= 1/\chi\spArt(u)^2 \]
so $s(u)/u \in \ker \chi\spArt$ if and only if $\chi\spArt(u)^2=1$.  Thus $s(x)/x \in \ker \chi\spArt$ for all $x \in K^\times$ if and only if $\chi\spArt(s(\pi)/\pi)=1$ and $(\chi\spArt|_{\calO_K^\times})^2$ is trivial, i.e., $\chi\spArt|_{\calO_K^\times}$ is quadratic.  

For (c), we apply the equivalence (i) $\Leftrightarrow$ (iii): so $\left(\chi^s /\chi\right)\spArt$ factors through the norm map if and only if $(\chi^s/\chi)^s = \chi^s/\chi$.  But $(\chi^s/\chi)^s = \chi/\chi^s = (\chi^s/\chi)^{-1}$, so the condition holds if and only if $\chi^s/\chi=(\chi^s/\chi)^{-1}$ if and only if $\chi^s/\chi$ is quadratic.
\end{proof}

Suppose that $\chi \neq \chi^s$. Then the \defi{nonexceptional supercuspidal} representation attached to $\chi$ is~$(\Ind_{W_{\Mquad}}^{W_F}\chi, 0)$, the induction of $\chi$ from $W_{\Mquad}$ to $W_F$ (with $N=0$).  
(The condition $\chi \neq \chi^s$ is necessary to ensure that $\BCKF \chi$ is irreducible.)
Recalling that $\psi_K$ is the quadratic character of $W_F$ corresponding to $K$, we have
\begin{equation} \label{E:condBC}
\condexp(\BCKF \chi) = \begin{cases}
		      2\condexp(\chi),  & \text{ if ${\Mquad}\,|\,F$ is unramified;}\\
                      \condexp(\chi) + \condexp(\psi_K),  & \text{ if $\Mquad\,|\,F$ is ramified. }\                                                         
                  \end{cases}
\end{equation}
% In fact, the first case occurs if and only if $\condexp(\BCKF \chi)$ is even and the second case occurs if and only if $\condexp(\BCKF \chi)$ is odd: see Bushnell--Henniart \cite[Chapter 4, \S 15]{bh14}.  \jv{Precise citation please!}\nuno{Is this claim correct? The types of conductor $2^5$, according to the table, are induced from quadratic ramified extensions, seems like a counter example.} \jv{What is the precise reference then?  We use a part of this claim; but not the obvious bit (the top part is even of course), so what is true?}

We conclude with a few simple lemmas.  

\begin{lemma} \label{lem:order4max}
If $\chi^s|_{I_K} = \chi^{-1}|_{I_K}$, then $(\chi^s/\chi)|_{I_K}$ has order dividing $2$ if and only if~$\chi|_{I_K}$ has order dividing~$4$.
\end{lemma}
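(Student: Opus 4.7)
The plan is to use the hypothesis to rewrite $(\chi^s/\chi)|_{I_K}$ directly as a power of $\chi|_{I_K}$, reducing the equivalence to an elementary arithmetic statement about the order of a single character. Since $\chi^s|_{I_K} = \chi^{-1}|_{I_K}$, we have
\[
(\chi^s/\chi)|_{I_K} \;=\; (\chi^{-1}/\chi)|_{I_K} \;=\; \chi^{-2}|_{I_K}.
\]
From here the equivalence becomes transparent: $\chi^{-2}|_{I_K}$ has order dividing $2$ if and only if $\chi^{-4}|_{I_K}$ is trivial, which happens if and only if $\chi|_{I_K}$ has order dividing $4$.

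The only minor thing to note in the writeup is that the quantities being compared are all values in $\C^\times$, so raising and inverting characters is unambiguous, and $\chi^{-2}|_{I_K}$ makes sense as a continuous quasicharacter of $I_K$. There is no real obstacle here; the lemma is essentially a one-line observation once the hypothesis is applied. I would write the proof as a short chain of equivalences as above, with no need for Lemma~\ref{L:factorvianorm2} or Corollary~\ref{cor:factorvianorm}.
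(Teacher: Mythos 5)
Your proof is correct and is essentially identical to the paper's: the paper's entire proof is the one-line observation $(\chi^s/\chi)|_{I_K} = \chi^{-2}|_{I_K}$, and the rest is the same elementary order computation you spelled out.
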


\begin{proof} 
Indeed, we have $(\chi^s/\chi)|_{I_K} = \chi^{-2}|_{I_K}$.
\end{proof}
 
\begin{lemma} \label{lem:centralChar}
Let $\delta \colonequals \det(\Ind_{W_{\Mquad}}^{W_F}\chi)$.  Then 
\[ \delta\spArt = \psi_K\spArt \chi\spArt|_{F^\times}. \]
In particular, if $\delta=\omega$ then $\psi_K\spArt \chi\spArt|_{F^\times} = |\,|_v$ and 
\[ \chi\spArt|_{\calO_F^\times} = \psi_K\spArt|_{\calO_F^\times} \quad \text{and} \quad \chi^s|_{I_K} = \chi^{-1}|_{I_K}. \]
\end{lemma}

\begin{proof}
We have \cite[\S 29.2]{bh06} (with $d=2$, $m=1$)
\begin{equation} \label{eqn:detpsik}
\delta \colonequals \det(\Ind_{W_{\Mquad}}^{W_F}\chi) = \psi_K \cdot (\chi \circ \ver_{W_F|W_K}) 
\end{equation}
where $\ver_{W_F|W_K}$ is the transfer (or Verlagerung) map, fitting in the fundamental commutative diagram from local class field theory:
\begin{equation}
\begin{aligned}
\xymatrix{
F^\times \ar[r] \ar@{^{(}->}[d] & W_F^{\textup{ab}} \ar[d]^{\ver_{W_F|W_K}} \\
K^\times \ar[r] & W_K^{\textup{ab}}
} 
\end{aligned}
\end{equation}
(The equality \eqref{eqn:detpsik} can also be verified in this case by a direct calculation.)  Thus 
\begin{equation}
\delta\spArt = \psi_K\spArt \chi\spArt|_{F^\times}.
\end{equation}

In particular, we have $\delta = \omega$ if and only if $\delta\spArt = \psi_K\spArt \chi\spArt|_{F^\times} = |\,|_v$.  When $\delta=\omega$, since~$\psi_K$ is quadratic we have $\chi\spArt|_{\calO_F^\times}=(\psi_K^{-1})\spArt|_{\calO_F^\times} = \psi_K\spArt|_{\calO_F^\times}$; and as in Lemma \ref{L:factorvianorm2} we have $(\chi^s \chi)\spArt = \chi\spart \circ \Nm_{K|F}$ so restricting to $\calO_K^\times$ gives $\chi^s|_{I_K}=\chi^{-1}|_{I_K}$.
\end{proof}

\subsection*{Exceptional supercuspidal representations} \label{S:exceptional} 
The remaining supercuspidal representations are exceptional (primitive).  A supercuspidal Weil-Deligne representation $\rho\colon W_F \to \GL_2(\C)$ is called \defi{exceptional} or \defi{primitive} if the image
of the projective representation $\mathrm{P}\rho$ is isomorphic to $A_4$ or $S_4$. Exceptional representations only exist in residual characteristic $p=2$, so suppose that $p=2$.  
We will also use the following characterization.  For $\rho$ supercuspidal, let $\mathfrak{I}(\rho)$ be the group of cha\-rac\-ters $\xi \colon W_F \to \C^\times$ such that $\rho \otimes \xi \simeq \rho$.  
 
\begin{proposition}\label{prop:primitive} 
Let $\rho$ be a supercuspidal Weil--Deligne representation.  Then, $\rho$ is exceptional if and only if $\#\mathfrak{I}(\rho) \in \{1,2,4\}$.
\end{proposition}

\begin{proof}
See Bushnell--Henniart \cite[\S 41.3]{bh06}.
\end{proof}

This characterization also refines the imprimitive case, as follows.  We write $D_n$ for the dihedral group of order $2n$ and $C_n$ for the cyclic group of order $n$.

\begin{proposition}\label{prop:triply-imprimitive} 
Let $\rho = \Ind_{W_K}^{W_F}\chi$ be an imprimitive representation, 
where $K \supset F$ is a quadratic extension and $\chi \colon W_K \to \C^\times$ a character such that $\chi^s \neq \chi$.  Then the following statements hold:
\begin{enumalph}
\item $\#\mathfrak{I}(\rho) = 2$ if and only if $\rho$ is induced from a \emph{unique} quadratic extension $K \supset F$.
\item $\#\mathfrak{I}(\rho) = 4$ if and only if $\rho$ has projective image isomorphic to $D_2 \simeq C_2 \times C_2$ and can be induced from three distinct quadratic extensions if and only if $\chi^s/\chi$ factors through the norm map.
\end{enumalph}
\end{proposition}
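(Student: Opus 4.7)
The plan is to translate the condition $\rho \otimes \xi \simeq \rho$ into a statement about restrictions of characters $W_F \to W_K$, and then to relate $\mathfrak{I}(\rho)$ both to the set of quadratic extensions from which $\rho$ is induced and to the projective image of $\rho$. The starting point is the projection formula $\rho \otimes \xi \simeq \Ind_{W_K}^{W_F}(\chi \cdot \xi|_{W_K})$ combined with the fact (from Mackey's formula) that, because $\chi \neq \chi^s$, $\Ind_{W_K}^{W_F}\chi_1 \simeq \Ind_{W_K}^{W_F}\chi_2$ iff $\chi_2 \in \{\chi_1, \chi_1^s\}$. Consequently $\xi \in \mathfrak{I}(\rho)$ iff $\xi|_{W_K} \in \{1, \chi^s/\chi\}$. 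The kernel of the restriction $\Hom(W_F, \C^\times) \to \Hom(W_K, \C^\times)$ is the group of characters factoring through $W_F/W_K \simeq \Gal(K|F)$, namely $\{1, \psi_K\}$, and a character $\theta$ of $W_K$ extends to $W_F$ iff $\theta^s = \theta$ (in which case there are exactly two extensions). For $\theta = \chi^s/\chi$ we have $\theta^s = \chi/\chi^s = \theta^{-1}$, so $\theta^s = \theta$ is equivalent to $\theta^2 = 1$, i.e., to $\chi^s/\chi$ factoring through $\Nm_{K|F}$ by Corollary \ref{cor:factorvianorm}(c). Hence $\#\mathfrak{I}(\rho) \in \{2, 4\}$, with value $4$ precisely when $\chi^s/\chi$ factors through the norm, proving (c).

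For (a) and the counting in (b), note that $\mathfrak{I}(\rho)$ is an elementary abelian $2$-group: $\rho \otimes \xi \simeq \rho$ implies $\det \rho = \xi^2 \det \rho$ and hence $\xi^2 = 1$. Thus every nontrivial element is $\psi_{K'}$ for a unique quadratic extension $K'\,|\,F$. I claim that $\psi_{K'} \in \mathfrak{I}(\rho)$ iff $\rho$ is induced from $W_{K'}$. The reverse implication is immediate from the projection formula. For the forward implication, the projection formula gives
\[ \Ind_{W_{K'}}^{W_F}\bigl(\rho|_{W_{K'}}\bigr) \simeq \rho \otimes \Ind_{W_{K'}}^{W_F}(1) \simeq \rho \oplus (\rho \otimes \psi_{K'}) \simeq \rho \oplus \rho, \]
so Frobenius reciprocity yields $\dim\End_{W_{K'}}(\rho|_{W_{K'}}) = 2$, forcing $\rho|_{W_{K'}}$ to decompose as a sum of two characters $\chi' \oplus \chi''$; a further application of Frobenius reciprocity together with irreducibility of $\rho$ then gives $\rho \simeq \Ind_{W_{K'}}^{W_F}(\chi')$. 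The number of quadratic extensions from which $\rho$ is induced is thus $\#\mathfrak{I}(\rho) - 1$, establishing (a) and the inducibility count in (b).

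It remains to identify the projective image. The restriction $\rho|_{W_K} \simeq \chi \oplus \chi^s$ is diagonal, so its image in $\PGL_2(\C)$ is cyclic of order $n \colonequals \ord(\chi^s/\chi)$; any lift $s$ of the nontrivial element of $\Gal(K|F)$ is represented by an anti-diagonal matrix in $\GL_2$ (it swaps the two eigenlines), giving a nontrivial class in $\bar\rho(W_F)/\bar\rho(W_K)$, so $\bar\rho(W_F)$ is dihedral of order $2n$. Hence $\bar\rho(W_F) \simeq D_2 \simeq C_2 \times C_2$ iff $n = 2$, iff $\chi^s/\chi$ is (nontrivial and) quadratic, iff $\chi^s/\chi$ factors through $\Nm_{K|F}$ (by Corollary \ref{cor:factorvianorm}(c)), iff $\#\mathfrak{I}(\rho) = 4$ by (c), completing (b). The main conceptual point is showing that each quadratic character in $\mathfrak{I}(\rho)$ corresponds to an inducing quadratic extension; everything else is formal manipulation of induced representations together with local class field theory.
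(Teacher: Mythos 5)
Your argument is correct. The paper does not supply a proof of this proposition at all: it simply cites Bushnell--Henniart \cite[\S 41.3, Corollary]{bh06} and G\'erardin \cite[Section~2.7]{ger78}. So you have produced a genuinely self-contained derivation where the authors defer to references. Your route is the natural one: the projection formula plus Mackey's criterion reduces $\xi\in\mathfrak{I}(\rho)$ to $\xi|_{W_K}\in\{1,\chi^s/\chi\}$; counting preimages under restriction of characters $W_F\to W_K$ (kernel $\{1,\psi_K\}$, with $\chi^s/\chi$ extending exactly when $(\chi^s/\chi)^s=\chi^s/\chi$, i.e.\ when it is quadratic, which Corollary~\ref{cor:factorvianorm}(c) identifies with factoring through the norm) gives $\#\mathfrak{I}(\rho)\in\{2,4\}$ and part (c); the equivalence between nontrivial $\psi_{K'}\in\mathfrak{I}(\rho)$ and $\rho$ being induced from $K'$, via $\Ind_{W_{K'}}^{W_F}\bigl(\rho|_{W_{K'}}\bigr)\simeq\rho\oplus(\rho\otimes\psi_{K'})$ and Frobenius reciprocity, gives (a) and the induction count in (b); and the dihedral-of-order-$2n$ computation of the projective image (with $n=\mathrm{ord}(\chi^s/\chi)\ge 2$) closes the loop. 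One minor point worth making explicit: the finiteness of $n$ is needed for the dihedral statement, and it does hold because $(\chi^s/\chi)\spArt(x)=\chi\spArt(s(x)/x)$ with $s(x)/x$ ranging over the compact norm-one subgroup of $K^\times$, on which $\chi\spArt$ has finite image; but this is easily supplied and does not affect the correctness of your argument.
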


\begin{proof} 
See Bushnell--Henniart \cite[\S 41.3, Corollary]{bh06} and G\'erardin \cite[Section~2.7]{ger78}.
\end{proof}

Following Propositions \ref{prop:primitive} and \ref{prop:triply-imprimitive}, we say that a supercuspidal representation $\rho$ is \defi{primitive}, \defi{simply imprimitive}, or \defi{triply imprimitive} according as $\#\mathfrak{I}(\rho)=1,2,4$.

\begin{proposition}\label{prop:triply-imprimitive:2} Let $\rho$ be a primitive representation.  
\begin{enumalph}
\item There exists a cubic extension $L\,|\,F$ such that $\rho|_{W_L}$ is imprimitive.
\item If $L\,|\,F$ is cubic Galois, then the representation $\rho|_{W_L}$ is triply imprimitive.
\item If $L\,|\,F$ is cubic non Galois, let $M\,|\,F$ be the normal closure of $L\,|\,F$ and $E\,|\,F$ the maximal unramified sub-extension of $M\,|\,F$. 
Then, the representation $\rho|_{W_L}$ is simply imprimitive, $\rho|_{W_M}$ is triply imprimitive, and $\rho|_{W_E}$ is primitive.
\end{enumalph}
\end{proposition}

\begin{proof} See Bushnell-Henniart~\cite[\S 42.2, Theorem, p.\ 258]{bh06}.
\end{proof}

Let $L \supseteq F$ be a (tamely) ramified cubic extension, and let $M \supset L$ be a ramified quadratic extension.  (The condition that $M$ is ramified over $L$ is indeed necessary \cite[\S 42.1, Proposition, p.\ 257, part (1)]{bh06}.)  Let  $\chi$ be a character of $W_M$ such that $\chi\spArt$ does not factor through the norm map $\Nm_{M|L}$.
Given the data $(L,M,\chi)$, by Bushnell--Henniart \cite[p.~261]{bh06} there is a \defi{exceptional supercuspidal} Weil--Deligne representation $(\rho,0)$ such that 
\begin{equation}
 \rho|_{W_L} = \Ind_{W_M}^{W_L}\chi.
 \end{equation}
 Conversely, every exceptional supercuspidal representation is uniquely determined by such a triple $(L,M,\chi)$, up to equivalence \cite[Lemme 12.1.3]{car86}.

\subsection{Inertial types}\label{S:inertialTypes} 
We now study the restriction to inertia.  An \defi{inertial Weil--Deligne (or $\WD$-)type} is an equivalence class $[\rho,N]$ of Weil--Deligne representations~$(\rho, N)$ under the equivalence relation $(\rho, N) \sim (\rho', N')$ if and only if there exists $P \in \GL_2(\C)$ such that $\rho'(g)=P\rho(g)P^{-1}$ and $N'=PNP^{-1}$ for all $g \in I_F$.  The content is in the restriction to $g \in I_F$; we might think of this as being an equivalence of Weil--Deligne representations over $F\spun$.  Such an equivalence class is determined by the pair $(\tau,N)$ where  $\tau = \rho|_{I_F}$ is the (common) restriction to $I_F$ for a WD-type, with the evident notion of equivalence,
so this definition agrees with the one given in the introduction.  Except for the special (Steinberg) representations we have $N = 0$, so (aside from section~\ref{S:multiplicative}) we drop $N$ from the notation and write simply $\tau$.

We record the following classification of all inertial WD-types. 

\begin{proposition}\label{prop:non-prim-inert-type} 
Let $\tau\colon I_F \to \GL_2(\C)$ be an inertial $\WD$-type. Then exactly one of the following holds:
\begin{enumroman}
\item $\tau$ is the restriction of a principal series, i.e., there exist $\chi_1, \chi_2\colon W_F \to \C^\times$ such that
$$\tau \simeq \PS(\chi_1, \chi_2)|_{I_F} =
\chi_1|_{I_F} \oplus \chi_2|_{I_F};$$
\item $\tau \simeq \Sp(\chi)|_{I_F}$ is the restriction of a special series for $\chi$ a character of $W_F$; 
\item There exists a character $\chi\colon W_{\Mquad} \to \C^\times$, where ${\Mquad} \supset F$ is the unramified quadratic extension, such that $\chi \ne \chi^s$ and 
$$\tau \simeq (\Ind_{W_{\Mquad}}^{W_{F}}\chi)|_{I_F} = \chi|_{I_F} \oplus \chi^s|_{I_F};$$
\item There exist a ramified quadratic extension ${\Mquad} \supseteq F$ and a character $\chi\colon W_{\Mquad} \to  \C^\times$ such that $\chi|_{I_K} \ne \chi^s|_{I_K}$ and
$$\tau \simeq  \Ind_{I_{\Mquad}}^{I_{F}}(\chi|_{I_K})$$
is irreducible; or
\item $\tau$ is the restriction of an exceptional supercuspidal Weil--Deligne representation.
\end{enumroman}
\end{proposition}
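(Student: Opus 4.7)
The plan is to apply the classification of 2-dimensional Weil--Deligne representations recalled in section~\ref{S:recTypes} to a WD-representation $(\rho, N)$ whose inertial restriction is $\tau$, and to track what happens upon restriction to $I_F$. Every 2-dimensional WD-rep is (up to equivalence) either principal series, special, or supercuspidal, and supercuspidals further split into nonexceptional (induced from a quadratic extension) and exceptional; five cases emerge, matching (i)--(v) in the proposition.

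The easy cases fall out immediately. If $(\rho,N)=(\PS(\chi_1,\chi_2),0)$ is principal series, then $\rho = \chi_1 \oplus \chi_2$ and restriction to $I_F$ gives (i). If $(\rho,N)=(\Sp(\chi),N)$ is special, then restriction gives (ii), and this is the unique case with $N \neq 0$. If $\rho$ is an exceptional supercuspidal, we land in case (v) by definition. The main computational step is the nonexceptional supercuspidal case, where $\rho = \Ind_{W_K}^{W_F}\chi$ for some quadratic extension $K\,|\,F$ and a character $\chi\colon W_K\to\C^\times$ with $\chi\neq\chi^s$. I would compute $\rho|_{I_F}$ using Mackey's restriction-induction formula for the pair $I_F \le W_F$ and the normal subgroup $W_K \trianglelefteq W_F$. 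When $K\,|\,F$ is unramified, $I_K = I_F$, so $W_K \cdot I_F = W_K$ and the double-coset decomposition of Mackey yields $\rho|_{I_F} = \chi|_{I_F}\oplus\chi^s|_{I_F}$, giving (iii). When $K\,|\,F$ is ramified, $I_K \trianglelefteq I_F$ with $[I_F:I_K]=2$ and $W_K\cdot I_F = W_F$, so Mackey collapses to a single double coset and yields $\rho|_{I_F}\simeq\Ind_{I_K}^{I_F}(\chi|_{I_K})$, giving (iv). The condition $\chi|_{I_K}\neq\chi^s|_{I_K}$ in (iv) needs a separate check: otherwise the induced representation $\Ind_{I_K}^{I_F}(\chi|_{I_K})$ would be reducible and $\rho|_{I_F}$ would split into a sum of characters, contradicting the supercuspidal origin when combined with the original hypothesis $\chi\neq\chi^s$ on $W_K$.

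For the ``exactly one'' claim, I would stratify by invariants: (ii) is the unique case with $N \neq 0$; cases (i) and (iii) produce reducible inertial types, while (iv) and (v) produce irreducible ones. Within the reducible group, (iii) is distinguished from (i) by the fact that it arises from a genuinely supercuspidal (hence $W_F$-irreducible) WD-rep, which imposes the $s$-conjugation relation from an honest unramified quadratic extension. Within the irreducible group, (iv) has dihedral projective inertial image (induced from an index-two subgroup), while (v) has projective image $A_4$ or $S_4$, by the Bushnell--Henniart classification invoked in section~\ref{S:exceptional}. The main obstacle will be this last mutual-exclusivity step, especially distinguishing (i) from (iii): at the level of $I_F$ alone both are sums of two characters, and since $W_F/I_F$ is free any character of $I_F$ extends to $W_F$, so the distinction must be packaged into the structure of the underlying WD-representation rather than read off directly from $\tau$ as an abstract $I_F$-representation. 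The cleanest way to handle this is to fix a representative WD-rep $(\rho,N)$ for each equivalence class of inertial types and check case-by-case that it lies in a single class of the WD-classification.
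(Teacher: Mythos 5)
The paper's own ``proof'' of this proposition is simply a citation to Breuil--M\'ezard and Bushnell--Henniart, so your attempt to actually derive it from the classification of $2$-dimensional Weil--Deligne representations plus Mackey restriction is a genuinely different route; the Mackey decomposition in the unramified and ramified cases is carried out correctly and yields exactly the formulas in (iii) and (iv). However, your argument has a concrete error in the step where you justify the condition $\chi|_{I_K}\ne\chi^s|_{I_K}$ for ramified $K$. You assert that $\chi|_{I_K}=\chi^s|_{I_K}$ together with $\chi\ne\chi^s$ on $W_K$ contradicts the supercuspidal origin of $\rho$, but there is no contradiction. If $\chi^s/\chi$ is a nontrivial unramified character of $W_K$, then $\rho=\Ind_{W_K}^{W_F}\chi$ is still irreducible, and $\rho|_{I_F}=\Ind_{I_K}^{I_F}(\chi|_{I_K})$ splits as a sum of two characters of $I_F$. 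By Clifford theory these two characters cannot both be Frobenius-invariant (otherwise $\rho$ twisted by an extension of one of them would factor through $W_F/I_F\simeq\Z$, forcing reducibility), so they are swapped by Frobenius, which means $\rho$ is \emph{also} induced from the unramified quadratic extension --- this is precisely the triply imprimitive situation described by Proposition~\ref{prop:triply-imprimitive} together with Corollary~\ref{cor:factorvianorm}(c). So exhaustiveness survives, but the representation lands in case (iii), not in a contradiction.

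On the ``exactly one'' claim, you correctly flag distinguishing (i) from (iii) as the main obstacle, but your proposed resolution (that the distinction is ``packaged into the structure of the underlying WD-representation'') does not actually close the gap, since the same $\tau$ can be the restriction of many different $(\rho,N)$ and the proposition is a statement about $\tau$ alone. The correct distinguishing invariant is visible at the level of $\tau$: in case (i) both summands of $\tau$ extend to characters of $W_F$, equivalently are fixed by the Frobenius conjugation action on characters of $I_F$, whereas in case (iii) --- reading the hypothesis as $\chi|_{I_K}\ne\chi^s|_{I_K}$, which is what the proof of exhaustiveness requires --- the two summands are distinct and swapped by Frobenius, so neither extends to $W_F$. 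This Frobenius-invariance criterion is what makes the dichotomy between (i) and (iii) well-defined purely in terms of $\tau$.
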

\begin{proof} For (i)--(iv),
see Breuil--M\'ezard \cite[Lemme 2.1.1.2, Th\'eor\`eme~2.1.1.4]{bm02} and for (v) see 
Bushnell--Henniart \cite[\S 41 and \S 42]{bh06}.
\end{proof}

By Proposition \ref{prop:non-prim-inert-type}, the classification of Weil--Deligne representations in the previous section remains well-defined on inertial types, and so we may accordingly say $[\rho, N]$ is \defi{principal series}, \defi{special}, or \defi{(nonexceptional or exceptional) supercuspidal}. 

\subsection{Notation} \label{sec:notation}

We conclude this section with the notation we will use throughout, in one place for convenience.
 
\begin{itemize}
\item We write $\epschar{d} \colon I_p \to \C^\times$ for the character associated to the (ramified) quadratic extension $\Q_p(\sqrt{d})$ of discriminant $d \in \Z_p$ (more precisely, $d \in \Z_p/\Z_p^{\times 2}$).  
\item We write $\tauspnew{p}$ to denote the special (Steinberg) type \eqref{eqn:Spchi} (not including the nilpotent monodromy operator $N$ in the notation); in all other cases, $N=0$.
\item To identify the nonexceptional supercuspidal types, we use the notation 
\begin{equation} \label{eqn:scnot}
\tau_{\textup{sc},p}(d,f,r)_j \colonequals \bigl(\Ind_{W_{\Q_{p}(\sqrt{d})}}^{W_{\Q_p}} \chi_{(d,f,r)} \bigr)|_{I_p} 
\end{equation}
and for the principal series types we use
\begin{equation} \label{eqn:psnot}
\tau_{\textup{ps},p}(1,f,r)  \colonequals \chi_{(1,f,r)}|_{I_p} \oplus \chi_{(1,f,r)}^{-1}|_{I_p}
\end{equation}
where:
\begin{itemize}
%\item $* \in \{\textup{ps},\textup{sc}\}$ is either principal series or (nonexceptional) supercuspidal;
\item $d$ is the discriminant of $K \colonequals \Q_{p}(\sqrt{d})$, with $[K:\Q_p] \leq 2$; 
\item for $p \neq 2$, let $u \in \Z_p^\times \smallsetminus \Z_{p}^{\times 2}$ be a nonsquare, so $\Q_p(\sqrt{u})$ is the unramified quadratic extension of $\Q_p$ (having discriminant $u$);
\item $\chi_{(d,f,r)} \colon W_K \to \C^\times$ is a character, where:
\begin{itemize}
\item $f$ is the conductor exponent of the character $\chi$ (as a power of the maximal ideal in the ring of integers of $K$);
\item $r$ is the order of the character~$\chi$ on the inertia subgroup $I_K\subset W_K$; and 
\item $j$ is an additional label (only needed for $p=3$, see Table \ref{table:Q3}).
\end{itemize}
\end{itemize}
\item For $p=2$, two exceptional (octahedral) representations $\tauex{i}$ for $i=1,2$ are explicitly given (see section \ref{sec:explchar2}).
\end{itemize}

\section{Background on elliptic curves}
\label{S:recCurves}

In this section we organize some facts about elliptic curves and provide a few preliminary results on their inertial types.
Throughout this section, let $E$ be an elliptic curve 
over the $p$-adic field $F$, and let $N_E$ be the conductor of $E$.

\subsection{Inertial types} \label{sec:3point1}

We begin by defining a Weil--Deligne representation
$(\rho_{E},N)$ attached to~$E$: for complete details, we refer to Rohrlich \cite[\S 4 and \S 13--15]{roh94}.   
We start with the representation
$\rho_{E,\ell}  \colon \Gal(\Fbar\,|\,F) \to \GL_2(\Q_\ell)$ defined by the action of $\Gal(\Fbar\,|\,F)$ on the \'etale cohomology group $H_{\textup{et}}^1(E \times_F \Fbar,\Q_\ell) \simeq \Z_\ell^2$ for some prime $\ell \neq p$.  We may also work \emph{dually} with the $\ell$-adic Tate module, via the isomorphism
\begin{equation}
T_\ell(A) \colonequals \varprojlim_n E[\ell^n] \simeq H_{\textup{et}}^1(E \times_F \Fbar,\Q_\ell)\spcheck. 
\end{equation}
The determinant of this representation is the cyclotomic character; so in the principal series case we have $\chi_2=\chi_1^{-1}\cdot |\,|_F$ and in the supercuspidal case, Lemma \ref{lem:centralChar} applies.

Next, we consider two cases.
\begin{itemize}
\item If $E$ has potentially good reduction, then $\rho_{E,\ell}(I_F)$ has finite order.  We take $N=0$ and $\rho_E$ is obtained by extension of scalars of the restriction~$\rho_{E,\ell}|_{W_F}$ via an embedding $\iota\colon \Q_\ell \hookrightarrow \C$; the $\C$-equivalence class is well-defined, independent of choices.  (See also Remark~\ref{rmk:Cp}.)
\item Otherwise, $E$ has potentially multiplicative reduction, and so $\rho_{E,\ell}(I_F)$ is infinite.  Then $E$ obtains split multiplicative reduction over an at most quadratic extension $K$.  Let $\chi$ be the at most quadratic character of $W_F$ attached to $K$.  Then we take
$N = \begin{pmatrix} 0 & 1 \\ 0 & 0 \end{pmatrix}$ and $\rho_E = \Sp(\chi)$ the Steinberg representation attached to $\chi$.
\end{itemize}

In either case, we define the \defi{inertial WD-type}~$\tau_E$ 
of $E$ to be the equivalence class $\tau_E= [\rho_E,N]$ as defined in section~\ref{S:inertialTypes}. Finally, we note that the conductors of~$\rho_{E,p}$ and~$\tau_E$ are both equal to $N_E$, the conductor of $E$ (see e.g.~Rohrlich \cite[\S 18]{roh94} or~Darmon--Diamond--Taylor \cite[Remark~2.14]{DDT}).  

\begin{example}
If $E$ already has good reduction over $F$, then $\tau_E$ is trivial.
\end{example}

\begin{example} \label{exm:twist}
For a ramified quadratic extension $\Q_p(\sqrt{d}) \supseteq \Q_p$ of discriminant~$d$, let $E_d$ be the quadratic twist of~$E$ over $\Q_p$ by~$d$.  Then $\tau_{E_d} \simeq \tau_E \otimes \epschar{d}$,  % , where $\epschar{d}=\chichar{d}|_{I_p}$ is the restriction of the twisting character to inertia. 
and the nilpotent operator $N$ remains unchanged as the finiteness of the image of inertia is an invariant up to twist.
% as $\rho_{E,p}(I_p)$ is finite since $\rho_{E_d,p}(I_p)= (\rho_{E,p} \otimes \chi_d)(I_p)$ is finite.
\end{example}

The following summarizes the well-known possibilites for~$N_E$ in the case $F=\Q_p$.

\begin{lemma} \label{L:condBound} 
Let $E$ be an elliptic curve over $\Q_p$.  Then
\[ 0 \leq \ord_p(\NE) \leq 
\begin{cases}
2, & \text{ if $p \geq 5$}; \\
5, & \text{ if $p=3$}; \\
8, & \text{ if $p=2$.}
\end{cases} \]
Moreover, if $E$ has additive reduction then $\ord_p(N_E) \geq 2$.
%Also, if $E$ has potentially
%good reduction with semistability defect~$e$ then $\ord_p(\NE) = 2$ is and only if $E$ has tame reduction, i.e., $p \nmid e$.
\end{lemma}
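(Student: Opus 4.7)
The plan is to compute $\ord_\ell(\NE) = f(\rho_E)$ directly from the Weil--Deligne representation $(\rho_E, N)$ attached to $E$ in section~\ref{sec:3point1}, using the conductor formulas already recorded in the paper. If $E$ has good reduction, then $\rho_E$ is unramified and $f(\rho_E) = 0$. If $E$ has multiplicative reduction, then $\rho_E = (\Sp(\chi), N)$ with $\chi$ unramified, so~\eqref{E:condSS} yields $f(\rho_E) = 1$. If $E$ has additive reduction, the inertia invariants $V_p(E)^{I_\ell}$ vanish---a nonzero invariant line would correspond to a toric or good quotient of the connected component of the N\'eron model, contradicting additivity---so the tame part of the Artin conductor of $\rho_E|_{I_\ell}$ equals $\dim V_p(E) = 2$. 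Hence $\ord_\ell(\NE) \geq 2$, that is, $\ell^2 \mid \NE$.

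For the upper bounds I would split the additive case into the potentially multiplicative and potentially good subcases. In the former, $\rho_E = (\Sp(\chi), N)$ with $\chi$ a ramified quadratic character of $\Q_\ell^\times$, and formula~\eqref{E:condSS} reduces the bound to $\condexp(\chi)$; ramified quadratic characters of $\Q_\ell^\times$ have conductor exponent $1$ for $\ell \geq 3$ and at most $3$ for $\ell = 2$, yielding $f(\rho_E) \leq 2$ and $f(\rho_E) \leq 6$ respectively. In the potentially good subcase, the semistability defect $e$ satisfies $\gcd(e, \ell) = 1$ for $\ell \geq 5$ (classical, Serre--Tate), $e \in \{2, 3, 4, 6, 12\}$ for $\ell = 3$, and $e \in \{2, 3, 4, 6, 8, 24\}$ for $\ell = 2$. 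When $\ell \geq 5$, $E$ acquires good reduction over a tame extension, so wild inertia acts trivially on $V_p(E)$, the Swan conductor $\mathrm{Sw}(\rho_E)$ vanishes, and $f(\rho_E) = 2$. For $\ell \in \{2, 3\}$, one analyses the higher ramification filtration of the minimal (totally ramified, degree $e$) extension $L\,|\,\Q_\ell$ over which $E$ becomes semistable and bounds $\mathrm{Sw}(\rho_E)$ via the Hasse--Arf theorem, yielding $\mathrm{Sw}(\rho_E) \leq 3$ for $\ell = 3$ and $\mathrm{Sw}(\rho_E) \leq 6$ for $\ell = 2$, i.e., $f(\rho_E) \leq 5$ and $f(\rho_E) \leq 8$.

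The main obstacle is the explicit wild ramification computation in the most wildly ramified cases, namely $e = 12$ at $\ell = 3$ and $e \in \{8, 24\}$ at $\ell = 2$, where one must determine the ramification filtration of specific totally wildly ramified extensions to pin down the Swan conductor. A shortcut is to invoke Ogg's formula $f(E) = \ord_\ell(\Delta_E) - m + 1$, with $m$ the number of irreducible components of the special fiber of the N\'eron model, together with the tables of Papadopoulos extending Tate's algorithm to residue characteristics $2$ and $3$, which deliver the claimed bounds directly without any explicit computation of higher ramification groups.
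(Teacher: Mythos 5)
The paper's entire proof is a citation to Silverman, \emph{Advanced Topics in the Arithmetic of Elliptic Curves}, Ch.~IV, Theorem~10.4, which proves exactly these bounds by the method you outline: writing $\ord_\ell(\NE) = \epsilon + \delta$ with $\epsilon$ the tame part and $\delta$ the Swan conductor, showing $\epsilon \leq 2$ with equality precisely for additive reduction, and then bounding $\delta$ via the higher ramification filtration of $L\,|\,\Q_\ell$. So your framework is the right one and is essentially the same as the proof behind the paper's citation.

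That said, your proposal as written has a genuine gap in the potentially good case for $\ell = 2, 3$. You assert $\mathrm{Sw}(\rho_E) \leq 3$ for $\ell = 3$ and $\mathrm{Sw}(\rho_E) \leq 6$ for $\ell = 2$, but nothing in your argument forces these numbers; you say one ``analyses the higher ramification filtration \ldots and bounds $\mathrm{Sw}(\rho_E)$ via the Hasse--Arf theorem,'' but that analysis \emph{is} the content of the lemma in the wild cases and cannot be left implicit. Concretely, for $\ell = 3$ with $e = 12$ (so $\Gal(L\,|\,\Qun_3) \simeq \Z/3\Z \rtimes \Z/4\Z$) and for $\ell = 2$ with $e = 8$ or $24$ (so $\Gal(L\,|\,\Qun_2) \simeq Q_8$ or $\SL_2(\F_3)$), one must bound the breaks in the upper-numbering filtration. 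In Silverman this is done by constraining the wild inertia group via its faithful action on the formal group / torsion of the good-reduction model, combined with the conductor-discriminant formula; without some such input, the numbers $3$ and $6$ are unjustified. Your proposed shortcut through Ogg's formula $f = \ord_\ell(\Delta_E) - m + 1$ together with the Papadopoulos tables (extending Tate's algorithm to residue characteristic $2$ and $3$) is a legitimate alternative and would indeed close the gap, but note that Ogg's formula alone does not immediately give an upper bound on $f$ --- you still need the classification of Kodaira types and the accompanying bounds on $\ord_\ell(\Delta_E) - m$, i.e.\ the explicit table. At that point you have replaced the paper's one citation (Silverman) with two (Ogg/Saito for the formula plus Papadopoulos for the tables), which is a perfectly reasonable thing to do for a standard lemma, but is not a self-contained derivation.

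Everything else in your proposal is correct: the vanishing of $V_p(E)^{I_\ell}$ for additive reduction giving $\epsilon = 2$ (hence $\ell^2 \mid \NE$), the formula $\condexp(\Sp(\chi)) = 2\condexp(\chi)$ yielding $f \leq 2$ (resp.\ $f \leq 6$) in the potentially multiplicative case for $\ell \geq 3$ (resp.\ $\ell = 2$), and the tameness argument giving $f = 2$ for potentially good reduction at $\ell \geq 5$.
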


\begin{proof} See e.g.\ Silverman \cite[Theorem~IV.10.4]{SilvermanII}.
\end{proof}

\begin{remark}
Elliptic curves defined over \emph{ramified} extensions of $\Q_2$ or $\Q_3$ may have conductors  whose valuations are higher than those given by Lemma~\ref{L:condBound}. 
\end{remark}

\subsection{Potentially good reduction}

In this section, suppose that $E$ over $F$ has potentially good reduction, and let $\tau=\tau_E$ be its inertial type (with $N=0$).  In this section, we set up some preparatory facts needed in the sequel.  
 
Let $m \in \Z_{\geq 3}$ be coprime to~$p$, and let $L \colonequals \Fun(E[m])$. 
The extension~$L$ is independent of~$m$ 
(see Serre-Tate~\cite[\S 2, Corollary~3]{ST1968})
and it has two other equivalent descriptions:
\begin{itemize}
\item $L$ is the minimal extension of $\Fun$ where $E$ achieves good reduction; and
\item $L$ is the fixed field of $\ker \tau$.
\end{itemize}
We call $L$ the \defi{inertial field} of $E$.  Write $\Phi \colonequals \Gal(L\,|\,\Fun)$ and define the \defi{semistability defect} of $E$ to be $e = e_E \colonequals \#\Phi$.  
The field $L$ is the compositum of $\Fun$ by a minimal totally ramified extension of $F$ of degree $e$ where $E$ obtains good reduction (though this extension need not be Galois over $F$), so this definition agrees with the one in the introduction.  

The semistability defect can be computed as the ramification degree of $\Q_p(E[m])$, realized using division polynomials.  

The following describes the possibilities for~$\Phi$.

\begin{lemma} \label{L:phi} 
Exactly one of the following possibilities hold.
\begin{enumroman}
 \item $\Phi$ is cyclic of order $2,3,4,6$;
 \item $p=3$ and $\Phi \simeq \Z/3 \rtimes \Z/4$ is of order $12$;
 \item $p=2$ and $\Phi \simeq Q_8$ is isomorphic to a quaternion group of order $8$; or
 \item $p=2$ and $\Phi \simeq \SL_2(\F_3)$ is of order $24$.
\end{enumroman}
\end{lemma}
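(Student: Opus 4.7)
The plan is to realize $\Phi$ as a subgroup of $\Aut_{\kbar}(\tilde E)$, where $\tilde E$ is the reduction of $E$ after it acquires good reduction, and then combine the classical classification of automorphism groups of elliptic curves over algebraically closed fields with the local-Galois-theoretic constraints that $\Phi$ must satisfy.

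First, fix an integer $m \geq 3$ with $\gcd(m,\ell)=1$. Since $\Fun$ has algebraically closed residue field $\kbar$, every finite extension of $\Fun$ is totally ramified, so the residue field of $L$ is again $\kbar$. Because $E$ acquires good reduction over $L$, its N\'eron model over $\calO_L$ is an elliptic curve with smooth reduction $\tilde E$ over $\kbar$, and the reduction map gives a $\Phi$-equivariant isomorphism $E[m] \isomto \tilde E[m]$. By the Serre--Tate characterization $L = \Fun(E[m])$, the group $\Phi$ acts faithfully on $E[m]$, and hence on $\tilde E$. Because this action fixes the origin, it is by elliptic-curve automorphisms, producing an injection $\Phi \hookrightarrow \Aut_{\kbar}(\tilde E)$.

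Next, I invoke the classification of $\Aut_{\kbar}(\tilde E)$ over algebraically closed fields (see Silverman, Chapter~III, Theorem~10.1 and Appendix~A). In characteristic $\neq 2, 3$, this group is cyclic of order $2$, $4$, or $6$ depending on whether $j(\tilde E) \notin \{0, 1728\}$, $j = 1728$, or $j = 0$. In characteristic $3$ it is $\Z/2\Z$ except when $j=0$, in which case it is the dicyclic group $\Z/3\Z \rtimes \Z/4\Z$ of order $12$. In characteristic $2$ it is $\Z/2\Z$ except when $j=0$, in which case it is $\SL_2(\F_3)$ of order $24$.

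Finally, since $L/\Fun$ is totally ramified, $\Phi$ is an extension of a cyclic tame quotient of order prime to $\ell$ by a pro-$\ell$ wild inertia subgroup. With this structural constraint in hand, I would enumerate the nontrivial subgroups of each possible $\Aut_{\kbar}(\tilde E)$ and match against the statement. For $\ell \geq 5$ all subgroups are cyclic of order dividing $6$, yielding case (i). For $\ell = 3$, the proper subgroups of $\Z/3\Z \rtimes \Z/4\Z$ are cyclic of orders $2, 3, 4, 6$, so we obtain (i) or (ii). For $\ell = 2$, the subgroups of $\SL_2(\F_3)$ are precisely $\{1\}, \Z/2\Z, \Z/3\Z, \Z/4\Z, \Z/6\Z, Q_8$, and $\SL_2(\F_3)$ itself, yielding (i), (iii), or (iv). The main technical point is the faithfulness of the induced $\Phi$-action on $\tilde E$ that produces the embedding into $\Aut_{\kbar}(\tilde E)$; once this is established, the remainder is straightforward subgroup enumeration.
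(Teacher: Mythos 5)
Your proposal is correct and reproduces the standard Serre--Tate argument underlying the classification in Kraus's paper, which is precisely what the paper cites for this lemma: $\Phi$ embeds into $\Aut_{\kbar}(\tilde E)$ via the faithful action on $m$-torsion together with injectivity of reduction, and one then enumerates subgroups of the known automorphism groups (with the binary tetrahedral and dicyclic groups arising in residue characteristics $2$ and $3$ for $j=0$). The only point worth flagging is that the trivial group is absent from the lemma's list, so one should note the implicit running hypothesis that $E$ has additive reduction over $F$ (equivalently $\Phi\neq\{1\}$), which is how the lemma is used throughout the paper.
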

\begin{proof}
See Kraus \cite[pp.\ 354--357]{kra90}.
\end{proof}

\begin{corollary}
Let $p \geq 5$ and suppose $E$ has $p^2 \mid N_E$.  Then the semistability defect $e$ is the smallest integer $e \in \{2,3,4,6\}$ such that $E$ obtains good reduction over $\Q_p(\sqrt[e]{p})$.  
\end{corollary}

\begin{proof}
We are in case (i), so $\Phi$ is cyclic and the inertial field is tame so given by $\Q\spun_p(\sqrt[e]{p})$; but good reduction is invariant under unramified extensions, so it suffices to check over $\Q_p(\sqrt[e]{p})$.
\end{proof}

The above lemma crucially restricts the possibilities in the ramified case, as follows.

\begin{cor} \label{cor:notfactorbyinertia}
Suppose $\tau_E \simeq \Ind_{I_{\Mquad}}^{I_{F}}(\chi|_{I_K})$ where $\Mquad \supset F$ is a ramified quadratic extension and $\chi \colon W_{\Mquad} \to \C^\times$ is a character that does not factor through the norm.  Then either $p=2$ or (\/$p=3$ and $e=12$).
\end{cor}

\begin{proof}
If $p \geq 5$, then by Lemma~\ref{L:phi}, $\Phi$ is cyclic, and so $\tau_E$ is reducible.  But  by Proposition~\ref{prop:non-prim-inert-type}(iv), $\tau_E$ is irreducible, a contradiction.  If $p=3$, then we must have case (ii) in Lemma~\ref{L:phi}.
\end{proof}

Now suppose that $F=\Q_p$.  The next lemma already determines $\tau$ in the first nontrivial case.   

\begin{lemma} \label{L:e=2}
Suppose $E$ over $F=\Q_p$ has semistability defect $e_E=2$.  Then $\tau_E$ is principal series, and the following statements hold.  
\begin{enumalph}
 \item If $p \geq 3$, then $N_E=p^2$ and $\tau_E \simeq \epschar{p}$.  
 \item If $p =2$, then $N_E=2^4,2^6$, and 
 \[ \tau_E \simeq 
 \begin{cases}
 \epschar{-4}, & \text{if $N_E = 2^4$;} \\
 \epschar{\pm 8}, & \text{if $N_E = 2^6$.}
 \end{cases} \]
 \end{enumalph}
\end{lemma}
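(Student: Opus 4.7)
The plan is to exploit the constraint $e=2$: it forces $\tau_E$ to factor through a cyclic group of order two, which together with the triviality of the determinant on inertia and the conductor formula pins $\tau_E$ down uniquely.

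First I would observe that $\tau_E$ factors through $\Phi = \Gal(L\,|\,\Q_\ell^{\textup{un}}) \cong \Z/2\Z$ (by the definition of the inertial field recalled in Section~\ref{S:recCurves}), so $\tau_E$ is a semisimple complex $2$-dimensional representation of $\Z/2\Z$. Since $\det \rho_{E,p}$ is (a power of) the $p$-adic cyclotomic character and hence unramified at $\ell \neq p$, the determinant $\det \tau_E$ is trivial on $I_\ell$. Writing $\chi_0$ for the unique nontrivial character of $\Phi$, the only semisimple $2$-dimensional representations of $\Phi$ with trivial determinant are $\mathbf{1}\oplus\mathbf{1}$ and $\chi_0\oplus\chi_0$; since $L \neq \Q_\ell^{\textup{un}}$ forces $\tau_E$ to be nontrivial on $I_\ell$, we must have $\tau_E \simeq \chi_0 \oplus \chi_0$, which is of principal series type by Proposition~\ref{prop:non-prim-inert-type}(i).

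It then remains to identify $\chi_0$ with a specific quadratic character of $I_\ell$ and to compute the conductor via \eqref{E:condPS}. For $\ell\ge 3$, there is a unique ramified quadratic extension of $\Q_\ell^{\textup{un}}$, giving $\chi_0=\epschar{\ell}$, and $\condexp(\tau_E) = 2\condexp(\epschar{\ell}) = 2$, i.e., $N_E = \ell^2$. For $\ell=2$, using that the nontrivial classes in $(\Q_2^\times)/(\Q_2^{\textup{un}, \times})^2 \cap \Q_2^\times$ are represented by $-1, 2, -2$, the three possible ramified quadratic extensions of $\Q_2^{\textup{un}}$ arising from an elliptic curve over $\Q_2$ correspond to $\chi_0 \in \{\epschar{-4}, \epschar{8}, \epschar{-8}\}$ with conductor exponents $2,3,3$ respectively. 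Applying \eqref{E:condPS} then links $N_E = 2^4$ with $\chi_0 = \epschar{-4}$ and $N_E = 2^6$ with $\chi_0 = \epschar{\pm 8}$.

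The main obstacle, in my view, is the bookkeeping at $\ell=2$: one must verify that $\{\epschar{-4}, \epschar{8}, \epschar{-8}\}$ really exhausts the possibilities by analyzing square classes in $\Q_2^{\textup{un}, \times}$ (reducing to the fact that a unit $u \in \Z_2^\times$ becomes a square in $\Z_2^{\textup{un}}$ iff $u \equiv 1 \pmod 4$, which collapses the six ramified quadratic extensions of $\Q_2$ to three) and read off the conductor exponents from the discriminants $-4, 8, -8$ of the corresponding quadratic extensions of $\Q_2$. Beyond this explicit check, the argument is a direct application of the conductor formula.
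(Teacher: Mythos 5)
Your proof is correct, but it reaches the key intermediate conclusion by a different route than the paper. The paper's proof quotes Freitas--Kraus \cite{fk17} for the fact that $e=2$ is equivalent to the existence of a ramified quadratic twist $E_d$ with good reduction, and then simply twists: $\tau \simeq \tau_{E_d}\otimes \epschar{d} \simeq \epschar{d}$. You instead argue intrinsically: $\tau$ factors faithfully through $\Phi\simeq \Z/2\Z$, is semisimple, and has trivial determinant on $I_\ell$ (since $\det\rho_E=\omega$ is unramified), which forces $\tau\simeq\chi_0\oplus\chi_0$ with $\chi_0$ the order-$2$ character cutting out $L$ over $\Qun_\ell$. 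This is a legitimate and self-contained replacement for the citation, at the cost of having to invoke the determinant condition (which the paper does use elsewhere, e.g.\ in Lemma~\ref{lem:centralChar}) and of the square-class bookkeeping over $\Qun_2$ that you correctly flag: units of $\Z_2^\times$ become squares in $\Zhat_2\spun$ exactly when congruent to $1 \pmod 4$, so the three ramified quadratic extensions of $\Qun_2$ give $\chi_0\in\{\epschar{-4},\epschar{8},\epschar{-8}\}$ with conductor exponents $2,3,3$. From there both arguments conclude identically via $\ord_\ell(N_E)=\condexp(\tau)=2\condexp(\chi_0)$ using \eqref{E:condPS}. One small stylistic point: your displayed quotient $(\Q_2^\times)/(\Q_2^{\textup{un},\times})^2\cap\Q_2^\times$ should be written as $\Q_2^\times/\bigl(\Q_2^\times\cap(\Q_2^{\textup{un},\times})^2\bigr)$, but the intended computation is right.
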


\begin{proof} 
The hypothesis that $e=e_E=2$ means that there exists a ramified quadratic extension $\Q_p(\sqrt{d}) \supseteq \Q_p$ such that the quadratic twist $E_d$ (as in Example \ref{exm:twist}) has good reduction (see e.g.\ Freitas--Kraus~\cite[Lemmas~3--4]{fk17}) and therefore the inertial type $\tau_{E_d}$ is trivial.  Thus 
\[ \tau \simeq \tau_{E_d} \otimes \epschar{d} \simeq \epschar{d} \simeq \PS(\chichar{d},\chichar{d})|_{I_p} \] 
is principal series.  We have $d=p$ if $p \geq 3$; if $p=2$, we have $d=-4,\pm 8$.
The claim on the conductor follows from 
$\ord_p(N_E) = \condexp(\tau) = 2\cond (\chichar{d})$ by \eqref{E:condPS}.
\end{proof}

At the other extreme, we conclude this short section with a preliminary step in classifying exceptional supercuspidal types arising from elliptic curves over $\Q_2$; these will be given explicitly in section~\ref{S:exceptionalThm}.

\begin{lemma} \label{L:exceptional=24}
Suppose $F=\Q_2$ and $E$ has potentially good reduction.  Then $\tau_E$ is exceptional supercuspidal if and only if $e=24$.  
\end{lemma}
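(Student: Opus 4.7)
The plan is to read off the classification of $\tau$ directly from the finite group $\Phi = \Gal(L\,|\,\Q_2^{\textup{un}})$ introduced in Section~\ref{S:recCurves}, through which $\tau$ factors as a faithful embedding $\Phi \hookrightarrow \GL_2(\C)$. Over $F = \Q_2$ with potentially good reduction, Lemma~\ref{L:phi} gives the possibilities: $\Phi$ is cyclic of order $e \in \{2,3,4,6\}$; $\Phi \simeq Q_8$ (with $e = 8$); or $\Phi \simeq \SL_2(\F_3)$ (with $e = 24$). The strategy is to match each case against Proposition~\ref{prop:non-prim-inert-type} by computing the image of $\Phi$ in $\PGL_2(\C)$.

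If $\Phi$ is cyclic, then $\tau(\Phi)$ is abelian and, being finite, is semisimple; hence it is conjugate to a subgroup of the diagonal torus. So $\tau$ is a direct sum of two characters of $I_{\Q_2}$, placing it in case~(i) or~(iii) of Proposition~\ref{prop:non-prim-inert-type} --- not exceptional. For $\Phi \simeq Q_8$ or $\Phi \simeq \SL_2(\F_3)$, the representation $\tau$ is forced to be irreducible: every one-dimensional character of either group is trivial on the center (for $\SL_2(\F_3)$, note that $\{\pm I\}$ lies inside the commutator subgroup, which is isomorphic to $Q_8$), so no direct sum of one-dimensional characters can be faithful. Schur's lemma then forces the image of the center to lie in the scalar matrices of $\GL_2(\C)$.

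Consequently, for $\Phi \simeq Q_8$ the projective image is $Q_8/\{\pm 1\} \simeq V_4 \simeq D_2$, which is dihedral; this places $\tau$ in case~(iv) of Proposition~\ref{prop:non-prim-inert-type} (a ramified quadratic induction), so it is not exceptional. For $\Phi \simeq \SL_2(\F_3)$ the projective image is $\PSL_2(\F_3) \simeq A_4$; since $A_4$ is neither abelian nor dihedral, $\tau$ cannot be of types (i)--(iv), so Proposition~\ref{prop:non-prim-inert-type} forces $\tau$ to be of case~(v), i.e., exceptional. This establishes both directions of the claimed equivalence.

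The only delicate step is the irreducibility of $\tau$ in the non-cyclic cases, which reduces to the simple observation that every one-dimensional character is trivial on the center of $\Phi$. Beyond that, the matchup with Proposition~\ref{prop:non-prim-inert-type} via projective images is elementary group theory, and I expect no further obstruction.
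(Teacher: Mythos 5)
Your proof is correct, and it takes a genuinely different route from the paper's. The paper's forward direction invokes Bushnell--Henniart~\cite[\S 42.3]{bh06} for a lower bound ($e \geq 12$ for exceptional types) combined with Lemma~\ref{L:phi}; its backward direction is substantially more indirect, passing through Dokchitser--Dokchitser~\cite[Lemma~1]{dd08} and the mod-$3$ Galois representation $\rhobar_3$ to deduce that $\mathrm{P}\rho_{E,3}$ has image $S_4$, so $\rho_{E,3}$ is exceptional. By contrast, you read off the answer directly from the list in Lemma~\ref{L:phi} by computing, for each admissible $\Phi$, the projective image of the faithful two-dimensional representation $\tau$ and matching it against the mutually exclusive cases of Proposition~\ref{prop:non-prim-inert-type}: cyclic $\Phi$ gives reducible $\tau$ (cases (i) or (iii)); $Q_8$ gives projective image $V_4$ (a ramified quadratic induction, case (iv)); and $\SL_2(\F_3)$ gives projective image $A_4$, which is neither abelian nor dihedral, so only case (v) is left. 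The irreducibility argument for the nonabelian cases (centrality of $\{\pm I\}$ is incompatible with reducibility because the center lies in the derived subgroup) is exactly right. Your route is more self-contained relative to the paper's own Lemma~\ref{L:phi} and Proposition~\ref{prop:non-prim-inert-type}, and it avoids both outside citations; note, however, that the paper's detour through $\rhobar_3$ also delivers the surjectivity of $\rhobar_3$ onto $\GL_2(\F_3)$, which is a fact reused in the proof of Theorem~\ref{thm:exceptell2}, so the authors may have had that downstream application in mind. One small point worth flagging: your $\Phi \simeq Q_8$ step (``not exceptional'') implicitly leans on the mutual exclusivity of cases (iv) and (v) in Proposition~\ref{prop:non-prim-inert-type}, which ultimately rests on the same structural input from Bushnell--Henniart that the paper cites; this is legitimate but worth acknowledging.
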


\begin{proof} 
Suppose $\tau$ is exceptional.  We look at the group structure of the image of the projective representation obtained by postcomposing with $\GL_2(\C) \to \PGL_2(\C)$.  By Bushnell--Henniart \cite[section~42.3]{bh06}, we must have semistability defect $e \geq 12$, so in fact $e=24$ by Lemma~\ref{L:phi}.

Conversely, suppose $e=24$, and let 
$\rho_{E,3} \colon W_2 \to \GL_2(\Q_3)$ and
$\rhobar_3 \colon W_2 \to \GL_2(\F_3)$ respectively be the $3$-adic and mod $3$ Galois representations associated to $E$, 
restricted to the Weil--Deligne group $W_2$.  By Dokchitser--Dokchitser \cite[Lemma~1]{dd08}, there is an unramified twist of $\rho_{E,3}$ factoring through the Galois group of $K \colonequals \Q_2(E[3])$ (over $\Q_2$), so the images of the projective representations $\mathrm{P}\rho_{E,3} \colon W_2 \to \PGL_2(\Q_3)$ and $\mathrm{P}\rhobar_{3} \colon W_2 \to \PGL_2(\F_3)$ are isomorphic as abstract groups.  By hypothesis and Lemma~\ref{L:phi}, $\tau$ is irreducible with image isomorphic to $\Phi \simeq \SL_2(\F_3)$, so $\rhobar_3(W_2) = \GL_2(\F_3)$ and $\rhobar_3$ is surjective \cite[Table~1]{dd08}.  Thus $\mathrm{P}\rho_{E,3}$ has image isomorphic to $\PGL_2(\F_3) \simeq S_4$, so $\rho_{E,3}$ (extended to $\GL_2(\C)$) and hence~$\tau_E$ is exceptional  supercuspidal.
\end{proof}

\section{Inertial types: uniform cases} \label{sec:potmotred}

We now embark on an explicit and complete description of the inertial types arising from elliptic curves over $\Q_p$.  In this section, we treat two cases where the answer is close to uniform in $p$: potentially multiplicative reduction and $p \geq 5$.
Throughout, we use the notation collected in 
section~\ref{sec:notation}.

\subsection{Potentially multiplicative reduction and special types}
\label{S:multiplicative}

We begin with a general result on inertial types for elliptic curves with potentially multiplicative reduction. These are the only types with a nonzero nilpotent operator as in section \ref{sec:3point1}.  
% more precisely $N = \left(\!\begin{smallmatrix} 0 & 1\\ 0 &0 \end{smallmatrix}\!\right)$.

\begin{proposition}  \label{prop:Emult}
Let $E$ be an elliptic curve over $\Q_p$ with potentially multiplicative reduction, conductor $\NE$, and inertial type $\tau_E$.  % Then $\tau = \Sp(\chi)|_{I_p}$ for some character $\chi$ of $W_p$. 
Then the following statements hold.
\begin{enumalph}
\item If $E$ has multiplicative reduction (over $\Q_p$), then $\NE=p$ and $\tau_E \simeq \tauspnew{p}$ is special. 
\item Suppose $E$ has additive (but potentially multiplicative) reduction.  Then $p^2 \mid \NE$, and $\tau_E$ is special.  Moreover: 
\begin{enumromanii}
 \item  If $p \geq 3$, then $\NE=p^2$ and $\tau_E \simeq \tauspnew{p} \otimes \epschar{p}$.
 \item  If $p = 2$, then $\NE=p^4$ or $\NE=p^6$, and
 \[ \tau_E \simeq 
 \begin{cases} 
 \tauspnew{2} \otimes \epschar{-4}, & \text{ if $\NE=p^4$;} \\
 \tauspnew{2} \otimes \epschar{\pm 8} \text{ or } \tauspnew{2} \otimes \epschar{\pm 8}, & \text{ if $\NE=p^6$.}
 \end{cases} \]
 \end{enumromanii}
\end{enumalph}
\end{proposition}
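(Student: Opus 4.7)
The plan is to combine the standard description of the Weil--Deligne representation attached to a potentially multiplicative elliptic curve (already recalled in section~\ref{sec:3point1}) with a quadratic twist argument, then enumerate ramified quadratic characters on inertia and read off conductor exponents via~\eqref{E:condSS}.

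If $E/\Q_\ell$ has potentially multiplicative reduction, then $\rho_E \simeq \Sp(\chi_K)$, where $\chi_K \colon W_{\Q_\ell} \to \{\pm 1\}$ is the at most quadratic character whose kernel cuts out the minimal extension $K \supseteq \Q_\ell$ over which $E$ acquires split multiplicative reduction. Consequently $\tau_E \simeq \tauspnew{\ell} \otimes (\chi_K|_{I_\ell})$, so $\tau_E$ is special, and its conductor exponent is given by~\eqref{E:condSS}. The argument then splits according to whether $E$ is already multiplicative over $\Q_\ell$ (in which case $K/\Q_\ell$ is trivial or unramified quadratic) or genuinely additive and potentially multiplicative (in which case semistable reduction passes under unramified base change, so $K/\Q_\ell$ must be ramified quadratic).

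For part~(a), $\chi_K$ is unramified, so $\chi_K|_{I_\ell}$ is trivial, yielding $\tau_E \simeq \tauspnew{\ell}$; formula~\eqref{E:condSS} then gives $\condexp(\tau_E) = 1$, hence $N_E = \ell$. For part~(b), $K = \Q_\ell(\sqrt{d})$ is ramified, $\chi_K = \chichar{d}$, and $\chi_K|_{I_\ell} = \epschar{d}$; by~\eqref{E:condSS} we have $\condexp(\tau_E) = 2\condexp(\chichar{d}) \geq 2$, so $\ell^2 \mid N_E$, establishing the general structural claims.

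The remaining task is to enumerate the possibilities for $\epschar{d}$ modulo unramified twist (which is invisible on $I_\ell$). For $\ell \geq 3$, the two ramified quadratic extensions $\Q_\ell(\sqrt{\ell})$ and $\Q_\ell(\sqrt{u\ell})$ (with $u \in \Z_\ell^\times$ a nonsquare unit) differ by the unramified quadratic character, so they give the common inertia character $\epschar{\ell}$ of conductor exponent~$1$, yielding $\tau_E \simeq \tauspnew{\ell} \otimes \epschar{\ell}$ with $N_E = \ell^2$. For $\ell = 2$, the six ramified quadratic extensions of $\Q_2$ partition into three classes under twisting by the unramified quadratic character $\chichar{5}$, represented by $\Q_2(\sqrt{-1})$, $\Q_2(\sqrt{2})$, and $\Q_2(\sqrt{-2})$, with discriminants $-4, 8, -8$ and conductor exponents $2, 3, 3$; this produces the tabulated cases with $N_E = 2^4, 2^6, 2^6$. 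The only genuine bookkeeping is this case $\ell = 2$: verifying that the three inertia-classes are distinct and computing the conductor exponent of each $\chichar{d}$ from the discriminant of $\Q_2(\sqrt{d})/\Q_2$.
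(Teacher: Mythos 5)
Your proposal is correct and takes essentially the same approach as the paper's proof: both rest on the decomposition $\rho_E \simeq \Sp(\chi_K)$ from section~\ref{sec:3point1}, the identity $\Sp(\chi)|_{I_\ell} \simeq \chi|_{I_\ell}\otimes\Sp(1)|_{I_\ell}$, the conductor formula~\eqref{E:condSS}, and the enumeration of ramified quadratic characters of $I_\ell$. The only (harmless) difference is the direction of a step: you deduce $\chi_K$ unramified (resp.\ ramified) from the reduction type of $E$ and then read off the conductor from~\eqref{E:condSS}, whereas the paper starts from the standard conductor facts ($N_E=\ell$ for multiplicative, $\ell^2\mid N_E$ for additive) and infers the ramification of $\chi$; the two are logically interchangeable.
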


\begin{proof} 
We recall from section \ref{sec:3point1} that $\rho_E = \Sp(\chi)$
and $\tau=\tau_E=\Sp(\chi)|_{I_p}$ for some quadratic character~$\chi$ 
of~$W_p$.

In part (a), we have $\NE = \cond(\tau) = p$ and by the conductor formula~\eqref{E:condSS} it follows that $\chi$ is unramified; in this case, 
\begin{equation} 
\tau = (\chi \otimes \Sp(1))|_{I_p} = \chi|_{I_p} \otimes \Sp(1)|_{I_p} = \Sp(1)|_{I_p} = \tauspnew{p}. 
\end{equation}
We turn to part (b).  We have $p^2 \mid \NE$, and formula~\eqref{E:condSS} gives $\NE = \cond(\tau) = p^{2m}$, 
so $\chi$ is ramified with $\cond(\chi) = p^m$.  
If $p \geq 3$, then any quadratic character $\chi\colon W_p \to \C^\times$ has conductor~$p$ and satisfies $\chi|_{I_p} = \epschar{p}$. 
Thus $\NE=p^2$, and $\tau \simeq \tauspnew{p} \otimes \epschar{p}$, proving~(i).  Otherwise, we have $p = 2$, and we conclude similarly, as in Lemma~\ref{L:e=2}: we have $\chi|_{I_p} = \epschar{-4},\epschar{\pm 8}$ of conductors $2^2,2^3$, proving~(ii).
\end{proof}

\subsection{Inertial types for \texorpdfstring{$p \geq 5$}{ellgeq5}} \label{sec:inert5}

The preceding section treated all cases of potentially multiplicative reduction, so for the rest of this paper we turn to the case of potentially good reduction. In particular, $N=0$. 
%We recall notation from section \ref{sec:notation}.
Here  we treat the case $p \geq 5$.

\begin{proposition} \label{P:peq5} 
Let $p \geq 5$.  Let $E$ be an elliptic curve over $\Q_p$ with additive potentially good reduction, semistability defect $e \geq 3$, and inertial type $\tau$.  Then the following statements hold.
\begin{enumalph}
 \item If $e \mid (p-1)$, then $\tau \simeq \tauPSnew{p}{1,1,e}$ is principal series.
 \item If $e \mid (p+1)$, then $\tau \simeq \tauSCnew{p}{u,2,e}$ is supercuspidal.
\end{enumalph}
\end{proposition}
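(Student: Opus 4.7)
My plan exploits the cyclic structure of $\Phi$.  Since $\ell \geq 5$ and $e \geq 3$, Lemma~\ref{L:phi} forces $\Phi = \Gal(L\,|\,\Q_\ell^{\textup{un}})$ to be cyclic of order $e \in \{3,4,6\}$.  The inertial type $\tau \colon I_\ell \to \GL_2(\C)$ factors faithfully through $\Phi$ (because $L$ is the fixed field of $\ker \tau$), so its image is a cyclic subgroup of $\GL_2(\C)$ of order $e$, hence diagonalizable.  Thus $\tau \simeq \chi \oplus \chi'$ for characters $\chi,\chi' \colon I_\ell \to \C^\times$ factoring through $\Phi$.  Since for any prime $p\neq \ell$ the $p$-adic cyclotomic character is unramified at $\ell$, we get $\det \tau = 1$, forcing $\chi' = \chi^{-1}$; the faithfulness of $\tau|_\Phi$ ensures that $\chi$ has order exactly $e$ on $I_\ell$.

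The next step is to decide whether $\chi$ extends to a character of $W_{\Q_\ell}$ or only to one of $W_K$, where $K = \Q_\ell(\sqrt{u})$ is the unramified quadratic extension of $\Q_\ell$.  The character $\chi$ is tame (its order $e$ is coprime to $\ell$), so it factors through the tame quotient $I_\ell^{\textup{tame}}$, on which a Frobenius lift acts by $x \mapsto x^\ell$.  Therefore $\chi$ extends to $W_{\Q_\ell}$ iff $\chi^{\ell-1} = 1$ iff $e \mid (\ell - 1)$; and since $I_K^{\textup{tame}} = I_\ell^{\textup{tame}}$ while the $K$-Frobenius acts by $x \mapsto x^{\ell^2}$, $\chi$ always admits an extension $\tilde \chi$ to $W_K$ (we always have $e \mid (\ell^2 - 1)$ since $e \in \{3,4,6\}$ and $\ell \geq 5$).

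In case~(a), $e \mid (\ell-1)$: pick a tame extension $\tilde\chi \colon W_{\Q_\ell} \to \C^\times$ of conductor exponent $1$.  Then $\tau \simeq \PS(\tilde\chi, \tilde\chi^{-1})|_{I_\ell}$ is principal series by Proposition~\ref{prop:non-prim-inert-type}(i), and the conductor formula~\eqref{E:condPS} recovers $v_\ell(\NE) = 2$; matching the parameters $(d,f,r)=(1,1,e)$ gives $\tau \simeq \tauPSnew{\ell}{1,1,e}$.  In case~(b), $e \mid (\ell+1)$ and necessarily $e \nmid (\ell-1)$ (their gcd divides $2 < e$), so $\chi$ does not extend to $W_{\Q_\ell}$.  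Frobenius conjugation on $I_K$ sends $\tilde\chi$ to $\tilde\chi^s$ with $\tilde\chi^s|_{I_K} = \chi^\ell = \chi^{-1}$ (using $\chi^{\ell+1} = 1$), so in particular $\tilde\chi^s \neq \tilde\chi$, and
\[
  \tau \simeq \bigl(\Ind_{W_K}^{W_{\Q_\ell}} \tilde\chi\bigr)\bigr|_{I_\ell} = \tilde\chi|_{I_K} \oplus \tilde\chi^s|_{I_K}
\]
is (nonexceptional) supercuspidal by Proposition~\ref{prop:non-prim-inert-type}(iii), matching $\tauSCnew{\ell}{u,2,e}$.

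The main conceptual step is identifying the Frobenius-equivariance dichotomy $e \mid (\ell-1)$ versus $e \mid (\ell+1)$ with the principal series versus supercuspidal distinction for the cyclic image $\Phi$.  Once this is in place, matching the remaining parameters $(d,f,r)$ to the tabulated notation and verifying the total conductor via~\eqref{E:condPS} and~\eqref{E:condBC} is a routine check.
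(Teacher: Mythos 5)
Your proof is correct and a bit more streamlined than the paper's. Both proofs reach the same decomposition $\tau \simeq \chi \oplus \chi^{-1}$ on inertia with $\chi$ of order $e$, but they organize the dichotomy differently. The paper first invokes Proposition~\ref{prop:non-prim-inert-type} to classify $\tau$ as either a principal-series restriction or a supercuspidal induced from the unramified quadratic $K$, and then extracts a congruence on $e$ in each branch: in the principal-series branch, $\chi|_{I_\ell}$ factors through $\F_\ell^\times$ so $e \mid (\ell-1)$; in the supercuspidal branch, Lemma~\ref{lem:centralChar} gives $\chi^{\textup{A}}|_{\Z_\ell^\times} = 1$, and the factorization through $\F_{\ell^2}^\times$ then forces $e \mid (\ell+1)$. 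You instead determine, directly from the Frobenius action $x \mapsto x^\ell$ on the tame quotient, whether the degree-$e$ tame character $\chi$ of $I_\ell$ extends to $W_{\Q_\ell}$ (exactly when $e \mid (\ell-1)$, yielding principal series) or only to $W_K$ with $\chi^s = \chi^\ell = \chi^{-1}$ (exactly when $e \mid (\ell+1)$, yielding the unramified induction). Your route builds the extending Weil character by hand rather than quoting the classification, replaces the norm-map computation of Lemma~\ref{lem:centralChar} by the tame-Frobenius action, and makes the $e \mid (\ell\mp 1)$ dichotomy visible at a single conceptual point; the paper's route is organized so that the same Proposition~\ref{prop:non-prim-inert-type} template is reusable for $\ell = 2, 3$. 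Both, implicitly or explicitly, rely on the observation that $e \geq 3$ cannot divide both $\ell - 1$ and $\ell + 1$, which is what turns the one-directional deductions into the stated biconditional dichotomy.
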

\begin{proof}
Lemma~\ref{L:condBound} implies that $\tau$ has conductor $p^2$, and Lemma \ref{L:phi} shows that 
the image of~$\tau$ is cyclic of order $e=3,4,6$. 
From the classification in Proposition \ref{prop:non-prim-inert-type},
$\tau$ is reducible with finite image, hence it is either principal series or nonexceptional supercuspidal induced from the unramified quadratic extension $K=\Q_{p^2}$ of~$\Q_p$. 

Suppose that $\tau$ is principal series. Then, $\tau = \chi|_{I_p} \oplus \chi^{-1}|_{I_p}$, where $\chi$ is a character of $W_p$ of conductor $p$ and order $e$. To ease notation we write~$\chi$ also for~$\chi\spArt$. Thus, $\chi|_{I_p}$ factors through $(\Z_p/p\Z_p)^\times \simeq \F_{p}^\times$ a cyclic group of order $p-1$, so $e \mid (p-1)$.  Let $u \in \Z_p^\times$ and~$\chi_e$ be as in section~\ref{sec:notation}. So the reduction of~$u$ generates~$\F_p^\times$.  We have $\chi(u) = \zeta_e^c=\exp(2\pi ic/e)$ with $\gcd(c,e)=1$.  Since $e=3,4,6$, we must have $c \equiv \pm 1 \pmod{e}$, so $\chi|_{I_p}=\chi_e^{\pm 1}$ and either choice gives $\tau \simeq \tauPSnew{p}{1,1,e}$.

To finish, suppose $\tau$ is supercuspidal, obtained by induction of a character~$\chi$ of~$W_K$ of order~$e$ on~$I_K$. Since $\tau$ has conductor $p^2\calO_K$, it follows that $\chi\spArt$ viewed as a character of $K^\times$ has conductor~$p\calO_K$. Moreover, by Lemma~\ref{lem:centralChar}, we have $\tau = \chi|_{I_K} \oplus \chi^{-1}|_{I_K}$ and~$\chi\spArt|_{\Z_p^\times} = 1$ (as~$K$ is unramified over $\Q_p$).   
Now $\chi|_{I_K}$ factors through $(\Z_{p^2}/p\Z_{p^2})^\times \simeq \F_{p^2}^\times$ so $e \mid (p^2-1)=(p+1)(p-1)$.  Let $u \in \Z_{p^2}^\times$ be as in section \ref{sec:notation}, so its reduction generates $\F_{p^2}^\times$.  Then $u^{p+1}$ generates $(\Z_p/p\Z_p)^\times \leq (\Z_{p^2}/p\Z_{p^2})^\times$.  The condition $\chi|_{\Z_p^\times}=1$ implies that $e \mid (p+1)$.  We again have $\chi(u) = \zeta_e^c$ and as in the previous paragraph we must have $\chi|_{I_K}=\chi_e^{\pm 1}$
and either choice gives $\tau \simeq \tauSCnew{p}{u,2,e}$.
\end{proof}

\section{Inertial types for \texorpdfstring{$p = 3$}{elleq3}} \label{sec:inert3}

In this section, we treat the case $p=3$; see~Breuil--Conrad--Diamond--Taylor \cite{bcdt01} for the application of these types to the modularity of elliptic curves.

\subsection{Setup}

Throughout this section, we let ${\Mquad} \colonequals \Q_3(\sqrt{d})$ where $d= \pm 1,\pm 3$: when $d=1$ we have $\Mquad=\Q_3$.  Let $\calO_{\Mquad}$ be the valuation ring of ${\Mquad}$ and $\fp$ its maximal ideal.  When $\Mquad \neq \Q_3$, let $\chichar{d} \colon W_3 \to \C^\times$ be the quadratic character associated to ${\Mquad}$ and let $s \in W_3$ be a lift of the nontrivial element of $\Gal({\Mquad}\,|\,\Q_3)$. Recall (section \ref{sec:notation}) that $\epschar{-1} = \chichar{-1}|_{I_3}$ is trivial (the extension $\Q_3(\sqrt{-1}) \supseteq \Q_3$ is unramified) 
and $\epschar{3}=\chichar{\pm 3}|_{I_3}$ is the unique nontrivial quadratic character of $I_3$.  We have $\cond(\epschar{3})=3$.  For $d=-3$, let $\xi_6 \colonequals (1+\sqrt{-3})/2 \in K$ (to help distinguish it from roots of unity in $\C^\times$).

We begin with basic structural results concerning unit groups.  Let $\frakf = \frakp^f$ with $f\ge 1$, and let $(q) = \Z \cap \frakf$ with $q>0$.  Write % an inclusion $(\Z_3/c)^\times \hookrightarrow (\calO_K/\frakf)^\times$ and 
\[ \Nm=\Nm_{K|\Q_3}\colon (\calO_K/\frakf)^\times \to (\Z_3/q)^\times \]
for the norm map (the identity when $K=\Q_3$).  Let 
\begin{equation}
U_{\frakf} \colonequals 
\begin{cases}
\{1\}, &\text{if}\, K = \Q_3;\\
\Nm((\calO_{\Mquad}/\frakf)^\times), & \text{otherwise}.
\end{cases}
\end{equation}

\begin{lemma}\label{lem:grp-structures-3} 
Table~\textup{\ref{table:U-norm-map-pr-3}} % and \textup{\ref{table:U-quo-unit-grps-pr-3}} 
gives the structure and explicit generators for the groups $U_{\frakf}$ and $(\calO_{\Mquad}/\frakf)^\times/U_{\frakf}$, respectively.
\end{lemma}

\begin{table}
%\begin{center}
\normalfont\small\renewcommand{\arraystretch}{1.25}
\begin{tabular}{ >{$}c<{$} | >{$}c<{$} ||>{$}c<{$} | >{$}c<{$}|| >{$}r<{$} >{$}l<{$}  | >{$}r<{$} >{$}l<{$} }
{\Mquad} & d & \mathfrak{f} & f & \multicolumn{2}{c|}{$U_{\frakf}$} & 
\multicolumn{2}{c}{$(\calO_{\Mquad}/\frakf)^\times/U_{\frakf}$}   \\\hline\hline
\Q_3 & 1 & (3)^f & \geq 1 & \multicolumn{2}{c|}{$-$}  % \langle -4 \rangle \!\!\!\!\!&\simeq \Z/(2\cdot 3^{f-1}) 
& \langle -4 \rangle \!\!\!\!\!&\simeq \Z/(2\cdot 3^{f-1})\\\hline 
\Q_3(\sqrt{-1}) & -1 & (3)^f & \ge 1 & \langle -4 \rangle \!\!\!\!\!&\simeq \Z/(2\cdot3^{f-1}) 
& \langle \sqrt{-1} + 2\rangle \!\!\!\!\!&\simeq \Z/(4\cdot 3^{f-1}) \\\hline
\Q_3(\sqrt{3}) & 3 & \fp^f & \ge 1 & \langle 4 \rangle \!\!\!\!\!&\simeq \Z/(3^{\lfloor (f - 1)/2 \rfloor}) 
& \langle \sqrt{3} - 1\rangle \!\!\!\!\!&\simeq \Z/(2 \cdot 3^{\lfloor f/2 \rfloor}) \\\hline
\multirow{2}{*}{$\Q_3(\sqrt{-3})$} & \multirow{2}{*}{$-3$} & \multirow{2}{*}{$\fp^f$} & 1,2,3 & \langle 4 \rangle \!\!\!\!\!&\simeq \Z/(3^{\lfloor f/2 \rfloor}) & 
\langle \xi_6 - 3 \rangle \!\!\!\!\!&\simeq \Z/(2\cdot 3^{\lfloor f/2 \rfloor}) \\\cline{4-8}
& &  & \geq 4 & \langle 4 \rangle \!\!\!\!\!&\simeq \Z/(3^{\lfloor (f-1)/2 \rfloor}) & 
\langle -\xi_6 \rangle \times \langle \xi_6 - 3 \rangle \!\!\!\!\!&\simeq \Z/3 \times \Z/(2\cdot 3^{\lfloor (f-2)/2 \rfloor})\\
\end{tabular}
%\end{center}
%\vspace{0.2em}
\caption{\rule{0pt}{2.5ex}Group structure of $U_{\frakf}$ and $(\calO_{\Mquad}/\frakf)^\times/U_{\frakf}$}
\label{table:U-norm-map-pr-3}
\end{table}

\begin{proof}
Following Cohen \cite[Proof of Theorem 4.2.10]{coh00}, this can be done with a direct calculation by induction on $f$; the code is online \cite{ourcode}.
\end{proof}

\begin{remark}\label{rem:projection:3}From the last column of Table~\ref{table:U-norm-map-pr-3}, we see that
for $K = \Q_3$, $\Q_3(\sqrt{-1})$ or $\Q_3(\sqrt{3})$, the quotient $(\calO_{\Mquad}/\frakf)^\times/U_{\frakf}$
is a cyclic group. So the canonical projection 
$$(\calO_{\Mquad}/\frakf)^\times/U_{\frakf} \to (\calO_{\Mquad}/\frakf')^\times/U_{\frakf'},$$
when $\frakf'\mid \frakf$, behave as expected. For $K=\Q_3(\sqrt{-3})$,
however, this is not quite the case. Indeed, in that case, the quotient $(\calO_{\Mquad}/\frakf)^\times/U_{\frakf}$
is no longer cyclic for $\frakf$ large enough. In particular, when $\frakf = \fp^4$ and $\frakf' = \fp^3$, the projection
$$\pi : (\calO_{\Mquad}/\frakf)^\times/U_{\frakf} \to (\calO_{\Mquad}/\frakf')^\times/U_{\frakf'}$$
sends $-\xi_6 \mapsto (\xi_6-3)^4 \pmod{\frakf'}$ and  $\xi_6 - 3 \mapsto \xi_6-3 \pmod{\frakf'}$. So $\pi(-\xi_6) \ne 0$, as
one might be tempted to think from the description of the groups involved.
\end{remark}

%\begin{cor}\label{cor:U-norm-quo-3} 
%We have
%\[
%(\Z_3/c)^\times/U_\frakf = 
%\begin{cases}
%\{1\}, & \text{ if $d=\pm 1$;} \\
%\langle -1 \rangle \simeq \Z/2, & \text{ if $d=\pm 3$.}
%\end{cases} 
%\] 
%\end{cor}
%
%\begin{proof} 
%Read from Table~\ref{table:U-norm-map-pr-3}.
%\end{proof}

\begin{corollary}\label{cor:epsilon_d}
Let $\chi \colon W_3 \to \C^\times$ be a nontrivial quadratic character.  Then $\chi\spArt(4)=1$, and $\chi=\varepsilon_{-1}$ if and only if $\chi\spArt(-1)=1$.  In particular, $\chi$ is ramified if and only if $\chi\spArt(-1)=-1$.  
\end{corollary}

\begin{proof} 
By the first row of Table \ref{table:U-norm-map-pr-3}, for $\chi$ quadratic we must have $\chi\spArt(4)=1$ and so by local class field theory we have $\chi\spArt(-1)=1$ if and only if $\chi$ is unramified.
\end{proof}

\begin{corollary}\label{cor:onlyodd}
Let $\chi \colon W_K \to \C^\times$ be a character with $K$ ramified and $U_\frakf \leq \ker \chi\spArt$.  Then $\condexp(\chi)$ is even.
\end{corollary}

\begin{proof}
The bottom two rows of Table~\ref{table:U-norm-map-pr-3} show that if $f=\condexp(\chi)$ is odd, then by the floor function in fact $\chi$ factors through $(\calO_K/\frakp^{f-1})^\times$.
\end{proof}

% This follows directly from Corollary~\ref{cor:U-norm-quo-3} recalling that $\ker \epschar{d}=\Nm K^\times$.  
% \jv{There used to be an if and only if here, and something about a restriction of a quadratic character, but I didn't understand the point.  I think we know how to define the quadratic characters of $\Q_3$, in fact it's a bit weird to do that after the above, which isn't needed.  Anyway, I'll come back once the comment below is sorted.}

%\begin{corollary}\label{cor:non-bc-char}
%Let $K\,|\,\Q_3$ be one of the ramified quadratic extensions above, and $\frakf = \frakp^f$, with $k\ge 1$ integer.
%Let $U_\frakf = \Nm_{K|\Q_3}((\calO_K/\frakf)^\times)$, and $\chi: (\calO_K/\frakf)^\times \to \C^\times$ a
%character. Then, $\chi$ factors through the norm map if and only if $\chi|_{H_\frakf}$ is trivial, which is equivalent
%to saying that $\chi$ factors through the quotient $(\calO_K/\frakf)^\times/H_{\frakf}$. 
%\end{corollary}

%\begin{proof}
%This is a direct consequence of Lemma~\ref{lem:grp-structures-3} and Corollary~\ref{cor:epsilon_d}.
%\end{proof}

%\begin{remark}\label{rem:non-bc-char}\rm 
%The statement of Corollary~\ref{cor:non-bc-char} is almost a tautology. But, given the group structures provided in 
%Tables~\ref{table:U-quo-unit-grps-pr-3},~\ref{table:U-quo-unit-grps-pr-3} and~\ref{table:U-norm-map-pr-3}, it provided
%a very useful way of checking whether a character $\chi:\,W_K \to \C^\times$ factors through the norm map.
%\end{remark}

\subsection{Result}

Returning to the classification in Proposition~\ref{prop:non-prim-inert-type}, and recalling that we have already treated the case of potentially multiplicative reduction in Proposition~\ref{prop:Emult} and that the exceptional case only happens for $p=2$, in this section we classify the inertial types for elliptic curves over $\Q_3$ with potentially good reduction, giving principal series ($d=1$) or (nonexceptional) supercuspidal ($d=-1,\pm 3$) and associated to a character as in \eqref{eqn:psnot} and \eqref{eqn:scnot}, respectively.

Recall our attempt at uniform and informative notation: we write our character of $W_K$ as $\chi_{(d,f,r)}$, where~$K$ has discriminant~$d$, with $r$ the order when restricted to $I_K$ and $f$ the conductor exponent.  
We work explicitly with such characters, taking values in $\C^\times$ in terms of roots of unity denoted $\zeta_k \colonequals \exp(2\pi i/k)$ for $k \geq 1$.  For the purposes of the restriction of inertia, we only care about $\chi_{(d,f,r)}|_{I_K}$, so to spell out the type we will describe $\chi_{(d,f,r)}\spArt|_{\calO_K^\times}$.  By definition of conductor exponent, this restriction then factors through $(\calO_K/\frakf)^\times$ where $\frakf=\frakp^f$, so can be given on generators as in the previous section.  
\begin{itemize}
\item In the principal series case, $U_\frakf$ is trivial by definition. 
\item In the supercuspidal case, we are in the situation of Lemma \ref{lem:centralChar}, which reads
\begin{equation}
\chi_{(d,f,r)}\spArt|_{\Z_3^\times} = \varepsilon_d.  
\end{equation} 
So by local class field theory, $U_\frakf \leq \ker \chi_{(d,f,r)}\spArt$. 
\end{itemize}
In either case, we may specify the character by its values on the generators of $(\calO_{\Mquad}/\frakf)^\times/U_{\frakf}$ given in Table \ref{table:U-norm-map-pr-3}.  In all cases but the last row of this table, the character is uniquely determined by the data $(d,f,r)$ (up to Galois conjugation), sending the designated generator to $\zeta_r$.  For the last row, we will write $\chi_{(-3,4,6),j}$ to be the character which takes the values $\chi(-\xi_6)=\zeta_3^j$ and $\chi(\xi_6-3)=-\zeta_3^{j-1}$ for $j=0,1,2$.  We summarize this data in Table~\ref{table:Q3}.  

\begin{table}
\normalfont\small\renewcommand{\arraystretch}{1.25}
\begin{tabular}{ >{$}c<{$} | >{$}c<{$} || >{$}c<{$} | >{$}c<{$} || >{$}c<{$} | >{$}c<{$} || >{$}c<{$} | >{$}c<{$} }
{\Mquad} & d & \mathfrak{f} & f & r & \text{values of~$\chi$ on generators} & \tau & \condexp(\tau) \\
\hline\hline
\Q_3 & 1 & 3^2 & 2 & 3 & \zeta_3 &\tauPSnew{3}{1, 2, 3} & 4 \\ 
\hline
\multirow{2}{*}{$\Q_3(\sqrt{-1})$} & \multirow{2}{*}{$-1$} & (3) & 1 & 4 & \zeta_4 & \tauSCnew{3}{-1,1,4} & 2 \\
\cline{3-8} 
 & & (3)^2 & 2 & 3 & \zeta_3 & \tauSCnew{3}{-1,2,3} & 4 \\
\hline
\Q_3(\sqrt{3}) & 3 & \frakp^2 & 2 & 6 & \zeta_6 & \tauSCnew{3}{3,2,6} & 3 \\
\hline
\multirow{2}{*}{$\Q_3(\sqrt{-3})$} & \multirow{2}{*}{$-3$} & \frakp^2 & 2 & 6 & \zeta_6 & \tauSCnew{3}{-3,2,6} & 3 \\
\cline{3-8}
& & \frakp^4 & 4 & 6 & \zeta_3^j,\, -\zeta_3^{j-1}  & \tauSCnew{3}{-3,4,6}_j\ (j=0,1,2) & 5 \\
\end{tabular}
\caption{\rule{0pt}{2.5ex}Nonspecial inertial types for $\Q_3$}
\label{table:Q3}
\end{table}

\begin{proposition} \label{P:elleq3prop}
Let $E$ be an elliptic curve over $\Q_3$ of conductor $\NE$ and inertial type $\tau_E$.  Suppose that $E$ has additive, potentially good reduction and semistability defect $e \geq 3$.  Then $\tau_E$ is given by one of the following cases:
\begin{enumalph}
  \item If $\NE = 3^2$, then $e=4$ and $\tau_E \simeq  \tauSCnew{3}{-1,1,4}$.
  \item If $\NE = 3^3$, then $e=12$ and $\tau_E \simeq \tauSCnew{3}{3,2,6}$ or $\tau_E \simeq \tauSCnew{3}{-3,2,6}$.
  \item If $\NE = 3^4$, then $e=3,6$, and:
  \begin{enumromanii}
  \item If $e=3$, then $\tau_E \simeq  \tauPSnew{3}{1,2,3}$ or $\tau_E \simeq \tauSCnew{3}{-1,2,3} $;  % \jv{these cases are distinguished by being potentially ordinary or supersingular!} \nuno{I am not sure. I think that the curve is always potentially supersingular, even when the representation is reducible. See Theorem 3.2 in [13]}
  \item If $e=6$, then $\tau_E \simeq  \tauPSnew{3}{1,2,3} \otimes \epschar{3}$ or $\tau_E \simeq \tauSCnew{3}{-1,2,3}  \otimes \epschar{3}$.
  \end{enumromanii}
  \item If $\NE = 3^5$, then $e=12$ and $\tau_E \simeq   \tauSCnew{3}{-3,4,6}_j$ with $j=0,1,2$.
\end{enumalph}
\end{proposition}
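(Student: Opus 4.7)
The plan is to enumerate, for each semistability defect $e \in \{3, 4, 6, 12\}$ permitted by Lemma~\ref{L:phi}, all inertial types compatible with $E/\Q_3$, and then match conductors to cases (a)--(d). Several uniform constraints simplify the analysis: since $E$ has potentially good reduction we have $N = 0$; since $\ell = 3$, no exceptional supercuspidal types occur, so Proposition~\ref{prop:non-prim-inert-type} leaves only principal series and nonexceptional supercuspidals induced from one of the three quadratic extensions $\Q_3(\sqrt{d})$ with $d \in \{-1, \pm 3\}$. Using a $p$-adic realization for $p \neq 3$, the determinant $\det \rho_E$ is the cyclotomic character, unramified at $3$, so $\det \tau$ is trivial on $I_3$; for PS this forces $\tau \simeq \chi|_{I_3} \oplus \chi^{-1}|_{I_3}$, and for SC induced from $K = \Q_3(\sqrt{d})$, Lemma~\ref{lem:centralChar} gives $\chi\spart|_{\Z_3^\times} = \epschar{d}$.

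I would first handle $e = 12$, where $\Phi \simeq \Z/3\Z \rtimes \Z/4\Z$ is nonabelian, forcing $\tau$ to be irreducible (SC). Induction from the unramified $K = \Q_3(\sqrt{-1})$ restricts on inertia to $\chi|_{I_3} \oplus \chi^s|_{I_3}$, whose image is abelian, so this option is ruled out; hence $K = \Q_3(\sqrt{d})$ is ramified, $d = \pm 3$. By formula~\eqref{E:condBC}, $\condexp(\tau) = \condexp(\chi) + 1$, and Lemma~\ref{L:condBound} gives $\condexp(\chi) \leq 4$. Using Lemma~\ref{lem:grp-structures-3} and Corollary~\ref{cor:epsilon_d} to enumerate characters with $\chi\spart|_{\Z_3^\times} = \epschar{d}$ of appropriate order on $I_K$, I expect conductor $\fp^2$ to yield the two types $\tauSCnew{3}{\pm 3, 2, 6}$ with $\NE = 3^3$, and conductor $\fp^4$ to yield the three types $\tauSCnew{3}{-3, 4, 6}_j$ with $\NE = 3^5$ (the index $j = 0,1,2$ reflecting the extra $\Z/3\Z$ factor in the last line of Table~\ref{table:U-quo-unit-grps-pr-3} for $\Q_3(\sqrt{-3})$); the remaining conductors $\fp$ and $\fp^3$ should produce no $e = 12$ type compatible with the central character condition.

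For the abelian cases $e \in \{3, 4, 6\}$, the cyclic image forces $\tau$ to be PS or SC induced from the unramified $K = \Q_3(\sqrt{-1})$, since SC from ramified $K$ produces a non-abelian restriction $\Ind_{I_K}^{I_3}(\chi|_{I_K})$ whenever $\chi|_{I_K} \neq \chi^s|_{I_K}$ (which is automatic by irreducibility). For $e = 4$: PS is excluded as $(\Z_3 / 3^k)^\times \simeq \Z/2\Z \times \Z/3^{k-1}\Z$ has no element of order $4$; the SC route with $\chi$ of order $4$ on $I_K$ has minimal conductor $\fp$ (first line of Table~\ref{table:U-quo-unit-grps-pr-3}), giving $\condexp(\tau) = 2$ and $\tau \simeq \tauSCnew{3}{-1, 1, 4}$. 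For $e = 3$: both PS (with $\chi$ of order $3$ and conductor $3^2$) and SC (with $\chi$ of order $3$ on $I_K$ of conductor $\fp^2$) give $\condexp(\tau) = 4$, yielding $\tauPSnew{3}{1,2,3}$ and $\tauSCnew{3}{-1,2,3}$. For $e = 6$: every order-$6$ character of the relevant abelianization decomposes uniquely as an order-$3$ character times $\epschar{3}$ (using $\Z_3^\times = \{\pm 1\} \times (1 + 3\Z_3)$ and the analogous decomposition for $\calO_K^\times$), so each $e = 6$ type is the twist by $\epschar{3}$ of an $e = 3$ type, giving $\tauPSnew{3}{1,2,3} \otimes \epschar{3}$ and $\tauSCnew{3}{-1,2,3} \otimes \epschar{3}$ of conductor $3^4$.

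The main obstacle will be the careful character enumeration at $e = 12$ with $\condexp(\chi) = 4$, where the three types $\tauSCnew{3}{-3, 4, 6}_j$ must be distinguished using the extra $\Z/3\Z$ factor in the unit-group quotient for $\Q_3(\sqrt{-3})$ at $\fp^4$, while confirming that $\Q_3(\sqrt{3})$ contributes nothing at this conductor. A secondary subtlety is verifying that the triply-imprimitive situation of Proposition~\ref{prop:triply-imprimitive} does not cause double-counting among types induced from different quadratic extensions.
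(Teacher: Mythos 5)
Your proposal is correct in its structure and reaches the same case analysis as the paper, but the organizing principle is genuinely different: you stratify by the semistability defect $e$ (via Lemma~\ref{L:phi}) and use the abelian/cyclic-versus-nonabelian structure of $\Phi$ to decide whether $\tau$ is principal series, an unramified quadratic induction, or a ramified quadratic induction, whereas the paper stratifies by $\ord_3(\NE)$ and invokes the parity of $\condexp(\tau)$ (odd conductor exponent $>1$ implies induction from a ramified quadratic extension, via~\cite{bh14}). Your replacement of the parity criterion is a clean observation: for ramified $K\,|\,\Q_3$ and $\chi^s|_{I_K}=\chi^{-1}|_{I_K}$ (forced by Lemma~\ref{lem:centralChar}), the condition $\chi|_{I_K}\neq\chi^s|_{I_K}$ makes $\Ind_{I_K}^{I_3}(\chi|_{I_K})$ have dihedral image of order $\geq 6$, which can never be the cyclic $\Phi$ of Lemma~\ref{L:phi}(i); conversely, the nonabelian $e=12$ image cannot arise from $\chi|_{I_3}\oplus\chi^s|_{I_3}$ over the unramified extension. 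This buys you a uniform a priori dichotomy across all of $e\in\{3,4,6,12\}$ and makes the triply-imprimitive concern moot for the cyclic cases (a type with reducible $\tau|_{I_3}$ cannot simultaneously have irreducible $\tau|_{I_3}$), which you correctly flag as needing a word. What remains to be made rigorous is exactly what you name: the enumeration over $\Q_3(\sqrt{\pm 3})$ at conductor exponent $\condexp(\chi)\in\{3,4\}$ — showing $\condexp(\chi)=3$ gives nothing compatible with the central-character constraint, that $\Q_3(\sqrt{3})$ contributes nothing at $\condexp(\chi)=4$ (the paper does this via the order-$9$-or-$18$ obstruction against $e=12$), and the splitting into the three classes $\tauSCnew{3}{-3,4,6}_j$ from the extra $\Z/3$ factor in Table~\ref{table:U-quo-unit-grps-pr-3}; these are the same computations the paper carries out, and your outline points to the right tables and lemmas to do them.
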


\begin{proof}
We drop subscripts, writing e.g.\ $\tau=\tau_E$.  We have $N_E = 3^k$ with $2 \leq k=\condexp(\tau) \leq 5$ by Lemma~\ref{L:condBound}. 

We begin with (a), and suppose $k=2$.  Then we are in the case of tame reduction, and $\Phi$ is cyclic of order $e=4$ by Lemma~\ref{L:phi}.  We cannot have $\tau$ principal series, since then $\tau \simeq \chi|_{I_3} \oplus \chi^{-1}|_{I_3}$  with $\chi$ a character of $W_3$ of conductor exponent $1$ and order $4$, which does not exist.  So $\tau$ must be (nonexceptional) supercuspidal.  Since $e = 4 \mid (3 + 1)$, we conclude that $\tau \simeq  \tauSCnew{3}{-1,1,4}$ by a similar argument as in the proof of Proposition~\ref{P:peq5}(b).   

Next, we prove part (b) and (d) and suppose $k=3$ or $k=5$.   Since $k>1$ is odd, by \eqref{E:condBC}, $\tau$ is obtained by induction of a character $\chi$ from a ramified quadratic extension ${\Mquad} \supset \Q_3$ with $d=\pm 3$ and $m=\condexp(\chi) + v_3(d)=\condexp(\chi)+1$.  In particular, we are in case (iv) of Proposition~\ref{prop:non-prim-inert-type}: $\tau$ is irreducible and in this case it is necessary and sufficient for $(\chi|_{I_K})\spArt$ not to factor via the norm.  Hence, $e=12$ by
Corollary~\ref{cor:notfactorbyinertia}.
This gives three cases:
\begin{itemize} 
\item Suppose $k=3$, so $\condexp(\chi)=2$.  By Lemma~\ref{lem:grp-structures-3}, we have 
$$(\calO_{\Mquad}/\fp^2)^\times/U_{\frakf}  = \langle u \rangle \simeq \Z/6,$$ 
where $u = \sqrt{3} - 1$ for $d=3$ and $u=\xi_6 - 3$ for $d=-3$.  
Since $\chi$ is primitive with $\condexp(\chi)=2$, we must have $\chi(u)=\pm\zeta_3^j$ with~$j=1, 2$.
Applying Corollary~\ref{cor:epsilon_d} to $\chi|_{\Z_3^\times} =\varepsilon_{d}$ we conclude that $\chi(-1)=-1$ and hence $\chi(u)=-\zeta_3^j$ with $j=1$ or $j=2$, which are Galois conjugate.  Note that $\chi$ does not factor via the norm by Corollary~\ref{cor:factorvianorm}(a), so the type indeed occurs.  We obtain the same induction from either character, so we may take $\chi|_{I_{\Mquad}}=\chi_{(d, 2, 6)}|_{I_{\Mquad}}$, giving $\tau \simeq \tauSCnew{3}{d,2,6}$.

\item Next, suppose that $k=5$ and $d = 3$.  By Lemma~\ref{lem:grp-structures-3}, $\chi|_{I_K}$ is a nontrivial character on $$(\calO_{\Mquad}/\fp^4)^\times/U_{\frakf}  = \langle u \rangle \simeq \Z/18.$$
The order of $\chi(u)$ is not a divisor of~$6$, otherwise $\chi$ would be imprimitive (i.e., $\condexp(\chi) \leq 3$); thus
$\chi|_{I_K}$ has order $9$~or~$18$ and so $9 \mid e=12$, a contradiction.  Therefore this case does not occur.

\item Finally, suppose $k=5$ and $d=-3$.  Now by Lemma~\ref{lem:grp-structures-3} we have a character on
$$(\calO_{\Mquad}/\fp^4)^\times/U_{\frakf}  =  \langle u_1 \rangle \times \langle u_2 \rangle \simeq \Z/3 \times \Z/6$$  
where $u_1 = -\xi_6$ and $u_2 = \xi_6 - 3$.
The primitivity condition here is more subtle. Indeed, if $\chi$ is imprimitive then $\chi = \theta \circ \pi$ for some character~$\theta$
with $\condexp(\theta) \leq 3$, where $\pi$ is the projection defined in Remark~\ref{rem:projection:3}.
Since $(\calO_{\Mquad}/\fp^3)^\times/U_{\frakf} \simeq \Z/6$
we conclude that
$\chi(u_1 u_2^2) = \theta (u_2^6) = 1$. Thus $\chi$ is primitive if and only if $\chi(u_1) \neq \chi(u_2)^{-2}$.
Furthermore, Corollary~\ref{cor:epsilon_d} applied to $\chi|_{\Z_3^\times} =\varepsilon_{d}$ gives $\chi(-1) = \chi(u_2^3) = -1$ and so $\chi|_{I_K}$ does not factor through the norm by Corollary~\ref{cor:factorvianorm}(a). We conclude that $\chi(u_2) = \zeta_6^i = (-1)^i\zeta_3^{2i}$ with $i=1,3,5$ and the constraints on $\chi(u_1)$ follow from the primitivity condition.
More precisely, up to replacing $\chi$ by its Galois conjugate (which coincides with $\chi^{-1}$ on inertia), there are three possible characters defined by
\[
 \chi(u_1) = \zeta_3^j \quad \text{ and } \quad \chi(u_2) = -\zeta_3^{j-1} \quad \text{ for } j = 0, 1, 2.
\]
Therefore, we can choose $\chi = \chi_{(-3, 4, 6)_j}$  for $j=0,1,2$. Thus $\tau \simeq   \tauSCnew{3}{-3,4,6}_j$.
\end{itemize}

We conclude by proving (c).  Suppose $k=4$.  
\begin{itemize}
\item First, suppose that $\tau$ is principal series. Then $\tau = \chi|_{I_3} \oplus \chi^{-1}|_{I_3}$, where $\chi$ is a character of $W_3$ with conductor exponent $2$. 
By Lemma~\ref{lem:grp-structures-3}, we have that $\chi|_{I_3}$ factors through 
$$(\Z_3/(3^2))^\times \simeq \langle -4\rangle \simeq \Z/6.$$
Since~$\chi$ is primitive with $\condexp(\chi)=2$, we have $\chi(4)=\zeta_3^{\pm 1}$. Twisting by $\epschar{3}$, we can assume that $\chi(-1)=1$. 
Thus, there are two possibilities for $\chi|_{I_3}$, which are $\chi_{(1,2,3)}$ and $\chi_{(1,2,3)}^{-1}$. Thus $\tau \simeq  \tauPSnew{3}{1,2,3}$ (hence $e=3$)
or $\tau \simeq  \tauPSnew{3}{1,2,3} \otimes \epschar{3}$ (hence $e=6$).
\item We are left with the case where $\tau$ is supercuspidal.  We first claim that $K$ is unramified: indeed, if $K$ is ramified, then again from \eqref{E:condBC} we have $4=\condexp(\chi)+\condexp(\psi_K)=\condexp(\chi)+1$ so $\condexp(\chi)=3$.  But this contradicts Corollary \ref{cor:onlyodd}.  Thus $K$ is unramified, so $d=-1$, and $\condexp(\chi)=2$ by \eqref{E:condBC}.  Reading Lemma \ref{lem:grp-structures-3} one last time, we have $\chi|_{I_K}$ on
$$\left(\calO_{{\Mquad}}/(3^2)\right)^\times/U_{\frakf} = \langle u \rangle \simeq \Z/12 $$
where $u = \sqrt{-1} + 2$. By Corollary~\ref{cor:epsilon_d}, $\chi(4) = \chi(-1) = 1$.  By primitivity, $\chi(u)$ must have order $3$, $6$, or $12$. However, by Lemma~\ref{L:phi}, the case $e=12$ only arises for nonabelian inertia, which is not possible as $K \supset \Q_3$ is unramified. 
Thus $\chi(u) = \zeta_6^j$, with $j \in \{1,2,4,5\}$, which give two pairs of conjugate characters.  Note that Corollary~\ref{cor:factorvianorm}(b) implies that indeed $\chi$ does not factor through the norm. 

To finish, we recognize these two as twists.  We note that $\chi' = \chi \cdot (\varepsilon_{3}|_{W_K})$ is also a character of~$W_K$ with conductor exponent~$2$ with $\delta(g)$ having order~$3$ or~$6$, and moreover
\[
 \delta(u) = \chi(u)\chi_3(\Nm_{K|\Q_3}(u)) = \chi(u)\eps_3(u^2) = \chi(u) = 1 \]
for all $u \in \Z_3^\times$.  Therefore, $\chi'|_{\calO_K^\times}$ must be one of the previous four characters.
Thus, up to twisting by $\epschar{3}$, we can assume that $\chi(g) = \zeta_6^j$, for $j=2,4$, which is the same as requiring that $\chi(g) = \zeta_3^{\pm 1}$ which are now conjugate.  Thus we can take $\chi|_{I_K} = \chi_{(-1,2,3)}$, and either $\tau \simeq \tauSCnew{3}{-1,2,3}$ with $e=3$
or $\tau \simeq \tauSCnew{3}{-1,2,3} \otimes \varepsilon_3$ with $e=6$.
\end{itemize}

The proof is then complete by exhaustion of cases.
\end{proof}

\subsection{Explicit realization} \label{sec:exm3}

In Table~\ref{table:types-Q3}, we give an example of a curve realizing each inertial type we computed over $\Q_3$ for the case of potentially good reduction; in particular those described in Proposition~\ref{P:elleq3prop} and Table~\ref{table:Q3}.  Additional columns are explained below.  

\begin{table}
\normalfont\small\renewcommand{\arraystretch}{1.25}
\begin{tabular}{ >{$}c<{$} || >{$}c<{$} | >{$}c<{$} | >{$}c<{$} | >{$}c<{$} | >{$}c<{$} || >{$}c<{$} }
\tau & e & \condexp(\tau) & \Nm_{L|K}(\calO_L^\times)/U_\frakf & L' & \Gal(L\,|\,\Q_3) & E \\
\hline\hline
\text{trivial} &  1 & 0 & - & \LMFDBL{3.1.0.1} & \LMFDBG{1T1} \simeq C_1 & \LMFDBE{11a1} \\
\hline
\epschar{3} &  2  & 2 & - & \LMFDBL{3.1.0.1} & \LMFDBG{2T1} \simeq C_2 & \LMFDBE{99d2} \\
\hline
\tauPSnew{3}{1, 2, 3} & 3 & 4 & - & \LMFDBL{3.3.4.2} & \LMFDBG{3T1} \simeq C_3 & \LMFDBE{162b1} \\
\hline
\tauSCnew{3}{-1, 2, 3} &  3 & 4 & \langle 3 \rangle \simeq \Z/4 & \LMFDBL{3.3.4.4} & \LMFDBG{3T2} \simeq S_3 & \LMFDBE{162d1} \\
\hline
\tauSCnew{3}{-1,1,4} &  4 & 2 & \textup{trivial} & \LMFDBL{3.4.3.1} & \LMFDBG{4T3} \simeq D_4 & \LMFDBE{36a1} \\
\hline
\tauPSnew{3}{1, 2, 3} \otimes \epschar{3} &  6 & 4 & -& \LMFDBL{3.6.9.11} & \LMFDBG{6T1} \simeq C_6 & \LMFDBE{162c2} \\
\hline
\tauSCnew{3}{-1, 2, 3} \otimes \epschar{3} &  6  & 4 & \langle 6 \rangle \simeq \Z/2 & \LMFDBG{3.6.9.12} & \LMFDBG{6T3} \simeq D_6 & \LMFDBE{162a2} \\
\hline
\tauSCnew{3}{3,2,6} &  12 & 3 & \textup{trivial}  & \LMFDBL{3.12.15.1} & \LMFDBG{12T15} \simeq 2\cdot D_6 & \LMFDBE{27a1} \\
\hline
\tauSCnew{3}{-3,2,6} &  12 & 3 & \textup{trivial}  & \LMFDBL{3.12.15.12} & \LMFDBG{12T13} \simeq 2\cdot D_6  & \LMFDBE{54a1} \\
\hline
\tauSCnew{3}{-3,4,6}_0 &  12  & 5 & \langle (1,0) \rangle \simeq \Z/3  & \LMFDBL{3.12.23.122} & \LMFDBG{12T13} \simeq 2\cdot D_6 & \LMFDBE{972b1} \\
\hline
\tauSCnew{3}{-3,4,6}_1 &  12 & 5 & \langle (0, 2)\rangle\simeq \Z/3 & \LMFDBL{3.12.23.20} & \LMFDBG{12T13} \simeq 2\cdot D_6 & \LMFDBE{243b1} \\
\hline
\tauSCnew{3}{-3,4,6}_2 &  12 & 5 & \langle (1, 4)\rangle \simeq \Z/3 & \LMFDBL{3.12.23.14} & \LMFDBG{12T13} \simeq 2\cdot D_6 & \LMFDBE{243a1} 
\end{tabular}
\vspace{0.2em}
\caption{\rule{0pt}{2.5ex}Types, defining fields, and elliptic curves realizing each inertial type over $\Q_3$ in the case of potentially good reduction}
\label{table:types-Q3}
\end{table}

To this end, let $\tau$ be an inertial type.  Let $L' \supseteq \Q_p$ be an extension of minimal degree such that $\Q\spun_p L'$ is the extension of~$\Q\spun_p$ cut out by $\tau$.  If $[L':\Q_p] = [\Q\spun_p L' : \Q\spun_p]$, then we call $L'$ a \defi{descent} of the inertial field.  Let $L$ be the normal closure of $L'$.  When $\tau$ is a non-exceptional supercuspidal type, there is a quadratic subfield $K \subset L$ and a character $\chi\colon K^\times \to \C^\times$ such that,
by class field theory,
\begin{equation}
\ker \chi^A = \Nm_{L_\tau|K}(L_\tau^\times) \subset K^\times
\end{equation}
and the restriction to inertia
$\chi|_{I_K}$ satisfies
\begin{equation}
\ker (\chi^A|_{(\calO_K/\frakf)^\times/U_\frakf}) = \Nm_{L_\tau|K}(\calO_{L_\tau}^\times)/U_\frakf \hookrightarrow (\calO_K/\frakf)^\times/U_\frakf.
\end{equation}

\begin{proposition} \label{prop:3eachfield}
For $p=3$ and each inertial type $\tau$ arising from an elliptic curve with additive, potentially good reduction, there is a unique descent $L'$ of the inertial field.  Moreover, either $L'=L$ is Galois or the compositum $L=\Q_9 L'$ is Galois, where $\Q_9$ is the quadratic unramified extension of $\Q_3$.
\end{proposition}

\begin{proof}
There are $12$ such types; we can exclude the trivial case leaving $11$.  Accompanying code is available online \cite{ourcode}.

By search, we find a list of $11$ elliptic curves $E$ over $\Q_3$ and a list of $11$ nonisomorphic totally ramified extensions $L'$ such that for each elliptic curve $E$ there is a unique field $L'$ of minimal degree such that $E_{L'}$ has good reduction.  (Indeed, most curves obtain good reduction over exactly one listed field; some obtain good reduction over two or more fields but only one with minimal degree.)

We verify the second statement, that the fields are either Galois over $\Q_3$ or become so after taking the compositum with the quadratic unramified extension; we then verify that the extensions are pairwise nonisomorphic as extensions of $\Q_9$ hence as extensions of $\Q\spun_3$.  

The fields $L'$ are listed in Table~\ref{table:types-Q3} by LMFDB label.  Moreover, for each nonabelian Galois closure $L \supseteq \Q_3$ of a $L'$ in our list,
we computed the image of $\Nm_{L|K}(\calO_{L}^\times)/U_\frakf$ inside $(\calO_K/\frakf)^\times/U_\frakf$ where the latter is given by the group structure and generators in Table~\ref{table:U-norm-map-pr-3}.

Given that we have a distinct list of fields and a list of types with the same cardinality,  the result follows.  
\end{proof}

To finish the proof of Table~\ref{table:types-Q3} we need to match the descent fields with the types.  Since the curve \LMFDBE{11a1} has good reduction at $3$ and its quadratic twist by~$3$ is the curve \LMFDBE{99d2}, the first two rows follow immediately.  The principal series types correspond by construction to abelian extensions~$L=L' \supseteq \Q_3$. Thus $\tauPSnew{3}{1,2,3}$ corresponds to the unique cyclic extension in the list with ramification $e=3$
and $\tauPSnew{3}{1,2,3} \otimes \epschar{3}$ to the unique cyclic extension with $e=6$.

We are then left with the supercuspidal types.  The fields $L'$ are totally ramified and their Galois closure is obtained by the compositum with at most an unramified quadratic extension, so the computed norm groups $\Nm_{L|K}(\calO_L^\times)/U_\frakf$ uniquely identify it as an extension of $K$ by local class field theory, and we may freely pass between $L$ and its normal closure.  To select the quadratic subextension $K \subseteq L$ and the conductor~$\frakf$ for the calculation of the norm groups, we used the following two facts:
\begin{enumerate}
\item The conductor of a type $\tau$ fixing $L_\tau$ matches the conductor of the curve that corresponds to~$L_\tau$. 
\item The description of the types in the list with a fixed conductor gives the candidates quadratic subextensions~$K \subset L$; for example, Proposition~\ref{P:elleq3prop}(d) implies that fields corresponding to curves of conductor $3^5$ are obtained from characters of $K =\Q_3(\sqrt{-3})$ (even though $\Q_3(\sqrt{3})$ and $\Q_3(\sqrt{-1})$ are also contained in those fields).
\end{enumerate}
Comparing the output of this calculation to the definition of the characters in Table~\ref{table:Q3} completes the proof of all the rows except for with $\condexp(\tau_E)=3$. This is because the fields corresponding to \LMFDBE{27a1} and \LMFDBE{54a1} both contain the two quadratic fields $\Q_3(\sqrt{\pm 3})$ and there is one type of conductor $3^3$ induced from each of these quadratic fields, namely $\tauSCnew{3}{\pm 3, 2,6}$.  So to finish, we observe that for $\tauSCnew{3}{\pm 3, 2,6}$ the
induced characters have order~$6$ and $(\calO_K/\frakf)^\times/U_\frakf \simeq \Z/6$, hence the norm group must be trivial 
as per the table. Further, this norm group can only occur if we are taking norms towards a quadratic field from which the type can be induced from. Thus if the trivial norm group occurs also for the other quadratic field then the type must be triply imprimitive by Proposition~\ref{prop:triply-imprimitive}.  But this is not the case because its Galois group
is isomorphic to \href{https://beta.lmfdb.org/Groups/Abstract/24.8}{\textsf{24.8}} of order $24$, a group which also goes by the names
\[ 2 \cdot D_6 \simeq (C_6 \times C_2) : C_2 \simeq C_3 : D_4; \] 
however, this group has center $C_2$ and the quotient modulo center has order $12$, hence the projectivization of its image cannot be isomorphic to $D_2 \simeq C_2 \times C_2$.

Having realized types explicitly this way, we obtain the following corollary.  

\begin{corollary} \label{cor:algQ3}
Let $E$ be an elliptic curve over $\Q_3$ with potentially good reduction.  Then there is a unique field $L'$ in Table~\ref{table:types-Q3} of minimal degree such that $E$ obtains good reduction over $L'$, and $\tau_E$ is given by the type that corresponds to this $L'$.
\end{corollary}

Code implementing Corollary~\ref{cor:algQ3} is available online \cite{ourcode}.

\section{Nonexceptional inertial types at \texorpdfstring{$p=2$}{elleq2}} \label{sec:inert2nonexcept}

In this section, we begin our consideration of inertial types for the case $p=2$.  We treat all inertial types but for the exceptional supercuspidal types, leaving the latter for section~\ref{S:exceptionalThm}.  The outline of the argument is the same as in the case $p=3$, just with more technical complications.

\subsection{Setup and statement of result}

In this section, we let ${\Mquad}=\Q_2(\sqrt{d})$ be one of the eight at most quadratic extensions of~$\Q_2$,  so $d=1,-4,5,\pm 8,-20,\pm 40$.
The unique nontrivial unramified extension has $d=5$; the remaining nontrivial extensions have conductor exponent $2$ or $3$.
% We write $\calO_{\Mquad}$ for the ring of integers 
% of~${\Mquad}$, and $\fp$ for the prime of ${\Mquad}$. 
As before, when $K \neq \Q_2$ let $s \in W_{2}$ be a lift of the non-trivial element of $\Gal({\Mquad}\,|\,\Q_2)$.
% Let $\epschar{d}$ be the restriction to the inertia subgroup $I_2 \subset W_2$ of the character of $W_2$ fixing $K=\Q_2(\sqrt{d})$, so $\epschar{d}$ is nontrivial except for $d=5$. 

As in the case $p=3$, we will need to know the quadratic character $\epschar{d}$ on inertia explicitly, as follows.
\begin{itemize}
 \item We have $\epschar{-4}=\epschar{-20}$, with $\condexp(\epschar{-4})=2$ and its restriction $\epschar{-4}\spArt|_{\Z_2^\times}$ factors through $(\Z_2/2^2 \Z_2)^\times$ and
 \[
  \epschar{-4}(-1) = -1. 
 \]
 \item We have $\epschar{8}=\epschar{40}$, with $\condexp(\epschar{8})=3$ and similarly $\epschar{8}$ is defined on $(\Z_2/2^3\Z_2)^\times = \langle -1 \rangle \times \langle 5 \rangle \simeq \Z/2 \times \Z/2$ by
 \[
  \epschar{8}(-1) = 1, \quad \epschar{8}(5) = -1. 
 \]
 \item We have $\epschar{-8}=\epschar{8}\epschar{-1}=\epschar{-40}$, with $\condexp(\epschar{8})=3$ and 
 \[
  \epschar{-8}(-1) = -1, \quad \epschar{-8}(5) = -1. 
 \]
\end{itemize}

As in section~\ref{sec:inert3}, we begin by setting up the structures of finite quotients of unit groups.  We adopt the same notation: $\frakf = \frakp^f$ with $f\ge 1$ and $q = \Z_2 \cap \frakf$, with $\Nm=\Nm_{K|\Q_2} \colon (\calO_K/\frakf)^\times \to (\Z_2/q)^\times$ the norm map, and finally 
\begin{align*}
U_{\frakf} \colonequals 
\begin{cases}
\{1\}, &\text{if}\, K = \Q_2;\\
\Nm((\calO_{\Mquad}/\frakf)^\times), & \text{otherwise}.
\end{cases}
\end{align*}

\begin{lemma}\label{lem:grp-structures-pr-2}
Tables \textup{\ref{table:norm-grps-pr-2}}, \textup{\ref{table:U-quo-unit-grps-pr-2}}, and \textup{\ref{table:Z2-quo-unit-grps-pr-2}} give the structure and generators for the groups $U_{\frakf}$, $(\calO_K/\frakf)^\times/U_\frakf$, and $(\Z_2/q)^\times/U_\frakf$, respectively.
\end{lemma}

\begin{table}
\normalfont\small\renewcommand{\arraystretch}{1.25}
\begin{tabular}{ >{$}c<{$} | >{$}c<{$} || >{$}c<{$} | >{$}c<{$} || >{$}r<{$} >{$}l<{$} }
{\Mquad} & d & \mathfrak{f} & f & \multicolumn{2}{c}{$U_{\frakf}$} \\\hline\hline
\Q_2 & 1 & (2)^f & \geq 1 & \multicolumn{2}{c}{$-$} \\\hline
\multirow{3}{*}{$\Q_2(\sqrt{5})$} & \multirow{3}{*}{$5$} & (2) & 1 & \multicolumn{2}{c}{trivial} \\
 & & (2)^2 & 2 & \langle -1 \rangle \!\!\!\!\!&\simeq \Z/2 \\
 & & (2)^f & \geq 3 & \langle -1 \rangle \times \langle 3 \rangle \!\!\!\!\!&\simeq \Z/2 \times \Z/2^{f-2} \\ \hline

\multirow{2}{14ex}{\centering $\Q_2(\sqrt{m})$, \\ $m=-1,-5$} & \multirow{2}{*}{$-4,-20$} & \frakp^f & 1,2,3,4 & \multicolumn{2}{c}{trivial} \\
 & & \frakp^f & \geq 5 & \langle -3 \rangle \!\!\!\!\!&\simeq \Z/2^{\lfloor (f-3)/2 \rfloor} \\\hline

\multirow{3}{14ex}{\centering $\Q_2(\sqrt{m})$, \\ $m=2,10$} & \multirow{3}{*}{$8,40$} & \frakp^f & 1,2 & \multicolumn{2}{c}{trivial} \\
 & & \frakp^f & 3,4,5,6 & \langle -1 \rangle \!\!\!\!\!&\simeq \Z/2 \\
 & & \frakp^f & \geq 7 & \langle -1 \rangle \times \langle -9 \rangle \!\!\!\!\!&\simeq \Z/2 \times \Z/2^{\lfloor (f-5)/2 \rfloor} \\\hline

\multirow{3}{14ex}{\centering $\Q_2(\sqrt{m})$, \\ $m=-2,-10$} & \multirow{3}{*}{$-8,-40$} & \frakp^f & 1,2 & \multicolumn{2}{c}{trivial} \\
 & & \frakp^f & 3,4,5,6 & \langle 3 \rangle \!\!\!\!\!&\simeq \Z/2 \\
 & & \frakp^f & \geq 7 & \langle 3 \rangle \!\!\!\!\!&\simeq \Z/2 \times \Z/2^{\lfloor (f-3)/2 \rfloor} \\
\end{tabular}
\caption{\rule{0pt}{2.5ex}Group structure of $U_{\frakf}$}
\label{table:norm-grps-pr-2}
\end{table}

\begin{table}
\normalfont\small\renewcommand{\arraystretch}{1.25}
\begin{tabular}{ >{$}c<{$} | >{$}c<{$} || >{$}c<{$} | >{$}c<{$} || >{$}r<{$} >{$}l<{$} }
{\Mquad} & d & \mathfrak{f} & f & \multicolumn{2}{c}{$(\calO_K/\frakf)^\times/U_{\frakf}$} \\\hline\hline
\multirow{3}{*}{$\Q_2$} & \multirow{3}{*}{$1$} & (2) & 1 & \multicolumn{2}{c}{trivial} \\
& & (2)^2 & 2 & \langle -1 \rangle \!\!\!\!\!&\simeq \Z/2 \\
& & (2)^f & \geq 3 & \langle -1 \rangle \times \langle 5 \rangle \!\!\!\!\!&\simeq \Z/2 \times \Z/2^{f-2} \\
\hline

\multirow{2}{*}{$\Q_2(\sqrt{5})$} & \multirow{2}{*}{$5$} & \multirow{2}{*}{$(2)^f$} & 1,2 & \langle (-1+\sqrt{5})/2 \rangle \!\!\!\!\!&\simeq \Z/(3 \cdot 2^{f-1}) \\
 & & & \geq 3 & \langle \sqrt{5} \rangle \times \langle (-1+\sqrt{5})/2 \rangle \!\!\!\!\!&\simeq \Z/2 \times \Z/(3 \cdot 2^{f-2}) \\ \hline

\multirow{3}{14ex}{\centering $\Q_2(\sqrt{m})$, \\ $m=-1,-5$} & \multirow{3}{*}{$-4,-20$} & \frakp & 1 & \multicolumn{2}{c}{trivial} \\
 & & \frakp^2 & 2 & \langle \sqrt{m} \rangle \!\!\!\!\!&\simeq \Z/2 \\
 & & \frakp^f & \geq 3 & \langle \sqrt{m} \rangle \times \langle 2\sqrt{m}-1 \rangle \!\!\!\!\!&\simeq \Z/4 \times \Z/2^{\lfloor (f-2)/2 \rfloor} \\\hline

\multirow{2}{14ex}{\centering $\Q_2(\sqrt{m})$, \\ $m=\pm 2,\pm 10$} & \multirow{2}{*}{$\pm 8,\pm 40$} & \multirow{2}{*}{$\frakp^f$} & 1,2,3,4 & \langle \sqrt{m}-1 \rangle \!\!\!\!\!&\simeq \Z/2^{\lfloor f/2 \rfloor} \\
 & & & \geq 5 & \langle -3 \rangle \times \langle \sqrt{m}-1 \rangle \!\!\!\!\!&\simeq \Z/2 \times \Z/2^{\lfloor f/2 \rfloor}\\
\end{tabular}
\caption{\rule{0pt}{2.5ex}Group structure of $(\calO_{\Mquad}/\frakf)^\times/U_{\frakf}$}
\label{table:U-quo-unit-grps-pr-2}
\end{table}

\begin{table}
\normalfont\small\renewcommand{\arraystretch}{1.25}
\begin{tabular}{ >{$}c<{$} | >{$}c<{$} || >{$}c<{$} | >{$}c<{$} || >{$}r<{$} >{$}l<{$} }
{\Mquad} & d & \mathfrak{f} & f & \multicolumn{2}{c}{$(\Z_2/q)^\times/U_{\frakf}$} \\\hline\hline
\multirow{3}{*}{$\Q_2$} & \multirow{3}{*}{$1$} & (2) & 1 & \multicolumn{2}{c}{trivial} \\
& & (2)^2 & 2 & \langle -1 \rangle \!\!\!\!\!&\simeq \Z/2 \\
& & (2)^f & \geq 3 & \langle -1 \rangle \times \langle 5 \rangle \!\!\!\!\!&\simeq \Z/2 \times \Z/2^{f-2} \\
\hline

\Q_2(\sqrt{5}) & 5 & (2)^f & \geq 1 & \multicolumn{2}{c}{trivial} \\\hline

\multirow{2}{14ex}{\centering $\Q_2(\sqrt{m})$, \\ $m=-1,-5$} & \multirow{2}{*}{$-4,-20$} & \multirow{2}{*}{$\frakp^f$} & 1,2 & \multicolumn{2}{c}{trivial} \\
 & & & \geq 3 & \langle -1 \rangle \!\!\!\!\!&\simeq \Z/2 \\\hline

\multirow{2}{14ex}{\centering $\Q_2(\sqrt{m})$, \\ $m=2,10$} & \multirow{2}{*}{$8,40$} & \multirow{2}{*}{$\frakp^f$} & 1,2,3,4 & \multicolumn{2}{c}{trivial} \\
 & & & \geq 5 & \langle -3 \rangle \!\!\!\!\!&\simeq \Z/2 \\\hline

\multirow{2}{14ex}{\centering $\Q_2(\sqrt{m})$, \\ $m=-2,-10$} & \multirow{2}{*}{$-8,-40$} & \multirow{2}{*}{$\frakp^f$} & 1,2,3,4 & \multicolumn{2}{c}{trivial} \\
 & & & \geq 5 & \langle -1 \rangle \!\!\!\!\!&\simeq \Z/2\\

\end{tabular}
\caption{\rule{0pt}{2.5ex}Group structure of $(\Z_2/q)^\times/U_{\frakf}$}
\label{table:Z2-quo-unit-grps-pr-2}
\end{table}

\begin{proof}
We use the same kind of induction on $k$ as in the proof of \cite[Theorem 4.2.10]{coh00} and a case-by-case analysis.  We illustrate the calculation in \ref{table:U-quo-unit-grps-pr-2} by providing an example.  
% If $K$ is unramified, then $c=2^f$, and the result is then immediate.  
%where  
%\begin{align*}
%i = \begin{cases} 
%\begin{array}{ll}
%f/2, &\text{if $f$ is even},\\
%\lfloor f/2 \rfloor + 1, & \text{if $f$ is odd},
%\end{array}
%\right.
%\end{align*}
%when $K|\Q_2$ is ramified. 
% From the first two lines of Table~\ref{table:U-quo-unit-grps-pr-2} we have $(\Z/2^k)^\times$ is trivial for $k=1,2$ and isomorphic to  $\Z/2 \times \Z/2^{i-2}$ for $i \geq 3$. The result is clear for $K|\Q_2$ is unramified. 
Suppose $d=2,10$.  Since $K$ is ramified, then $q=2^{k}$ where $k \colonequals \lceil f/2 \rceil$.
\begin{itemize}
\item If $f=1,2$ then $k=1$, and both $U_\mathfrak{f}$ 
and~$(\Z/2^k)^\times$ are trivial.
\item If $f=3,4$ then $k=2$, and $U_\mathfrak{f} = \langle -1 \rangle$ and $(\Z/2^k)^\times = \langle -1 \rangle$. 
\item If $f=5,6$ then $k=3$, we have $U_\mathfrak{f} = \langle -1 \rangle$, $(\Z/2^k)^\times = \langle -1 \rangle \times \langle 5 \rangle \simeq \Z/2 \times \Z/2$. 
\item Finally, for $k \geq 7$, we have
\[ \left\lfloor \frac{f-5}{2}\right\rfloor = \left\lceil \frac{f}{2} \right\rceil - 2 = k-2. \]
%\rfloor = \left\{
%\begin{array}{ll}
%k/2 - 3, &\text{if $k$ is even},\\
%\lfloor k/2 \rfloor - 2, & \text{if $k$ is odd},
%\end{array}
%\right.
%\end{align*}
%hence $\lfloor (k-5)/2\rfloor = (i-2)-1$. 
Therefore, $U_\mathfrak{f}$ contains exactly half the elements of $(\Z_2/q)^\times$.
\end{itemize}
The remaining cases are similar.
\end{proof}

\begin{remark}\label{rem:projection_2}
Similarly to Remark~\ref{rem:projection:3}, the canonical projection maps between quotients behave as expected, except for the field $K=\Q_2(\sqrt{5})$.
In that case, the projection map
$$\pi : (\calO_{\Mquad}/\fp^3)^\times/U_{\frakf} \to (\calO_{\Mquad}/\fp^2)^\times/U_{\frakf}$$
is given by
\[ \pi(\sqrt{5}) = \left(\frac{-1 + \sqrt{5}}{2}\right)^3 \pmod{\fp^2}, \qquad
\pi\left(\frac{-1 + \sqrt{5}}{2}\right) = \frac{-1 + \sqrt{5}}{2} \pmod{\fp^2}.
\]
So $\pi(\sqrt{5}) \ne 0$ contrary to what one might think at first glance.
\end{remark}

%\begin{table}
%\begin{center}
%\begin{tabular}{| >{$}c<{$} |>{$}c<{$}| >{$}c<{$} | >{$}c<{$} |}\hline
%{\Mquad} & \mathfrak{f} & (\Z_2/f)^\times/U_{\frakf} & \text{\rm Group structure} \\\hline\hline
%\Q_2(\sqrt{5}) & 2^k (k \ge 1)& \text{trivial} & \text{trivial}\\\hline
%\multirow{2}{*}{$\begin{array}{c}\Q_2(\sqrt{c}) \\ c = -1, -5 \end{array}$}& \fp^k (k = 1, 2) & \text{\rm trivial} & \text{\rm trivial}\\\cline{2-4}
%& \fp^k (k \ge 3) & \langle  - 1 \rangle & \Z/2 \\\hline
%\Q_2(\sqrt{c}) & \fp^k (k = 1, \ldots, 4)& \text{trivial} & \text{trivial}\\\cline{2-4}
%c = 2, 10 & \fp^k (k \ge 5) & \langle -3\rangle & \Z/2 \\\hline
%\Q_2(\sqrt{c}) & \fp^k (k = 1, \ldots, 4)& \text{trivial}  & \text{trivial}\\\cline{2-4}
%c = -2, -10 & \fp^k (k \ge 5) & \langle -1\rangle  & \Z/2 \\\hline
%\end{tabular}
%\vspace{0.3em}
%\caption{\rule{0pt}{2.5ex}The structure of the group $(\Z_2/f)^\times/U_{\frakf}$}
%\label{table:Z2-quo-unit-grps-pr-2}
%\end{center}
%\end{table}

\begin{corollary}\label{cor:epsilon-d-pr-2}
The following statements hold.
\begin{enumalph}
\item For $d=5$, we have 
$$\chi|_{\Z_2^\times} = \varepsilon_5 \iff \chi|_{U_\frakf} = 1 \iff \chi(3) = \chi(-1) = 1.$$
\item For $d=-4,-20$ and~$f \geq 3$, we have 
$$\chi|_{\Z_2^\times} = \varepsilon_{-4} \iff \chi|_{U_\frakf} = 1 \iff \chi(3) = \chi(-1) = -1.$$
\item For $d=8,40$ and~$f \geq 5$, we have 
$$\chi|_{\Z_2^\times} = \varepsilon_8 \iff \chi|_{U_\frakf} = 1\,\,\text{and}\,\,\chi(-3) = -1\,\,\iff \chi(3) = -1\,\,\text{and}\,\,\chi(-1) = 1.$$
\item For $d=-8,-40$ and~$f \geq 5$, we have 
$$\chi|_{\Z_2^\times} = \varepsilon_{-8} \iff \chi|_{U_\frakf} = 1\,\,\text{and}\,\,\chi(-3) = -1\,\,\iff \chi(3) = 1\,\,\text{and}\,\,\chi(-1) = -1.$$
\end{enumalph}
\end{corollary}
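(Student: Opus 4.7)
The plan is to follow the template established by the proof of Corollary \ref{cor:epsilon_d} in the $\ell=3$ setting, working through the four families of quadratic extensions one at a time. In each case, Tables \ref{table:norm-grps-pr-2} and \ref{table:Z2-quo-unit-grps-pr-2} of Lemma \ref{lem:grp-structures-pr-2} give an explicit decomposition of $(\Z_2/f)^\times$ as the product of $U_\frakf$ with a cyclic group (trivial or of order~$2$), the latter generated by an explicit integer such as $-1$ or $-3$; so the condition $\chi|_{\Z_2^\times} = \varepsilon_d$ will be read off by evaluating both sides on the listed generators of $U_\frakf$ and on a coset representative.

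The first equivalence in each part rests on the auxiliary observation that $\varepsilon_d$ is itself trivial on $U_\frakf$, which converts the equality $\chi|_{\Z_2^\times} = \varepsilon_d$ into the two separate conditions that $\chi|_{U_\frakf}=1$ and that $\chi$ takes the prescribed value at a coset representative. In each case this triviality is a one-line verification from the explicit definitions of $\varepsilon_{-4}$ and $\varepsilon_{\pm 8}$ together with small congruences: for instance, $-3 \equiv 1 \pmod 4$ gives $\varepsilon_{-4}(-3)=1$; $-9 \equiv -1 \pmod 8$ combined with $\varepsilon_8(-1)=1$ gives $\varepsilon_8(-9)=1$; and $3 \equiv -5 \pmod 8$ with $\varepsilon_{-8}(-1)\varepsilon_{-8}(5)=1$ gives $\varepsilon_{-8}(3)=1$. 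The bounds $k\geq 3$ and $k\geq 5$ in the statement are chosen so that the conductor of $\varepsilon_d$ is accommodated by~$f$, which is what allows $\varepsilon_d$ to be viewed as a character of $(\Z_2/f)^\times$ at all.

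The second equivalence in each part is then a reformulation: using the multiplicative identity $\chi(3) = \chi(-1)\chi(-3)$ together with the listed generators of $U_\frakf$ from Table \ref{table:norm-grps-pr-2}, one rewrites the intermediate condition as a pair of conditions on $\chi(3)$ and $\chi(-1)$. I expect the main bookkeeping obstacle to lie at the boundary values of $k$, notably $k=3,4$ in case~(ii) and $k=5,6$ in cases~(iii)--(iv), where $U_\frakf$ is either trivial or generated only by $\pm 1$; there some of the stated conditions become vacuous or redundant, and I will need to verify carefully that the relevant small congruences ($3\equiv -1\pmod 4$, or analogues modulo~$8$) still force the intended equivalence between $\chi|_{U_\frakf}=1$ (supplemented by a coset-representative value) and the conditions on $\chi(3),\chi(-1)$ written in the statement.
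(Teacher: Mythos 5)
Your approach matches the paper's, whose proof is essentially the one-line citation \emph{``This follows from Lemma~\ref{lem:grp-structures-pr-2}''}: read off generators of $U_\frakf$ and of $(\Z_2/f)^\times/U_\frakf$ from Tables~\ref{table:norm-grps-pr-2} and~\ref{table:Z2-quo-unit-grps-pr-2}, observe that $\varepsilon_d$ kills $U_\frakf$ (which is the mod-$\frakf$ image of the norm group, hence contained in $\ker\varepsilon_d$ once $\cond(\varepsilon_d)\mid f$ — the reason for the bounds $k\ge 3$, $k\ge 5$), and then translate via $\chi(3)=\chi(-1)\chi(-3)$. The small congruences you list ($-3\equiv 1\pmod 4$, $-9\equiv -1\pmod 8$, $3\equiv -5\pmod 8$, etc.) are all correct.

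One thing you should follow through on, since you flag it yourself: when you actually carry out the boundary check for part~(ii) you will find that the \emph{stated} middle condition is incomplete. For $c=-1,-5$ Table~\ref{table:norm-grps-pr-2} gives $U_\frakf$ trivial when $k=3,4$ and $U_\frakf=\langle -3\rangle\subset 1+4\Z_2$ when $k\ge 5$; in neither case does $\chi|_{U_\frakf}=1$ constrain $\chi(-1)$, so by itself it cannot be equivalent to $\chi|_{\Z_2^\times}=\varepsilon_{-4}$ (which forces $\chi(-1)=-1$) nor to the right-hand condition $\chi(3)=\chi(-1)=-1$. The middle condition in (ii) should read ``$\chi|_{U_\frakf}=1$ and $\chi(-1)=-1$'', exactly parallel to the supplementary ``$\chi(-3)=-1$'' appearing in (iii) and (iv), and exactly what your template ``trivial on $U_\frakf$ plus the prescribed value at a coset representative from Table~\ref{table:Z2-quo-unit-grps-pr-2}'' produces (the coset representative for $c=-1,-5$ is $-1$). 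So the method is right; the printed statement has dropped a clause, and completing your verification would both establish the result and surface this correction.
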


\begin{proof}
This follows from Lemma~\ref{lem:grp-structures-pr-2}.
\end{proof}

We now focus on the cases of principal series and nonexceptional supercuspidal, where we attached a character~$\chi_{(d,f,r)}$ of~$W_K$ where~$K$ has discriminant~$d$, whose conductor exponent is~$f$ and whose order on~$I_K$ is~$r$.  As in the case $p=3$, % By Corollary~\ref{cor:epsilon-d-pr-2} it follows that 
to define $\chi_{(d,f,r)}|_{I_K}$ satisfying $\chi_{(d,f,r)}|_{\Z_2^\times} = \varepsilon_d$, it is enough to give the values of~$\chi_{(d,f,r)}$ on generators of~ $(\calO_{\Mquad}/\frakf)^\times/U_{\frakf}$.

In preparation for our final result, we list the relevant characters in this way in Table~\ref{table:type-Q_2}. 

\begin{table}
\normalfont\small\renewcommand{\arraystretch}{1.25}
\begin{tabular}{ >{$}c<{$} | >{$}c<{$} || >{$}c<{$}| >{$}c<{$} || >{$}c<{$} | >{$}c<{$} ||  >{$}c<{$} | >{$}c<{$} }
{\Mquad} & d & \mathfrak{f} & f & r & \textup{values of~$\chi$ on generators} & \tau & \condexp(\tau) \\
\hline\hline
\Q_2 & 1 & (2)^4 & 4 & 4 & 1,\, i &\tauPSnew{2}{1,4,4} & 8 \\
\hline
\multirow{2}{*}{$\Q_2(\sqrt{5})$}& \multirow{2}{*}{$5$} & (2) & 1 & 3 & \zeta_3 &\tauSCnew{2}{5,1,3}     
& 2\\
 & & (2)^4 & 4 & 4 & 1,\, i & \tauSCnew{2}{5,4,4} & 8\\
\hline 
%\multirow{2}{*}{ $\Q_2(\sqrt{\pm 2})$ } & \fp^5 & (\pm 8, 5, 4) & -1,\, i & \tauSCnew{2}{\pm 8,5,4} & 2^8\\
% & \fp^5 & (\pm 8, 5, 2) & -1,\, 1 & \tauSCnew{2}{\pm 8,5,2} & 2^8\\
%\hline 
%\multirow{2}{*}{ $\Q_2(\sqrt{\pm 10})$ } & \fp^5 & (\pm 40, 5, 4) & -1,\, i & \tauSCnew{2}{\pm 40,5,4} & 2^8\\
% & \fp^5 & (\pm 40, 5, 2) & -1,\, 1 & \tauSCnew{2}{\pm 40,5,2} & 2^8\\
%\hline 
\multirow{2}{*}{ $\Q_2(\sqrt{-1})$ } & \multirow{2}{*}{ $-4$ } & \fp^3 & 3 & 4 & i & \tauSCnew{2}{-4,3,4} & 5\\
 & & \fp^6 & 6 & 4 & i,\,i &\tauSCnew{2}{-4,6,4} & 8 \\ 
\hline
\Q_2(\sqrt{-5}) & -20 & \fp^3 & 3 & 4 & i & \tauSCnew{2}{-20,3,4} & 5\\
\end{tabular}
% \end{center}
\caption{\rule{0pt}{2.5ex}Nonexceptional, nonspecial inertial types over $\Q_2$}
\label{table:type-Q_2}
\end{table}

The main result of this section is then as follows.

\begin{theorem} \label{thm:nonexceptell2}
Let $E$ be an elliptic curve over $\Q_2$ with additive, potentially good reduction, conductor $\NE$, semistability defect $e_E$, and inertial type $\tau_E$.  Suppose that $e_E \ne 2,24$.  
Then $\tau_E$ is given by one of the cases in Table~\ref{tab:q2224}.
\begin{table}
\normalfont\small\renewcommand{\arraystretch}{1.25}
\begin{tabular}{>{$}c<{$}  >{$}c<{$} | >{$}c<{$} }
  \NE & e_E & \multicolumn{1}{c}{$\tau_E$} \\\hline\hline
 2^2 & 3 & \tauSCnew{2}{5,1,3}\\\hline
 2^4 & 6 & \tauSCnew{2}{5,1,3} \otimes \epschar{-4}\\\hline
2^5 & 8 & \tauSCnew{2}{-4,3,4}, \; \tauSCnew{2}{-20,3,4}\\\hline
\multirow{2}{*}{$2^6$} & 6 & \tauSCnew{2}{5,1,3} \otimes \epschar{8},\; \tauSCnew{2}{5,1,3} \otimes \epschar{-8}\\
 & 8 & \tauSCnew{2}{-4,3,4}  \otimes \epschar{8}, \; \tauSCnew{2}{-20,3,4} \otimes \epschar{8}\\\hline
\multirow{2}{*}{$2^8$} & 4 & \tauPSnew{2}{1,4,4},\; \tauPSnew{2}{1,4,4} \otimes \epschar{-4}, \; \tauSCnew{2}{5,4,4}, \; \tauSCnew{2}{5,4,4} \otimes \epschar{-4}\\
 & 8 & \tauSCnew{2}{-4,6,4},\; \tauSCnew{2}{-4,6,4} \otimes \epschar{8}\\
\end{tabular} 
\caption{\rule{0pt}{2.5ex}Inertial types over $\Q_2$ with semistability defect $e\neq 2,24$}
\label{tab:q2224}
\end{table}

In particular, $\tau_E$ is nonexceptional supercuspidal in all cases except when 
\begin{center}
$e_E=4$ and $\NE=2^8$ and $\tau_E \simeq\tauPSnew{2}{1,4,4},\tauPSnew{2}{1,4,4} \otimes \epschar{-4}$
\end{center}
in which case $\tau_E$ is principal series.  
\end{theorem}

The proof of Theorem \ref{thm:nonexceptell2} will be given in section \ref{sec:proofell25} after treating various cases in the next few sections.

\subsection{Principal series}\label{S:pseries}

Throughout the remaining sections, let $E$ be an elliptic curve over $\Q_2$ with additive, potentially good reduction, semistability defect $e_E \neq 2,24$, conductor $\NE$, and inertial type~$\tau_E$.  

We begin with the relatively easy case of principal series.
 
\begin{proposition} 
If $\tau_E$ is principal series, then $\NE = 2^8$, $e=4$, and $\tau_E \simeq\tauPSnew{2}{1,4,4}$ or $\tau_E \simeq \tauPSnew{2}{1,4,4} \otimes \epschar{-4}$. 
\label{P:pseries}
\end{proposition}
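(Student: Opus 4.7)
The plan is to first pin down the shape of $\tau$ using the trivial-determinant condition, then use a $2$-group argument to force $e$ and $\NE$, and finally read off the two possibilities for $\chi$.

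Since $\tau$ is principal series, I would write $\tau \simeq \chi|_{I_2} \oplus \chi'|_{I_2}$ for quasicharacters $\chi, \chi'$ of $W_2$. The norm quasicharacter $\omega$ is unramified by definition, so $\det(\rho_E)|_{I_2} = \omega|_{I_2}$ is trivial, forcing $\chi'|_{I_2} = \chi^{-1}|_{I_2}$. In particular, the image of $\tau$ sits in a split torus and is cyclic of order equal to the order of $\chi|_{I_2}$, which equals $e$.

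Via local class field theory, $\chi|_{I_2}$ factors through $(\Z_2/2^k)^\times$ where $2^k = \cond(\chi)$, and this group is a $2$-group, so $e$ is a power of $2$. By Lemma~\ref{L:phi} (cyclic case) combined with the hypothesis $e \neq 2$, we have $e \in \{3,4,6\}$, so $e = 4$. By Lemma~\ref{lem:grp-structures-pr-2}, the smallest $k$ for which $(\Z_2/2^k)^\times$ has an element of order $4$ is $k = 4$, where the group equals $\langle -1 \rangle \times \langle 5 \rangle \simeq \Z/2 \times \Z/4$. The principal-series conductor formula~\eqref{E:condPS} gives $\condexp(\NE) = 2\condexp(\chi) \geq 8$, while Lemma~\ref{L:condBound} gives $\condexp(\NE) \leq 8$. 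Hence $\condexp(\chi) = 4$ and $\NE = 2^8$.

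Finally, on the generators $-1, 5$ of $(\Z_2/2^4)^\times$, the order-$4$ requirement forces $\chi(5) \in \{\pm i\}$; these two choices are interchanged by $\chi \leftrightarrow \chi^{-1}$ and produce the same $\tau$. The remaining freedom $\chi(-1) \in \{\pm 1\}$ then distinguishes, via Corollary~\ref{cor:epsilon-d-pr-2}, between $\tauPSnew{2}{1,4,4}$ (for $\chi(-1) = 1$) and $\tauPSnew{2}{1,4,4} \otimes \epschar{-4}$ (for $\chi(-1) = -1$). I do not foresee a substantial obstacle: the only subtle point is verifying that the two values of $\chi(-1)$ really yield inequivalent inertial types, which is immediate since $\chi$ and $\chi^{-1}$ agree on $-1$, so the two values cannot be identified by the $\chi \leftrightarrow \chi^{-1}$ symmetry used to match $\chi(5)$.
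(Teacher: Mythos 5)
Your proof is correct and follows essentially the same route as the paper's: you use the same conductor bound, the same fact that $(\Z_2/2^k)^\times$ is a $2$-group with no order-$4$ element until $k=4$, and the same twist-by-$\epschar{-4}$ normalization. The only reorganization is that you first pin down $e=4$ abstractly (via "$e$ is a power of $2$" plus Lemma~\ref{L:phi} and the running hypothesis $e\neq 2$) and then deduce $\NE=2^8$, whereas the paper bounds $m=\condexp(\chi)\leq 4$ first and then rules out $m\leq 3$ by the $e\leq 2$ contradiction; these are logically equivalent. You also make explicit two points the paper leaves implicit — that $\chi'|_{I_2}=\chi^{-1}|_{I_2}$ follows from $\det\rho_E=\omega$ being unramified, and that the two values of $\chi(-1)$ really give inequivalent types because inversion fixes $\chi(-1)$ — both of which are welcome additions of rigor rather than changes of method.
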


\begin{proof} 
By Lemma~\ref{L:condBound} and \eqref{E:condPS}, we have $\condexp(\tau_E) = 2k$ with $1 \leq k \leq 4$.
Thus $\tau_E = \chi|_{I_2} \oplus \chi^{-1}|_{I_2}$, where $\chi|_{I_2}$ factors through $(\Z_2/2^k\Z_2)^\times$. From Lemma~\ref{lem:grp-structures-pr-2} and Table~\ref{table:U-quo-unit-grps-pr-2} we see that 
if $k \leq 3$, then $\chi|_{I_2}$ is at most quadratic, so $e \leq 2$, a contradiction.  Therefore, $k=4$ and~$\chi|_{I_2}$ factors through
$$(\Z_2/(2^4))^\times = \langle -1 \rangle \times \langle 5 \rangle \simeq \Z/2 \times \Z/4.$$ 
The primitivity of $\chi$ forces $\chi(5) = \pm i$. 
Twisting by~$\epschar{-4}$ we can assume~$\chi(-1)=1$. Thus $\chi|_{I_2} = \chi_{(1, 4,4)}$ or $\chi_{(1, 4,4)}^{-1}$.
We conclude that $\tau \simeq\tauPSnew{2}{1,4,4}$ or $\tau \simeq \tauPSnew{2}{1,4,4} \otimes \epschar{-4}$.  
\end{proof}

\subsection{Quadratic inductions, conductor 8} \label{S:cond3}

Having dealt with the reducible case, we consider in the remaining subsections inertial types induced from a quadratic extension $\Mquad \supset \Q_2$.  In this section, we rule out the possibility that $K$ has conductor exponent $3$.

\begin{proposition} \label{P:cond3}
Suppose that $\tau_E$ is irreducible and induced from a quadratic extension~$\Mquad \supset \Q_2$.  Then $\Mquad$ is either unmarried or it has conductor exponent $2$.
\end{proposition}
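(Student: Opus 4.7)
The plan is to show that if $K$ has conductor $2^3$, then the inertial type $\tau$ is triply imprimitive in the sense of Proposition~\ref{prop:triply-imprimitive}(c); in particular $\tau$ can also be induced from $\Q_2(\sqrt{-5})$, which has conductor~$2^2$, completing the proof.

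First I will pin down $\condexp(\chi)$. Write $K = \Q_2(\sqrt{c})$ with $c \in \{\pm 2, \pm 10\}$, so $\condexp(\psi_K) = 3$. The conductor formula~\eqref{E:condBC} combined with the bound $\ord_2(N_E) \leq 8$ (Lemma~\ref{L:condBound}) gives $k := \condexp(\chi) \leq 5$. Conversely, Lemma~\ref{lem:centralChar} forces $\chi|_{\Z_2^\times} = \psi_K|_{\Z_2^\times}$, which has conductor~$2^3$; since $\chi$ is trivial on $1+\frakp^k$ and $\Z_2 \cap \frakp^k = 2^{\lceil k/2\rceil}\Z_2$, the restriction $\chi|_{\Z_2^\times}$ is trivial on $1+2^{\lceil k/2\rceil}\Z_2$. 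Hence $\lceil k/2\rceil \geq 3$, forcing $k = 5$ and $\condexp(\tau)=8$.

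Next I would analyze $\chi$ on the quotient $G := (\calO_K/\frakp^5)^\times/U_{\frakp^5} \simeq \Z/2 \times \Z/4$, which by Lemma~\ref{lem:grp-structures-pr-2} is generated by $-3$ and $\sqrt{c}-1$. Corollary~\ref{cor:epsilon-d-pr-2} gives $\chi(-3) = -1$. The identity $(\sqrt{c}-1)(-\sqrt{c}-1) = 1-c$ determines, after reducing $1-c$ modulo $U_{\frakp^5}$, the action of the nontrivial Galois involution $s$ on $\sqrt{c}-1$: namely, $s(\sqrt{c}-1) \equiv (\sqrt{c}-1)^{-1}$ when $c \in \{2,10\}$ (where $1-c \equiv -1 \pmod 8$ lies in $U_{\frakp^5}$), and $s(\sqrt{c}-1) \equiv -3 \cdot(\sqrt{c}-1)^{-1}$ when $c \in \{-2,-10\}$. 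Using the irreducibility condition $\chi|_{I_K} \ne \chi^s|_{I_K}$ from Proposition~\ref{prop:non-prim-inert-type}(iv), a brief case analysis shows that in all cases $\chi^s/\chi$ descends to the unique quadratic character of $G$ that is trivial on $-3$ and nontrivial on $\sqrt{c}-1$. Combined with the computation $(\chi^s/\chi)(\sqrt{c}) = \chi(-1) = \pm 1$ on the uniformizer, this shows $\chi^s/\chi$ is quadratic on $W_K$; by Corollary~\ref{cor:factorvianorm}(c) it factors through $\Nm_{K|\Q_2}$, and by Proposition~\ref{prop:triply-imprimitive}(c), $\rho$ is triply imprimitive.

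Finally I would identify an alternate inducing field. Writing $\chi^s/\chi = \theta \circ \Nm_{K|\Q_2}$, evaluation on $\Nm(\sqrt{c}-1) = 1-c$ and $\Nm(\sqrt{c}) = -c$ yields $\theta(1-c) = -1$ and $\theta(-c) = \chi(-1)$, determining $\theta$ up to twist by~$\psi_K$. A short Hilbert-symbol calculation in each of the four cases $c \in \{\pm 2, \pm 10\}$ confirms that $\theta = \chichar{-5}$ is always a valid choice, so $\tau$ is induced from $\Q_2(\sqrt{-5})$ as claimed. The main obstacle is the case-by-case character computation in the previous paragraph, where the way $1-c$ reduces in $U_{\frakp^5}$ depends on the sign of $c$, so the action of $s$ on the second generator must be handled separately for $c>0$ and $c<0$.
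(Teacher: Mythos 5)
Your proposal takes a genuinely different route from the paper's.  After establishing (as the paper does, via Lemma~\ref{lem:centralChar}, Lemma~\ref{lem:order4max}, and the group structure of $(\calO_K/\fp^5)^\times/U_{\fp^5}$) that $\chi^s/\chi$ is quadratic, hence $\rho$ is triply imprimitive, the two arguments diverge.  The paper combines the consequence $\mathrm{P}\rho(W_{\Q_2}) \simeq C_2 \times C_2$ with the elliptic-curve-specific input (Dokchitser--Dokchitser \cite[Lemma~1]{dd08}, $e=8$ forcing $\Phi \simeq Q_8$ and $\rhobar_{E,3}(G_2)$ a 2-Sylow subgroup of $\GL_2(\F_3)$ with projective image $D_4$) to derive an outright \emph{contradiction}, showing no inducing field of conductor $2^3$ can exist.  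You instead use Proposition~\ref{prop:triply-imprimitive}(b) to produce an alternative inducing field of conductor $2^2$ by solving for the descended character $\theta$ via Hilbert symbols, never invoking the elliptic curve at all.  What this buys you is a purely local-representation-theoretic argument that moreover pins down $\condexp(\chi)=5$ exactly (a nice sharpening the paper does not state).  What it costs is that you only establish the \emph{existential} form of the statement ("$\tau$ can be induced from some $K$ of small conductor"), whereas the paper's contradiction establishes the universal form ("no $K$ of conductor $2^3$ induces $\tau$").  The existential form is in fact what the proof of Theorem~\ref{thm:nonexceptell2} actually needs, but it leaves the non-occurrence of the triply imprimitive scenario implicit, to be caught only later in Proposition~\ref{prop:cond-4} (where the $c=-5$, $m=6$ case is killed by exactly the Dokchitser--Dokchitser argument you omitted).

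One small computational slip: for $c \in \{-2,-10\}$ you claim $1-c$ reduces to $-3$ in $(\calO_K/\fp^5)^\times/U_{\fp^5}$, but $U_{\fp^5} = \langle 3\rangle$ in these cases (Table~\ref{table:norm-grps-pr-2}), so $1-c \equiv 3 \equiv 1$ and $s(\sqrt{c}-1) \equiv (\sqrt{c}-1)^{-1}$, just as for $c>0$.  Fortuitously, both the incorrect and corrected identities force $(\chi^s/\chi)(\sqrt{c}-1) = -1$ once irreducibility is imposed, so the final identification of the quadratic character is unaffected — but the case split you describe is spurious, and the uniform treatment is cleaner.
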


\begin{proof}
Assume for purposes of contradiction that $\tau_E$ is induced from a quadratic extension $K \supset \Q_2$ of conductor exponent $3$, i.e., $d=\pm 8, \pm 40$, equivalently, ${\Mquad}=\Q_2(\sqrt{m})$ with $m = \pm 2, \pm 10$.  By Proposition~\ref{prop:non-prim-inert-type}(iv), we have $\tau \simeq \Ind_{I_{\Mquad}}^{I_{2}}(\chi|_{I_K})$ where $\chi|_{I_K} \neq \chi^s|_{I_K}$.  By Lemma~\ref{L:phi}, the order of~$\tau_E$ is $e_E=8$ as $\tau$ is irreducible and $e \neq 24$; in particular, $\Phi \simeq Q_8$ is quaternion of order $8$.

Let $f \colonequals \condexp(\chi)$.  The conductor exponent formula $\condexp(\tau) = 3 + \condexp(\chi)$ and Lemma~\ref{L:condBound} imply $k \leq 5$. 
From Lemma~\ref{lem:grp-structures-pr-2} we have that $\chi|_{I_K}$ factors through
\begin{equation} \label{eqn:ulist}
(\calO_{\Mquad}/\fp^f)^\times/U_{\frakf}  =
\begin{cases}
 \langle u \rangle \simeq \{1\}, & \text{if}\,\, f = 1; \\  
 \langle u \rangle \simeq \Z/2, & \text{if}\,\, f = 2, 3;\\  
 \langle u \rangle \simeq \Z/4, & \text{if}\,\, f = 4;\\  
\langle u \rangle \times \langle -3 \rangle \simeq \Z/4 \times \Z/2, & \text{if}\,\, f = 5;
\end{cases} 
\end{equation}
where $u = \sqrt{m} - 1$. 
%or $u = \sqrt{c} - 3$ according as to whether $c = \pm 2$ or $c = \pm 10$. 
%In particular, every character of conductor at most $\fp^4$ has $\chi(-3) = 1$. Since $\chi|_{\Z_2^\times} = \epschar{d}$, 
%we have $\chi(-3) = -1$ by Corollary~\ref{cor:epsilon-d-pr-2}, 
%hence $m=\condexp(\chi) = 5$
%and $\cond(\tau) = 2^8$; further, $\chi$ is primitive and $e=8$. 

We claim that $\chi^s/\chi$ is quadratic.  By the above, we have $\chi^s/\chi$ nontrivial.  From \eqref{eqn:ulist} and Lemma~\ref{lem:order4max} it follows that~$\chi^s/\chi$ is at most quadratic on inertia.  On the other hand, for the uniformizer $\sqrt{m}\in {\Mquad}$, we have 
$$(\chi^s)\spArt(\sqrt{m}) = \chi\spArt(-\sqrt{m}) = \epschar{d}\spArt(-1)\chi\spArt(\sqrt{m})$$
whence
\[ (\chi^s/\chi)\spArt(\sqrt{m}) = \epschar{d}\spArt(-1) \]
and so $\chi^s/\chi$ is at most quadratic---hence quadratic.

By Corollary~\ref{cor:factorvianorm}(c), we conclude that $\chi^s/\chi$ factors through the norm map.  By Proposition~\ref{prop:triply-imprimitive}, we conclude that $\rho_E$ is triply imprimitive, so  the projective image of $\rho_E$ is $D_2 = C_2 \times C_2$.  On the other hand, by Dokchitser--Dokchitser \cite[Lemma~1]{dd08}, there is a twist of $\rho$ which factors through $\Q_2(E[3])$, 
so $\mathrm{P}\rho \simeq \mathrm{P}\rhobar_{E,3}$.  But then $\rhobar_{E,3}(W_2) \leq \GL_2(\F_3)$ is the 2-Sylow subgroup \cite[Table~1]{dd08} and $\mathrm{P}\rhobar_{E,3}(W_2) \not \simeq C_2 \times C_2$, giving a contradiction. 
\end{proof}

\subsection{Quadratic unramified inductions} \label{S:cond0}

We now consider the case of inertial types induced from the unramified quadratic extension $\Mquad=\Q(\sqrt{5})$.  

\begin{proposition} 
Suppose $\tau_E$ is nonexceptional supercuspidal, obtained by inducing a character $\chi$ of $W_{\Mquad}$ where ${\Mquad}=\Q_2(\sqrt{5})$.  Then $\tau_E$ is given by one of the following cases:
\begin{enumalph}
 \item If $\NE = 2^2$, then $e_E=3$ and $\tau_E \simeq \tauSCnew{2}{5,1,3}$.
 \item If $\NE = 2^4$, then $e_E=6$ and $\tau_E \simeq \tauSCnew{2}{5,1,3} \otimes \epschar{-4}$.
 \item If $\NE = 2^6$, then $e_E=6$ and either $\tau_E \simeq \tauSCnew{2}{5,1,3} \otimes \epschar{8}$ or $\tau_E \simeq \tauSCnew{2}{5,1,3} \otimes \epschar{-8}$.
 \item If $\NE = 2^8$, then $e_E=4$ and either $\tau_E \simeq  \tauSCnew{2}{5,4,4}$ or $\tau_E \simeq \tauSCnew{2}{5,4,4} \otimes \epschar{-4}$.
\end{enumalph}
\label{P:cond0}
\end{proposition}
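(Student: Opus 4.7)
The plan is to use the conductor formula for unramified induction: since $K = \Q_2(\sqrt{5})$ is unramified over $\Q_2$, equation \eqref{E:condBC} gives $\condexp(\tau) = 2\condexp(\chi)$. Writing $m = \condexp(\chi)$, Lemma~\ref{L:condBound} forces $1 \leq m \leq 4$, so $\NE = 2^{2m}$ with $m \in \{1,2,3,4\}$, matching the four cases (a)--(d). For each such $m$ I would enumerate the possible characters $\chi$ on the quotient $(\calO_K/\mathfrak p^m)^\times/U_{\mathfrak p^m}$ using Lemma~\ref{lem:grp-structures-pr-2} and Table~\ref{table:U-quo-unit-grps-pr-2}, then identify the inertial type up to the equivalences $\chi \sim \chi^s$ (giving the same induction) and $\tau \sim \tau \otimes \epschar{d}$ (available through twisting by quadratic characters of $\Q_2$).

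Because $K|\Q_2$ is unramified and $\epschar{5}$ is trivial on inertia, Corollary~\ref{cor:epsilon-d-pr-2}(i) reduces the determinant condition to $\chi|_{U_{\mathfrak p^m}} = 1$, which is automatic once $\chi$ is viewed on the quotient. For each $m$ I would then impose (i) primitivity at level $\mathfrak p^m$, so that $\chi$ does not factor through $(\calO_K/\mathfrak p^{m-1})^\times$, and (ii) $\chi^s \neq \chi$ (equivalently, by Lemma~\ref{L:factorvianorm2}, $\chi\spArt$ does not factor through $\Nm_{K|\Q_2}$), so that $\tau = \Ind \chi$ is genuinely supercuspidal. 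The case $m = 1$ gives $(\calO_K/\mathfrak p)^\times/U_{\mathfrak p} \simeq \Z/3\Z$ with generator $w - 1$; primitivity forces $\chi(w-1) = \zeta_3^{\pm 1}$, and these two characters are $s$-conjugate, yielding the single type $\tauSCnew{2}{5,1,3}$ with $e = 3$.

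For $m = 2, 3$, the key observation is that twisting by $\epschar{d} \circ \Nm_{K|\Q_2}$ (for $d = -4$ and $d = \pm 8$ respectively) raises the conductor correctly: $\epschar{-4}$ has conductor $2^2$ and $\epschar{\pm 8}$ has conductor $2^3$, and since $K|\Q_2$ is unramified the pullback conductors match. A direct computation of $\Nm_{K|\Q_2}(w - 1) = -1$ and $\Nm_{K|\Q_2}(2w - 1) = -5$ lets me evaluate the twists explicitly on the generators of $(\calO_K/\mathfrak p^m)^\times/U_{\mathfrak p^m}$ and match them against the primitive characters of the correct order. For $m = 3$ the twists by $\epschar{8}$ and $\epschar{-8}$ produce two inequivalent types, giving case (c); and I would verify that no other primitive character at $\mathfrak p^m$ survives irreducibility (any Galois-invariant character yields a reducible induction).

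For $m = 4$, the quotient is $\langle 2w - 1\rangle \times \langle w - 1\rangle \simeq \Z/2\Z \times \Z/12\Z$, and primitivity forces the order of $\chi(w-1)$ to divide $12$ but not $6$, so it is $4$ or $12$. The order-$12$ option would contribute $e = 12$, which Lemma~\ref{L:phi} excludes at $\ell = 2$; hence $\chi(w-1) \in \{\pm i\}$ and $\chi(2w-1) \in \{\pm 1\}$, giving four characters that pair into two $s$-conjugacy classes corresponding to $\tauSCnew{2}{5,4,4}$ and $\tauSCnew{2}{5,4,4} \otimes \epschar{-4}$. The main obstacle I anticipate is the careful bookkeeping in case (d): tracking which of the four characters are $s$-conjugate (to avoid double counting), and verifying that the $\epschar{d}$-twists of $\chi_{(5,1,3)}$ and $\chi_{(5,4,4)}$ really land on the primitive characters of the predicted order rather than producing a character of lower conductor or unwanted order.
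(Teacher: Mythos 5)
Your plan follows the paper's proof closely in outline (use the unramified conductor formula $\condexp(\tau)=2m$, enumerate $m\in\{1,2,3,4\}$, read off primitive characters on $(\calO_K/\fp^m)^\times/U_{\fp^m}$, and organize them by $s$-conjugacy and twisting). The one genuinely different ingredient is how you propose to identify the types in cases $m=2,3$: the paper first shows there is exactly one type at conductor $2^4$ (resp.\ exactly two at $2^6$), and then pins them down by producing the quadratic twists of a known elliptic curve with type $\tauSCnew{2}{5,1,3}$, whereas you propose to compute $\Nm_{K|\Q_2}(w-1)=-1$, $\Nm_{K|\Q_2}(2w-1)=-5$ and evaluate $\epschar{-4}\circ\Nm$, $\epschar{\pm 8}\circ\Nm$ directly on the generators of the quotient. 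Both routes are valid, and your direct computation is a legitimate alternative; the paper's argument is slightly slicker because it avoids having to determine which subgroup of $(\calO_K/\fp^m)^\times/U_{\fp^m}$ is the image of $1+\fp^{m-1}$, which is the one genuinely delicate point in your plan. Concretely: at $m=3$ the image of $1+\fp^2$ in $\langle 2w-1\rangle\times\langle w-1\rangle\simeq\Z/2\times\Z/6$ is \emph{not} the subgroup $\langle 2w-1\rangle$ but rather $\langle(2w-1)(w-1)^3\rangle$, so primitivity at $\fp^3$ means $\chi(2w-1)\chi(w-1)^3=-1$, not $\chi(2w-1)=-1$. Your twist of $\chi_{(5,1,3)}$ by $\epschar{-8}$ has $\chi(2w-1)=\epschar{-8}(-5)=1$ and $\chi(w-1)=-\zeta_3$, which does satisfy the correct condition but would be wrongly excluded if you tested the value on $2w-1$ alone. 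So if you execute your computational plan, use the correct primitivity generator (or, as the paper does, sidestep the issue by counting types and matching them against explicit elliptic-curve twists). For $m=4$, your bookkeeping plan is sound; the pairing of the four characters into two $s$-orbits is verified by the relations $\chi_{(5,4,4)}^s=\chi_{(5,4,4)}\,\epschar{-8}|_K$ and $(\chi_{(5,4,4)}\,\epschar{-4}|_K)^s=\chi_{(5,4,4)}\,\epschar{8}|_K$, exactly as the paper records.
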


\begin{proof} 
Since ${\Mquad}$ is unramified over $\Q_2$, we are in case (iii) of Proposition~\ref{prop:non-prim-inert-type}(iii).  We conclude that $\tau$ has cyclic image of order $e_E=3,4,6$ by Lemma~\ref{L:phi} (having excluded $e_E=2$ and $e_E=24$ at the start). The conductor exponent of $\tau$ is equal to $2k$ by \eqref{E:condBC}, where $k \colonequals \condexp(\chi) \leq 4$ by Lemma~\ref{L:condBound}.  

From Corollary~\ref{cor:epsilon-d-pr-2} we have $\chi|_{\Z_2^\times} = \epschar{5}$ is trivial. Moreover, since the order of~$\chi|_{I_2}$ is also~$e > 2$ it follows from Corollary~\ref{cor:factorvianorm}(b) that all the potential candidates for~$\chi|_{I_2}$ arising below do not factor via the norm map (as required for irreducibility).  Let $\nu \colonequals (-1+\sqrt{5})/2$.  
\begin{itemize}
\item Suppose $k=1$; then $\NE = 2^2$, and the reduction is tame, hence $e_E=3$. 
From Lemma~\ref{lem:grp-structures-pr-2} and Table~\ref{table:U-quo-unit-grps-pr-2}, the character $\chi|_{I_2}$ factors through  
$$(\calO_{\Mquad}/\fp)^\times/U_{\frakf}  = \langle \nu \rangle \simeq \Z/3. $$
So $\chi(\nu) =\zeta_3^{\pm 1}$ and indeed $\chi$ does not factor through the norm.  Thus there are two conjugated possibilities for~$\chi|_{I_2}$, and we can take $\chi|_{I_2} = \chi_{(5, 1, 3)}$, which gives $\tau \simeq \tauSCnew{2}{5,1,3}$. This handles~(a).

\item Suppose $k=2$. From Lemma~\ref{lem:grp-structures-pr-2} and Table~\ref{table:U-quo-unit-grps-pr-2}, $\chi|_{I_2}$ factors through
$$(\calO_{\Mquad}/\fp^2)^\times/U_{\frakf}  = \langle \nu \rangle \simeq \Z/6.$$
From this, we see that $\chi$ is primitive if and only if $\chi(\nu) = \zeta_6^{\pm 1}$ and again, $\chi$ does not factor via the norm.  Thus there are two conjugate choices for $\chi|_{I_{\Q_2}}$, giving rise to the same type $\tau'$, showing that there is a unique type of conductor $2^4$.  But, twisting an elliptic curve with inertial type $\tauSCnew{2}{5,1,3}$ by $-1$, gives an elliptic curve of conductor $2^4$ and inertial type $\tauSCnew{2}{5,1,3} \otimes \epschar{-4}$. 
Since $\tau'$ is the unique inertial type of conductor $2^4$, we must have $\tau' = \tauSCnew{2}{5,1,3} \otimes \epschar{-4}$. This proves~(b).

\item Suppose $k=3$. From Lemma~\ref{lem:grp-structures-pr-2} and Table~\ref{table:U-quo-unit-grps-pr-2}, $\chi|_{I_2}$ factors through the quotient
\begin{equation*}
(\calO_{\Mquad}/\fp^3)^\times/ U_{\frakf} =  \langle \sqrt{5} \rangle \times \langle \nu \rangle \simeq \Z/2 \times \Z/6.
\end{equation*}
The primitivity condition here is more subtle; indeed, if
$\chi = \theta \circ \pi$ for some character~$\theta$
with $\condexp(\theta) \leq 2$ where $\pi$ is the projection defined in Remark~\ref{rem:projection_2} then,
since $(\calO_{\Mquad}/\fp^2)^\times/U_{\frakf} \simeq \Z/6$
we conclude that
$\chi(\sqrt{5} \cdot \nu^3) = \theta (\nu^6) = 1$. Thus $\chi$ is primitive if and only if $\chi(\sqrt{5}) \neq \chi(\nu)^{-3}$.
Moreover, since $\Z/4$ is not a subgroup of the above quotient, we have $e=3$ or $e=6$ and so $\chi(\nu) = \zeta_6^j$ with $1 \leq j \leq 5$ and $3 \neq j$. Thereofre, if $j = 1,5$ then $\chi(\sqrt{5}) = 1$ from the primitivity condition and if $j = 2,4$ then $\chi(\sqrt{5}) = -1$.
This gives four possibilities for $\chi|_{I_2}$ yielding two pairs of conjugate characters, and hence two possible inertial types of conductor $2^6$. But, twisting an elliptic curve with
inertial type $\tauSCnew{2}{5,1,3}$ by $2$ and $-2$ gives an elliptic curve of conductor $2^6$ 
and inertial type $\tauSCnew{2}{5,1,3} \otimes \epschar{8}$ and $\tauSCnew{2}{5,1,3} \otimes \epschar{-8}$, respectively.  So, $\tauSCnew{2}{5,1,3} \otimes \epschar{8}$ and $\tauSCnew{2}{5,1,3} \otimes \epschar{-8}$ must be the two inertial types of conductor~$2^6$, completing the proof of~(c).

\item Finally, suppose $k=4$. From Lemma~\ref{lem:grp-structures-pr-2} and Table~\ref{table:U-quo-unit-grps-pr-2}, $\chi|_{I_2}$ factors through the quotient
\begin{equation*}
(\calO_{\Mquad}/\fp^4)^\times/U_\frakf = \langle \sqrt{5} \rangle \times \langle \nu \rangle \simeq  \Z/2 \times \Z/12.
\end{equation*}
Since $\chi$ is primitive, $\chi(\nu)$ has order $4$ or $12$. In the latter case, the image of~$\tau$ has size $e=12$, a contradiction. 
So $\chi(\nu) = \pm i$ and $\chi$ does not factor via the norm.  Since there are no further constraints we can have $\chi(\sqrt{5}) = \pm 1$. This gives four possible characters. 
Similarly, one can show that $\delta = \chi \cdot  \epschar{-4}|_K$ has conductor $\fp^4$, 
satisfies $\delta|_{\Z_2^\times} = 1$ and that it does not factor through the norm. Hence the possibilities for $\chi|_{I_2}$ are 
\[
\chi_{(5, 4, 4)},\,\, \chi_{(5, 4, 4)}^s,\,\, \chi_{(5, 4, 4)} \cdot \epschar{-4},\,\, \text{or}\,\, (\chi_{(5, 4, 4)} \cdot \epschar{-4})^s,
\]
therefore $\tau_E \simeq  \tauSCnew{2}{5,4,4}$ or $\tau_E \simeq \tauSCnew{2}{5,4,4} \otimes \epschar{-4}$, as desired. 
(Note that the twisted types $\tauSCnew{2}{5,4,4} \otimes \varepsilon_{\pm 8}$ also have conductor $2^8$ 
but they do not appear above due to the relations
$\chi_{(5, 4, 4)}^s = \chi_{(5, 4, 4)} \epschar{-8}$ and $(\chi_{(5, 4, 4)} \epschar{-4})^s = \chi_{(5, 4, 4)} \epschar{8}$.)
\end{itemize}
This exhausts the possible cases and completes the proof.
\end{proof}

\subsection{Quadratic inductions, conductor 4} \label{S:cond2}

We conclude with the case of conductor~$4=2^2$.

\begin{proposition}
Suppose $\tau_E$ is nonexceptional supercuspidal type, obtained by inducing a character $\chi$ of $W_{\Mquad}$ 
where $K \supseteq \Q_2$ has conductor exponent $2$.  Then $e_E=8$ and $\tau_E$ is given by one of the following cases:
\begin{enumalph}
 \item If $\NE = 2^5$, then $\tau_E \simeq \tauSCnew{2}{-4,3,4}$ or $\tau_E \simeq \tauSCnew{2}{-20,3,4}$.
 \item If $\NE = 2^6$, then $\tau_E \simeq \tauSCnew{2}{-4,3,4}  \otimes \epschar{8}$ or $\tau_E \simeq \tauSCnew{2}{-20,3,4} \otimes \epschar{8}$.
 \item If $\NE = 2^8$, then $\tau_E \simeq \tauSCnew{2}{-4,6,4}$ or $\tau_E \simeq \tauSCnew{2}{-4,6,4} \otimes \epschar{8}$.
\end{enumalph}
\label{P:cond2}
\end{proposition}

\begin{proof} 
By assumption $e \neq 24$, hence by Lemma~\ref{L:phi} and Proposition~\ref{prop:non-prim-inert-type} we must have $e_E=8$.

The quadratic extensions $K \supset \Q_2$ of conductor~$2^2$ are $K=\Q_2(\sqrt{m})$ with $m=-1, -5$ corresponding to $d=-4,-20$.  In both cases, we must have $\chi|_{\Z_2^\times} = \epschar{-4}$, which implies that  $\chi(-1) = \chi(3) = -1$
by Corollary~\ref{cor:epsilon-d-pr-2}.
In particular, all the candidates for~$\chi$ we will find below do not factor via the norm map by Corollary~\ref{cor:factorvianorm}(a).
By the conductor exponent formula~\eqref{E:condBC} and Lemma~\ref{L:condBound}, we have $k \colonequals \condexp(\chi) \leq 6$. Since all characters with conductor exponent 
$\leq 2$ have $\chi(-1)=1$, we must have $3 \leq k \leq 6$. Furthermore, from Table~\ref{table:U-quo-unit-grps-pr-2} we also see there are no primitive characters for $k=5$ for both values of~$m$, hence $k = 3,4,6$.

\begin{itemize}
\item Suppose $d=-4$ and $k=3$. By Lemma~\ref{lem:grp-structures-pr-2} and Table~\ref{table:U-quo-unit-grps-pr-2}, $\chi|_{I_K}$ factors through
$$(\calO_{\Mquad}/\fp^3)^\times/U_\frakf = \langle \sqrt{m} \rangle \simeq \Z/4.$$ 
Since $\chi$ is primitive, we must have $\chi(\sqrt{m})= \pm i$. 
Therefore, there are two possible conjugate choices. Taking $\chi|_{I_K} = \chi_{(-4,3,4)}$,
we obtain $\tau \simeq \tauSCnew{2}{-4,3,4}$. This proves~(a)
for $m=-1$.

\item Suppose $d=-4$ and $k=4$. By Lemma~\ref{lem:grp-structures-pr-2} and Table~\ref{table:U-quo-unit-grps-pr-2}, $\chi|_{I_K}$ factors through
\[ (\calO_{\Mquad}/\fp^4)^\times/U_\frakf  = \langle \sqrt{m} \rangle \times \langle 2\sqrt{m} - 1\rangle \simeq \Z/4 \times \Z/2. \]
Since $\chi$ is primitive, we have $\chi(2\sqrt{m} - 1) = -1$. Furthermore, since $\chi(-1)=\chi(\sqrt{m})^2 = -1$, we must have $\chi(\sqrt{m})= \pm i$.
Therefore, there are two conjugate choices, and a unique type of conductor~$2^6$.
But, twisting an elliptic curve with inertial type $\tauSCnew{2}{-4,3,4}$ by $2$ gives an elliptic curve with conductor~$2^6$ and inertial type $\tauSCnew{2}{-4,3,4}  \otimes \epschar{8}$. 
By the uniqueness of the type at conductor~$2^6$, we must have $\tau \simeq \tauSCnew{2}{-4,3,4}  \otimes \epschar{8}$, proving the first type in~(b).

\item Suppose $d=-4$ and $k=6$. By Lemma~\ref{lem:grp-structures-pr-2} and Table~\ref{table:U-quo-unit-grps-pr-2}, $\chi|_{I_K}$ factors through
\[ (\calO_{\Mquad}/\fp^6)^\times/U_\frakf  = \langle \sqrt{m} \rangle \times \langle 2\sqrt{m} - 1\rangle \simeq \Z/4 \times \Z/4. \]
Since $\chi$ is primitive, we must have $\chi(2\sqrt{m} - 1) = \pm i$. Analogously to the case $k = 4$, we also have $\chi(\sqrt{m})= \pm i$.
%Finally, we check that $\chi^s/\chi$ does not factor through the norm map, i.e. that the inertial type is not triply primitive. To this end, we compute $(\chi^s/\chi)(\pi)$,
%where $\pi = 1 - \sqrt{c}$.
%\begin{align*}
%(\chi^s/\chi)(\pi) &= \chi(s(\pi)/\pi) = \chi((1 + \sqrt{c})/(1 - \sqrt{c})) = \chi(\sqrt{c}(1 - \sqrt{c})/(1 - \sqrt{c}))\\
%&= \chi(\sqrt{c}) = \pm i.
%\end{align*}
%So $\chi^s/\chi$ is not quadratic, hence does not factor through the norm map by part 3) of Corollary~\ref{cor:factorvianorm}. 
This gives rise to four characters. 
Note that $\delta = \chi \cdot  \epschar{8}|_K$ also has conductor~$\fp^6$,
and satisfies $\delta|_{\Z_2^\times} = \epschar{-4}$ and does not factor through the norm. We conclude that the four possibilities  for $\chi|_{I_K}$ are
\[
 \chi_{(-4,6, 4)}, \quad \chi_{(-4,6, 4)}^s, \quad \chi_{(-4,6, 4)} \cdot \epschar{8}|_{K}, \quad (\chi_{(-4,6, 4)} \cdot \epschar{8}|_{K})^s,
\]
yielding $\tau = \tauSCnew{2}{-4,6,4}$ or $\tauSCnew{2}{-4,6,4} \otimes \epschar{8}$. This proves (c) for $d=-4$.  

\item Suppose $d=-20$ and $k = 3, 4$.  Then the same argument as for $m=-1$ applies, using again Lemma~\ref{lem:grp-structures-pr-2} and Table~\ref{table:U-quo-unit-grps-pr-2}
for the group structures. This completes the proof of (a) and (b) for $d=-20$.

\item Finally, suppose $d=-20$ and $k=6$. By Lemma~\ref{lem:grp-structures-pr-2} and Table~\ref{table:U-quo-unit-grps-pr-2}, $\chi|_{I_K}$ factors through
\[ (\calO_{\Mquad}/\fp^6)^\times/U_\frakf  = \langle \sqrt{m} \rangle \times \langle 2\sqrt{m} - 1\rangle \simeq \Z/4 \times \Z/4. \]
Since~$\chi$ is primitive we have $\chi(2\sqrt{m} - 1) = \pm i$ as for $m=-1$.  We also have
\[
 \chi(\sqrt{m})^2 = \chi(-5) = \chi(3) = -1 \implies \chi(\sqrt{m}) = \pm i.
\]

\noindent
We claim that $\chi^s/\chi$ is quadratic. Indeed, it follows from Lemma~\ref{lem:order4max} and the fact that~$\tau$ is irreducible that~$\chi^s/\chi$ is quadratic on inertia.  We now compute $(\chi^s/\chi)(\pi)$,
where $\pi = 1 - \sqrt{m}$ is a uniformizer:
\begin{align*}
(\chi^s/\chi)(\pi) &= \chi(s(\pi)/\pi) = \chi((1 + \sqrt{m})/(1 - \sqrt{m})) = \chi(u(1 - \sqrt{m})/(1 - \sqrt{m}))\\
&= \chi(u),
\end{align*}
where $u = -(2 - \sqrt{m})/3 \in \calO_K^\times$. As elements of $\calO_{\Mquad}/\fp^6)^\times/U_\frakf$, we have $u= \sqrt{m}^3 \cdot (2\sqrt{m}-1)$ hence $$\chi(u)= \chi(\sqrt{m})^3 \cdot \chi(2\sqrt{m}-1) = (\pm i)^3(\pm i) = \pm 1,$$ showing that $\chi^s/\chi$ is quadratic, as claimed.  

Since $\chi^s/\chi$ is quadratic, it factors via the norm map by Corollary~\ref{cor:factorvianorm}. So $\rho_E$
is triply imprimitive by Proposition~\ref{prop:triply-imprimitive}.  This leads to a contradiction as in the proof of Proposition~\ref{P:cond3}.  We conclude there are no types arising from an elliptic curve for $k=6$ and $d=-20$.
\end{itemize}
This completes the proof by cases.
\end{proof}

\begin{remark} In Proposition~\ref{P:cond2}, we always have $e = 8$.  To ensure that we do not get the same kind of contradiction that arises in the last case of the proof of this proposition, to show that this type occurs we must check the order of~$\chi^s/\chi$ in all cases.  Instead, we will show in section~\ref{sec:non-exp:2} that there is an elliptic curve with that type.
\end{remark}

\subsection{Proof of theorem} \label{sec:proofell25}

We are now ready to prove the main result of section~\ref{sec:inert2nonexcept}. 

\begin{proof}[Proof of Theorem~\textup{\ref{thm:nonexceptell2}}]
% We have $e \ne 24$ by Lemma~\ref{L:exceptional=24}.
% Part (1) is Lemma~\ref{L:e=2}, so we can assume also $e \ne 2$.

Since $e \neq 24$, by Lemma~\ref{L:exceptional=24} and Proposition~\ref{prop:non-prim-inert-type},  $\tau_E$ is either principal series or nonexceptional supercuspidal.  

The case when $\tau_E$ is principal series is treated by Proposition~\ref{P:pseries}. 

If $\tau_E$ is nonexceptional supercuspidal with conductor $2^8$ induced from a quadratic extension~$K$, then by Proposition~\ref{P:cond3}, $K$ has conductor $1$ or $4$: these are covered in Propositions~\ref{P:cond0}(d) and~\ref{P:cond2}(c), respectively.  

We are left with the nonexceptional supercuspidal types of conductor~$\neq 2^8$, which are covered by Propositions~\ref{P:cond0}(a)--(c) and~\ref{P:cond2}(a)--(b).  
\end{proof}

\subsection{Explicit realization} \label{sec:non-exp:2}

In Table~\ref{table:types-Q2:sc} we match each nonexceptional inertial type with a representative elliptic curve over $\Q_2$, with code online \cite{ourcode}.

We recall the notion of descent from section~\ref{sec:exm3}.

\begin{table}
\normalfont\small\renewcommand{\arraystretch}{1.25}
\begin{tabular}{ >{$}c<{$} || >{$}c<{$} | >{$}c<{$} | >{$}c<{$} | >{$}c<{$} | >{$}c<{$} || >{$}c<{$} }
\tau & e & \condexp(\tau) & \Nm_{L|K}(\calO_L^\times)/U_\frakf & L' & \Gal(L\,|\,\Q_2) & E \\
\hline\hline
\text{trivial} &  1 & 0 & - & \LMFDBL{2.1.0.1} & \LMFDBG{1T1} \simeq C_1 & \LMFDBE{11a1} \\
\hline
\epschar{-1} &  2 & 4 & - & \LMFDBL{2.2.2.1} & \LMFDBG{2T1} \simeq C_2 & \LMFDBE{176b2}\\
\hline
\epschar{2} &  2  & 6 & - & \LMFDBL{2.2.3.1} & \LMFDBG{2T1} \simeq C_2 & \LMFDBE{704a2}\\
\hline
\epschar{-2} &  2 & 6  & - & \LMFDBL{2.2.3.3} & \LMFDBG{2T1} \simeq C_2 & \LMFDBE{704k2}\\
\hline
\tauSCnew{2}{5,1,3} &  3  & 2  & \textup{trivial} & \LMFDBL{2.3.2.1} & \LMFDBG{3T2} \simeq S_3 & \LMFDBE{20a1}\\
\hline
\tauSCnew{2}{5,4,4} &  4  & 8 &\langle (1, 4)\rangle \simeq \Z/6  & \LMFDBL{2.4.11.18} & \LMFDBG{4T3} \simeq D_4 & \LMFDBE{256a1} \\
\hline
\tauSCnew{2}{5,4,4} \otimes \epschar{-4}  &  4  & 8 & \langle (1, 10)\rangle \simeq \Z/6  & \LMFDBL{2.4.11.17} & \LMFDBG{4T3} \simeq D_4 & \LMFDBE{256d1} \\
\hline
\tauPSnew{2}{1,4,4}  &  4 & 8 & \langle (1, 0)\rangle \simeq \Z/2 & \LMFDBL{2.4.11.1} & \LMFDBG{4T1} \simeq C_4 & \LMFDBE{768b1}\\
\hline
\tauPSnew{2}{1,4,4} \otimes \epschar{-4} &  4 & 8 & \langle (1, 2)\rangle \simeq \Z/2 & \LMFDBL{2.4.11.2} & \LMFDBG{4T1} \simeq C_4  & \LMFDBE{768h1} \\
\hline
\tauSCnew{2}{5,1,3} \otimes \epschar{-4}  &  6 & 4 & \textup{trivial} & \LMFDBL{2.6.8.1} & \LMFDBG{6T3} \simeq D_6 & \LMFDBE{80b1}\\
\hline
\tauSCnew{2}{5,1,3} \otimes \epschar{8}  &  6  & 6 & \langle (0, 3)\rangle \simeq \Z/2 & \LMFDBL{2.6.11.1} & \LMFDBG{6T3} \simeq D_6 & \LMFDBE{320c2}\\
\hline
\tauSCnew{2}{5,1,3} \otimes \epschar{-8} &  6 & 6 & \langle (1, 0) \rangle \simeq \Z/2 & \LMFDBL{2.6.11.9} & \LMFDBG{6T3} \simeq D_6 & \LMFDBE{320f2} \\
\hline
\tauSCnew{2}{-20, 3, 4} &  8 & 5 & \textup{trivial} & \LMFDBL{2.8.16.65} & \LMFDBG{8T8} \simeq 2\cdot D_4& \LMFDBE{96a1} \\
\hline
\tauSCnew{2}{-4, 3, 4}  &  8 & 5 & \textup{trivial} & \LMFDBL{2.8.16.66} & \LMFDBG{8T8} \simeq 2 \cdot D_4& \LMFDBE{288a1} \\
\hline
\tauSCnew{2}{-20, 3, 4} \otimes \epschar{8}  &  8 & 6 & \langle (2, 1)\rangle \simeq \Z/2 & \LMFDBL{2.8.18.74} & \LMFDBG{8T8} \simeq 2 \cdot D_4 & \LMFDBE{192a2} \\
\hline
\tauSCnew{2}{-4, 3, 4} \otimes \epschar{8} &  8  & 6 & \langle (2, 1)\rangle \simeq \Z/2 & \LMFDBL{2.8.18.73} & \LMFDBG{8T8} \simeq 2 \cdot D_4& \LMFDBE{576f2}  \\
\hline
\tauSCnew{2}{-4,6,4} \otimes \epschar{8}  &  8  & 8 & \langle (3, 3)\rangle \simeq \Z/4& \LMFDBL{2.8.24.66} & \LMFDBG{8T8} \simeq 2 \cdot D_4& \LMFDBE{256b2} \\
\hline
\tauSCnew{2}{-4,6,4} &  8 & 8 & \langle (3, 1)\rangle \simeq \Z/4 & \LMFDBL{2.8.24.68} & \LMFDBG{8T8} \simeq 2 \cdot D_4& \LMFDBE{256c1} \\
\end{tabular}
\caption{\rule{0pt}{2.5ex}Types, defining fields, and elliptic curves realizing each nonexceptional inertial type over $\Q_2$ in the case of potentially good reduction}
\label{table:types-Q2:sc}
\end{table}

\begin{proposition} \label{prop:2eachfield}
For $p=2$ and each nonexceptional inertial type $\tau$ arising from an elliptic curve
with additive, potentially good reduction, there is a unique descent $L'$ of the inertial field.  Moreover, either $L'=L$ is Galois or the compositum $L=\Q_4 L'$ is Galois, where $\Q_4$ is the quadratic unramified extension of $\Q_2$.
\end{proposition}

\begin{proof}
The proof is similar to Proposition~\ref{prop:3eachfield}.
\end{proof}

With the list of fields in hand, the proof of Table~\ref{table:types-Q2:sc} is similar to section~\ref{sec:exm3}, and we now summarize it. We note however that, in contrast with section~\ref{sec:exm3}, in the argument below we do not need the description of the norm groups of all the rows in the table,
as many rows are deduced by taking quadratic twists of other rows (for which we do use that information); we decided to include the norm groups in all cases for completion sake.

The curve \LMFDBE{11a1} has good reduction at $2$ proving the first row and
twisting by $-1$, $2$, and~$-2$ yields the next three rows.  To complete the correspondence, using norm group computations we will find which type corresponds to each field.

We start with the principal series case. The curve \LMFDBE{768b1} over~$\Q_2$ has conductor $2^8$ and obtain good reduction over a cyclic extensions $L_\tau \supseteq \Q_2$ of degree 4 hence its inertial type~$\tau$ is a principal series of conductor $2^8$  determined by a character $\chi|_{I_2}$ of order~4. Moreover, setting $K=\Q_2$ and $\frakf = 2^4$, by local class field theory, we have
\begin{align*}
\ker (\chi^A|_{(\calO_K/\frakf)^\times}) = \Nm_{L_\tau|K}(\calO_{L_\tau}^\times)  \hookrightarrow (\calO_K/\frakf)^\times,
\end{align*}
where we used that $U_\frakf = 1$ in this case. Comparing the norm group obtained from $L_\tau$ to the character defined in the first row of Table~\ref{table:type-Q_2} we conclude that $\tau = \tauPSnew{2}{1,4,4}$. Twisting \textsf{768b1} by $-1$ shoes that the inertial type of \LMFDBE{768h1} is $\tauPSnew{2}{1,4,4} \otimes \epschar{-4}$.

We are now left with the supercuspidal types. Following an argument and calculations analogous to section~\ref{sec:exm3} we compute norm groups of the form  $\Nm_{L|K}(\calO_L^\times)/U_\frakf$ inside $(\calO_K/\frakf)^\times/U_\frakf$ where the latter is given by the group structure and generators in Table~\ref{table:U-quo-unit-grps-pr-2}; these norm groups uniquely identify the field $L$ as an extension of~$K$. Comparing the output of this calculation to the definition of the characters in Table~\ref{table:type-Q_2} plus taking adequate quadratic twists establishes all rows except those corresponding to \LMFDBE{96a1}, \LMFDBE{288a1}, \LMFDBE{192a2}, and \LMFDBE{576f2}; since the latter two curves are quadratic twits by~$\epschar{8}$ of the first two, it suffices to determine the types for \textsf{96a1} and \textsf{288a1}. There is ambiguity here because the fields of good reduction of these curves both contain $\Q_2(\sqrt{d})$ for $d=-4,-20$ and the two resulting norm groups are~$\{0\}$.  Finally, an argument analogous to that at the end of section~\ref{sec:exm3} for the types $\tauSCnew{3}{\pm 3,2,6}$
completes the proof---here we use that $\Gal(L\,|\,\Q_2) \simeq  2 \cdot D_4$ modulo its center has order 8.

The followiin corollary is analogous to Corollary~\ref{cor:algQ3} and follows from the previous discussion.

\begin{corollary} \label{cor:algQ2}
Let $E$ be an elliptic curve over $\Q_2$ with potentially good reduction. Assume that $E$ semistability defect $e_E \neq 2$.
Then there is a unique field $L'$ in Table~\ref{table:types-Q2:sc} of minimal degree such that $E$ obtains good reduction over $L'$, and $\tau_E$ is given by the type that corresponds to this $L'$.
\end{corollary}

\section{Exceptional inertial types for \texorpdfstring{$E$ over $\Q_2$}{EoverQQ2}}
\label{S:exceptionalThm}

Finally, we consider exceptional inertial types which arise only for $p=2$.

\subsection{Setup and result}

Let $r=\pm 1, \pm 2$ and define the following elliptic curves over $\Q_2$
\begin{equation} \label{E:exceptCurves}
 E_{1,r} \colon ry^2 = x^3 - 3x - 1 \qquad \text{and} \qquad  E_{2,r} \colon ry^2 = x^3 + 3x + 2.
\end{equation}
These curves have potentially good reduction with semistability defect $e=24$. We denote by $\tau_{i,r}$ the inertial type of~$E_{i,r}$. We have $N=0$ for all~$i,r$ as above. For reasons that will shortly be clear, we also abbreviate $\tauex{1}:=\tau_{1,1}$ and $\tauex{2} := \tau_{2,1}$. Our final result is as follows.

\begin{theorem} \label{thm:exceptell2}
Let $E$ be an elliptic curve over $\Q_2$ with potentially good reduction, semistability defect $e_E = 24$, conductor~$\NE$ and inertial type~$\tau_E$.  Then one of the following holds.
\begin{enumalph}
 \item If $\NE = 2^3$, then $\tau_E \simeq \tauex{1}$.
 \item If $\NE = 2^4$, then $\tau_E \simeq \tauex{1} \otimes \epschar{-4}$.
 \item If $\NE = 2^6$, then $\tau_E \simeq \tauex{1} \otimes \epschar{8}$ or $\tau_E = \tauex{1} \otimes \epschar{-8}$.
 \item If $\NE = 2^7$, then $\tau_E$ is isomorphic to one of $\tauex{2}$, $\tauex{2} \otimes \epschar{-4}$, $\tauex{2} \otimes \epschar{8}$ or~$\tauex{2} \otimes \epschar{-8}$.
\end{enumalph}
\label{T:primtypes}
\end{theorem}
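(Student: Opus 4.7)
The plan combines a direct verification on the model curves $E_{i,r}$ from \eqref{E:exceptCurves} with a completeness argument via the exceptional supercuspidal classification.

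By Lemma~\ref{L:exceptional=24}, the hypothesis $e = 24$ forces $\tau$ to be exceptional supercuspidal, and Lemma~\ref{L:phi}(iv) identifies $\Phi \simeq \SL_2(\F_3)$. I would begin by running Tate's algorithm on each of the eight curves $E_{i,r}$ for $i \in \{1, 2\}$ and $r \in \{\pm 1, \pm 2\}$, confirming that all have additive, potentially good reduction with $e = 24$, and reading off the conductors from the Kodaira symbols; in particular $N_{E_{1,1}} = 2^7$ and $N_{E_{2,-1}} = 2^3$, legitimising the definitions $\tauex{1}:=\tau_{1,1}$ and $\tauex{2}:=\tau_{2,-1}$ as exceptional inertial types of the stated conductors.

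Next, since $E_{i,r}$ is the quadratic twist of $E_{i,r'}$ by $r/r'$, Example~\ref{exm:twist} gives $\tau_{i,r} \simeq \tau_{i,r'} \otimes \chichar{r/r'}|_{I_2}$. Choosing $r \in \{\pm 1, \pm 2\}$ as representatives of the ramified classes in $\Q_2^\times/\Q_2^{\times 2}$, the twisting characters restricted to $I_2$ range over $\{1, \epschar{-4}, \epschar{8}, \epschar{-8}\}$, so the eight types $\tau_{i,r}$ realise $\tauex{i}$ together with $\tauex{i} \otimes \xi$ for $\xi \in \{\epschar{-4}, \epschar{\pm 8}\}$. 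The conductors computed by Tate's algorithm on each twisted model match those asserted in (a)--(d): the four twists of $\tauex{2}$ spread over $2^3, 2^4, 2^6$, while the four twists of $\tauex{1}$ all remain at $2^7$ (since $\cond(\tauex{1}) = 2^7$ already dominates $2\cond(\xi)$ for every twisting character of conductor $\leq 2^3$).

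The main obstacle is completeness: showing that every inertial type arising from an elliptic curve $E/\Q_2$ with $e = 24$ is among the eight listed. For this I would invoke the parameterisation of exceptional supercuspidal representations by triples $(L, M, \chi)$ from Bushnell--Henniart~\cite[\S 42]{bh06}. The constraint $\Phi \simeq \SL_2(\F_3)$ forces $L\,|\,\Q_2$ to be a tamely ramified cubic extension (uniquely determined over $\Q_2^{\textup{un}}$, hence controlled over $\Q_2$), and $M\,|\,L$ to be the ramified quadratic subextension of the inertial field of $E$. The character $\chi$ of $W_M$ is then constrained by the central character condition $\det \rho_E = \omega$ (Lemma~\ref{lem:centralChar}) and the conductor bound $\ord_2(N_E) \leq 8$ (Lemma~\ref{L:condBound}). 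Enumerating admissible triples modulo the equivalences of \cite[\S 42]{bh06} should reduce to exactly the eight types above, each realised by a model twist; as a cross-check, one may invoke the explicit classification of wild Galois representations with $e = 24$ in Coppola~\cite{Cop20a, Cop20b}.
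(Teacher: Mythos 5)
Your realization half—running Tate's algorithm on the eight model curves $E_{i,r}$, noting they are quadratic twists of one another, and using Example~\ref{exm:twist} to spread the twists over the conductors $2^3,2^4,2^6,2^7$—matches the paper's treatment. The divergence, and the gap, is in your completeness argument. You propose to parameterise all admissible exceptional types by triples $(L,M,\chi)$ \`a la Bushnell--Henniart, impose the central-character and conductor constraints, and assert this ``should reduce to exactly the eight types above.'' That last step is precisely what needs proof and is left undone: the enumeration of $\chi$ on $W_M$ of bounded conductor, modulo the equivalence of triples, is a nontrivial computation (indeed the paper only carries out a version of it \emph{after} the theorem, in section~\ref{sec:explchar2}, for the two model curves, and there it requires norm-equation calculations in degree-$6$ local fields to pin down $\chi$). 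As written, the proposal assumes the count it is meant to establish.

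The paper's route to completeness is quite different and much cheaper. Since $e=24$, Lemma~\ref{L:exceptional=24} gives $\rhobar_{E,3}(G_2)=\GL_2(\F_3)$, so $K=\Q_2(E[3])$ is a $\tilde S_4$-extension of $\Q_2$. Bayer--Rio classified all such extensions and they are exactly the fields $\Q_2(E_{i,r}[3])$ for the eight model curves. Hence the inertial field $L=\Qun_2 K$ of $E$ coincides with that of some $E_{i,r}$. Then, because $\Phi=\Gal(L\,|\,\Qun_2)\simeq\SL_2(\F_3)$ has a \emph{unique} irreducible $2$-dimensional complex representation with image inside $\SL_2(\C)$, the inertial type is already determined by $L$, so $\tau\simeq\tau_{i,r}$. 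This field-theoretic argument avoids the triple enumeration altogether. Your approach could in principle be made to work, but you would need to actually carry out the classification of characters $\chi$ (the appeal to Coppola is a reasonable cross-check but not a substitute, since the equivalence used there is coarser—Galois representations up to abstract isomorphism—than the one needed here).

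A small additional note: you say the cubic extension $L\,|\,\Q_2$ is ``uniquely determined over $\Q_2^{\textup{un}}$, hence controlled over $\Q_2$.'' It is unique up to isomorphism as an abstract field, but the triples $(L,M,\chi)$ of \cite[\S 42]{bh06} live inside a fixed algebraic closure and the equivalence relation on them is delicate; some care is needed to avoid over- or under-counting.
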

 
\begin{proof} For $i = 1, 2$ and $r = \pm 1, \pm 2$, let $K_{i, r} := \Q_2(E_{i, r})$, where $E_{i,r}$ is given by \eqref{E:exceptCurves}. 
The fields $K_{i,r}$ give the set of all $\widetilde{S_4}$-extensions of $\Q_2$, where $\widetilde{S_4} \simeq \GL_2(\F_3)$ is the double cover of $S_4$: see Bayer--Rio \cite[Table 10]{BayRio}).
Table~\ref{table:types-Q2:ex} lists a polynomial whose splitting field is $K_{i,r}$ for each $i$ and $r$. 

Let $K=\Q_2(E[3])$, $G=\Gal(K\,|\,\Q_2)$
and $L = \Q_2^{\textup{un}} K$ be the inertial field of $E$.
From the proof of Lemma~\ref{L:exceptional=24}, we know that $G \simeq \widetilde{S_4}$ 
a double cover of $\mathrm{P}(\rhobar_{E,3}) \simeq S_4$. Therefore, there is a choice of~$i,r$ such that  both $\tau$ and~$\tau_{i,r}$ 
fix the extension $L \supset \Qun_2$. We have $\Gal(L\,|\,\Qun_2) \simeq \Phi \simeq \SL_2(\F_3)$ by Lemma~\ref{L:phi}, and since there is only 
one irreducible $\GL_2(\C)$-representation of $\SL_2(\F_3)$ whose image is contained in $\SL_2(\C)$,
we conclude that $\tau \simeq \tau_{i,r}$.

Note that, for $r=-1,2,-2$, the curve $E_{1,r}$ is the quadratic twist of $E_{1,1}$ by $-4,8,-8$, respectively, therefore
$\tau_{1,-1} \simeq \tauex{1} \otimes \epschar{-4}$, 
$\tau_{1,2} \simeq \tauex{1} \otimes \epschar{8}$ and
$\tau_{1,-2} \simeq \tauex{1} \otimes \epschar{-8}$.
Similarly, we obtain 
$\tau_{2,-1} \simeq \tauex{2} \otimes \epschar{-4}$, 
$\tau_{2,2} \simeq \tauex{2} \otimes \epschar{8}$ and
$\tau_{2,-2} \simeq \tauex{2} \otimes \epschar{-8}$.

Finally, observe that
the conductor of  $E_{1,1}$ is $2^3$ and that of $E_{2,1}$ is $2^7$, 
thus the eight types split in the 4 cases of the theorem according to their conductors. 
\end{proof}

\subsection{Explicit characters} \label{sec:explchar2}

Recall that, as in previous sections, we aim for an explicit description of the types in Theorem~\ref{T:primtypes} in terms of characters. 
As explained in section~\ref{S:exceptional}, an exceptional type is determined by a triple $(L,M,\chi)$, where $L\,|\,\Q_2$ is a cubic extension, 
$M\,|\,L$ a quadratic extension and $\chi \colon W_M \rightarrow \C^\times$ a character such that $\chi \neq \chi^s$ where $s$ is conjugation on $M\,|\,L$.
Furthermore, we only need to specify~$\chi$ on~$I_M$. 

\begin{lemma}\label{lem:subfields}We keep the above notations. All fields $K_{i, r}$ contain the cubic field $F_2$, and the unique
unramified quadratic extension $\Q_4(\sqrt[3]{2})$ of $F_2$. Moreover: 
\begin{enumalph}
\item The fields $K_{1, r}$, $r = \pm 1, \pm 2$, contain the two ramified quadratic extensions $\Q_2(\sqrt[3]{2},\alpha_{\pm})$ of $F_2$, where $\alpha_{-}$ is a root of
$x^2 + \sqrt[3]{2}x + \sqrt[3]{2}$ and $\alpha_{+}$ a root of $x^2 + \sqrt[3]{2}x + \sqrt[3]{2}^2 + \sqrt[3]{2}$.
\item The fields $K_{2, r}$, $r = \pm 1,\pm 2$, contain the two ramified quadratic extensions $\Q_2(\sqrt[3]{2},\beta_{\pm})$ of $F_2$, where $\beta_{-}$ is a root of
$x^2 + 2x + \sqrt[3]{2} + 2$ and $\beta_{+}$ a root of $x^2 + 2x + \sqrt[3]{2} + 6$.
\end{enumalph}
\end{lemma}

\begin{proof}
This follows from direct calculations. 
\end{proof}

\begin{lemma}\label{lem:grp-structures-pr-2:ex} Let $K \supseteq F_2$ be one of the quadratic extensions given in Lemma~\textup{\ref{lem:subfields}}.  
Then, Tables \textup{\ref{table:norm-grps-pr-2:ex}}, \textup{\ref{table:cubic-quo-unit-grps-pr-2:ex}}, \textup{\ref{table:cubic-gen-unit-grps-pr-2:ex}} 
and \textup{\ref{table:U-quo-unit-grps-pr-2:ex}} give the structure and generators for the groups $U_{\frakf}$, $(\calO_K/\frakf)^\times/U_\frakf$
and $(\Z_2[\sqrt[3]{2}]/\frakq)^\times/U_\frakf$, respectively.
\end{lemma}

\begin{proof}
The proof uses the same kind of induction as in Lemma~\ref{lem:grp-structures-3}.
\end{proof}

\begin{table}
\normalfont\small\renewcommand{\arraystretch}{1.25}
\begin{tabular}{ >{$}c<{$} || >{$}c<{$} | >{$}c<{$} || >{$}r<{$} >{$}l<{$} }
K\,|\,F_2 & \mathfrak{f} & f & \multicolumn{2}{c}{$U_{\frakf}$} \\\hline\hline
\multirow{10}{*}{$\Q_4(\sqrt[3]{2})$} & \multirow{10}{*}{$\frakp^f$} & 1 & \multicolumn{2}{c}{trivial} \\
&   & 2 & \langle u_4 \rangle \!\!\!\!\!&\simeq  \Z/2 \\
&   & 3 & \langle u_4 \rangle \!\!\!\!\!&\simeq \Z/4 \\
&   & 4 & \langle u_3\rangle \times \langle u_4 \rangle \!\!\!\!\!&\simeq \Z/2 \times \Z/4 \\
&   & 5 & \langle u_3\rangle \times \langle u_4 \rangle \!\!\!\!\!&\simeq \Z/2 \times \Z/8 \\
&   &  6 & \langle u_2\rangle \times \langle _3 \rangle \times \langle u_4 \rangle \!\!\!\!\!&\simeq \Z/2 \times \Z/2 \times \Z/8 \\
&   &  7 &  \langle u_1 \rangle \times \langle u_2 \rangle\times \langle u_3 \rangle \times \langle u_4 \rangle \!\!\!\!\!&\simeq \Z/2 \times \Z/2 \times \Z/2 \times \Z/8 \\
&   &  8 &  \langle u_1 \rangle \times \langle u_2 \rangle\times \langle u_3 \rangle \times \langle u_4 \rangle \!\!\!\!\!&\simeq \Z/2 \times \Z/2 \times \Z/2 \times \Z/16 \\
&   &  9 &  \langle u_1 \rangle \times \langle u_2 \rangle\times \langle u_3 \rangle \times \langle u_4 \rangle \!\!\!\!\!&\simeq \Z/2 \times \Z/2 \times \Z/4 \times \Z/16 \\
&   &  \ge 10 &  \langle u_1 \rangle \times \langle u_2 \rangle\times \langle u_3 \rangle \times \langle u_4 \rangle \!\!\!\!\!&\simeq\\
& & & \multicolumn{2}{c}{\hfill $\Z/2 \times \Z/2^{\lfloor \frac{f-10}{3} \rfloor + 2} \times \Z/2^{\lfloor \frac{f-9}{3}\rfloor + 2} \times \Z/2^{\lfloor\frac{f -8}{3}\rfloor + 4}$}\\
\hline
\multirow{9}{*}{$\Q_2(\sqrt[3]{2}, \alpha_{\pm})$} & \multirow{9}{*}{$\frakp^f$} & 1, 2, 3, 4 & \multicolumn{2}{c}{trivial} \\
&   &  5, 6 &  \langle u_4 \rangle \!\!\!\!\!&\simeq \Z/2 \\
&   &  7, 8 & \langle u_3 \rangle \times \langle u_4 \rangle \!\!\!\!\!&\simeq \Z/2 \times \Z/2 \\
&   & 9, 10 & \langle u_3 \rangle \times \langle u_4 \rangle \!\!\!\!\!&\simeq \Z/2 \times \Z/4 \\
&   &  11, 12 & \langle u_2\rangle \times \langle u_3 \rangle \times \langle u_4 \rangle \!\!\!\!\!&\simeq \Z/2 \times \Z/2 \times \Z/4 \\
&   &  13, 14 &  \langle u_1 \rangle \times \langle u_2 \rangle\times \langle u_3 \rangle \times \langle u_4 \rangle \!\!\!\!\!&\simeq \Z/2 \times \Z/2 \times \Z/2 \times \Z/4 \\
&   &  15, 16 &  \langle u_1 \rangle \times \langle u_2 \rangle\times \langle u_3 \rangle \times \langle u_4 \rangle \!\!\!\!\!&\simeq \Z/2 \times \Z/2 \times \Z/2 \times \Z/8 \\
&   &  17, 18 &  \langle u_1 \rangle \times \langle u_2 \rangle\times \langle u_3 \rangle \times \langle u_4 \rangle \!\!\!\!\!&\simeq  \Z/2 \times \Z/2 \times \Z/4 \times \Z/8 \\
&   &  \ge 19 &  \langle u_1 \rangle \times \langle u_2 \rangle\times \langle u_3 \rangle \times \langle u_4 \rangle \!\!\!\!\!&\simeq\\
& & & \multicolumn{2}{c}{\hfill $\Z/2 \times \Z/2^{\lfloor \frac{f - 19}{6}\rfloor + 2} \times \Z/2^{\lfloor \frac{f - 17}{6}\rfloor + 2} \times \Z/2^{\lfloor\frac{f - 15}{6}\rfloor + 3}$}\\
\hline
\multirow{14}{*}{$\Q_2(\sqrt[3]{2}, \beta_{\pm})$} & \multirow{14}{*}{$\frakp^f$} & 1, 2 & \multicolumn{2}{c}{trivial} \\
&   & 3, 4 & \langle u_4 \rangle \!\!\!\!\!&\simeq  \Z/2 \\
&   & 5, 6 & \langle u_4 \rangle \!\!\!\!\!&\simeq \Z/4 \\
&   & 7, 8 & \langle u_3 \rangle \times \langle u_4\rangle \!\!\!\!\!&\simeq \Z/2 \times \Z/4 \\
&   & 9, 10, 11, 12 & \langle u_3 \rangle \times \langle u_4\rangle \!\!\!\!\!&\simeq \Z/2 \times \Z/8 \\
&   &  13, 14 & \langle u_2\rangle \times \langle u_3 \rangle \times \langle u_4 \rangle \!\!\!\!\!&\simeq \Z/2 \times \Z/2 \times \Z/8 \\
&   &  15, 16 & \langle u_2\rangle \times \langle u_3 \rangle \times \langle u_4 \rangle \!\!\!\!\!&\simeq \Z/2 \times \Z/2 \times \Z/16 \\
&   & 17, 18 &  \langle u_1 \rangle \times \langle u_2 \rangle\times \langle u_3 \rangle \times \langle u_4 \rangle \!\!\!\!\!&\simeq \Z/2 \times \Z/2 \times \Z/2 \times \Z/16 \\
&   &  19, 20 &  \langle u_1 \rangle \times \langle u_2 \rangle\times \langle u_3 \rangle \times \langle u_4 \rangle \!\!\!\!\!&\simeq \Z/2 \times \Z/2 \times \Z/4 \times \Z/16 \\
&   &  21, 22 &  \langle u_1 \rangle \times \langle u_2 \rangle\times \langle u_3 \rangle \times \langle u_4 \rangle \!\!\!\!\!&\simeq \Z/2 \times \Z/2 \times \Z/4 \times \Z/32 \\
&   &  23, 24 &  \langle u_1 \rangle \times \langle u_2 \rangle\times \langle u_3 \rangle \times \langle u_4 \rangle \!\!\!\!\!&\simeq \Z/2 \times \Z/4 \times \Z/4 \times \Z/32 \\
&   &  25, 26 &  \langle u_1 \rangle \times \langle u_2 \rangle\times \langle u_3 \rangle \times \langle u_4 \rangle \!\!\!\!\!&\simeq \Z/2 \times \Z/4 \times \Z/8 \times \Z/32 \\
&   &  27, 28 &  \langle u_1 \rangle \times \langle u_2 \rangle\times \langle u_3 \rangle \times \langle u_4 \rangle \!\!\!\!\!&\simeq \Z/2 \times \Z/4 \times \Z/8 \times \Z/64 \\
&   &  \ge 29 &  \langle u_1 \rangle \times \langle u_2 \rangle\times \langle u_3 \rangle \times \langle u_4 \rangle \!\!\!\!\!&\simeq\\ 
& & & \multicolumn{2}{c}{\hfill $\Z/2 \times \Z/2^{\lfloor \frac{f - 29}{6}\rfloor + 3}  \times \Z/2^{\lfloor \frac{f - 25}{6}\rfloor + 3} \times \Z/2^{\lfloor\frac{f - 21}{6}\rfloor + 5}$}\\

\end{tabular}
\caption{\rule{0pt}{2.5ex}Group structure of $U_{\frakf}$ for the quadratic extensions $K\,|\,F_2$ contained in $K_{1, r}$ and $K_{2,r}$, with $r = \pm1,\pm2$}
\label{table:norm-grps-pr-2:ex}
\end{table}

\begin{table}
\normalfont\small\renewcommand{\arraystretch}{1.25}
\begin{tabular}{ >{$}c<{$} || >{$}c<{$} | >{$}c<{$} || >{$}r<{$} >{$}l<{$} }
{\Mquad} & \mathfrak{f} & f & \multicolumn{2}{c}{$(\Z_2[\sqrt[3]{2}]/\frakq)^\times/U_{\frakf}$} \\\hline\hline
\Q_4(\sqrt[3]{2}) & \frakp^f & \geq 1 & \multicolumn{2}{c}{trivial} \\\hline
\multirow{2}{*}{$\Q_2(\sqrt[3]{2}, \alpha)$} &\multirow{2}{*}{$\frakp^f$} & 1,2 & \multicolumn{2}{c}{trivial} \\
&&\ge 3&\langle 1 - \sqrt[3]{2}\rangle\!\!\!\!&\simeq \Z/2\\\hline
\multirow{2}{*}{$\Q_2(\sqrt[3]{2}, \beta)$} &\multirow{2}{*}{$\frakp^f$} & 1,2,3,4, 5, 6, 7, 8, 9, 10 & \multicolumn{2}{c}{trivial} \\
&&\ge 11 & \langle 1 + \sqrt[3]{2}^2\rangle\!\!\!\!&\simeq \Z/2\\

\end{tabular}
\caption{\rule{0pt}{2.5ex}Group structure of $(\Z_2[\sqrt[3]{2}]/\frakq)^\times/U_{\frakf}$ for the quadratic extensions $K\,|\,F_2$ contained in $K_{1, r}$ and $K_{2,r}$, with $r = \pm1,\pm2$}
\label{table:cubic-quo-unit-grps-pr-2:ex}
\end{table}

\begin{table}
\normalfont\small\renewcommand{\arraystretch}{1.25}
\begin{tabular}{ >{$}c<{$} || >{$}c<{$} }
\text{Generators}& \Q_4(\sqrt[3]{2}) = \Q_2(\sqrt[3]{2},\nu): \nu^2 + \nu + 1 = 0\\\hline
u_1 & -2\nu - 1 \\
u_2 &  (-2\nu - 1)\sqrt[3]{2}^2 + (2\nu + 1)\sqrt[3]{2} - 2\nu - 7\\
u_3 & \sqrt[3]{2}^2 - 2\nu - 1\\
u_4 & (-2\nu - 1)\sqrt[3]{2}^2 + (2\nu + 1)\sqrt[3]{2} + \nu - 1\\\midrule
\multicolumn{1}{c}{}& \Q_2(\sqrt[3]{2}, \alpha_{\pm})\\\hline
u_1 & \sqrt[3]{2}^2 + \sqrt[3]{2} + 1\\
u_2 & -\sqrt[3]{2}^2\alpha_{\pm} - 1\\
u_3 & (-2\sqrt[3]{2}^2 + 4)\alpha_{\pm} - \sqrt[3]{2}^2 - \sqrt[3]{2} + 3\\
u_4 & (-\sqrt[3]{2}^2 + 2)\alpha_{\pm} - \sqrt[3]{2}^2 - \sqrt[3]{2} + 1\\
u_5 & -\alpha_{\pm} + 1\\\midrule
\multicolumn{1}{c}{}& \Q_2(\sqrt[3]{2}, \beta_{\pm})\\\hline
u_1 & \sqrt[3]{2}^2 + 1\\
u_2 & -\beta_{\pm} - 1\\
u_3 & (-2\sqrt[3]{2} + 2)\beta_{\pm} - \sqrt[3]{2}^2 + 1\\
u_4 & (-\sqrt[3]{2} + 1)\beta_{\pm} + \sqrt[3]{2}^2 - \sqrt[3]{2} + 1\\
u_5 & (-\sqrt[3]{2} + 1)\beta_{\pm} - 2\sqrt[3]{2} + 1\\

\end{tabular}
\caption{\rule{0pt}{2.5ex}Generators for the group $(\calO_K/\frakf)^\times/U_{\frakf}$ for the quadratic extensions $K\,|\,F_2$ contained in $K_{1, r}$ and $K_{2,r}$, with $r = \pm1,\pm2$}
\label{table:cubic-gen-unit-grps-pr-2:ex}
\end{table}

\begin{table}
\normalfont\small\renewcommand{\arraystretch}{1.25}
\begin{tabular}{ >{$}c<{$} || >{$}c<{$} | >{$}c<{$} || >{$}r<{$} >{$}l<{$} }
K\,|\,F_2 & \mathfrak{f} & f & \multicolumn{2}{c}{$(\calO_K/\frakf)^\times/U_{\frakf}$} \\\hline\hline
\multirow{8}{*}{$\Q_4(\sqrt[3]{2})$} & \multirow{8}{*}{$\frakp^f$} & 1 &  \langle u_4 \rangle \!\!\!\!\!&\simeq\Z/3 \\
&   & 2 & \langle u_4 \rangle \!\!\!\!\!&\simeq  \Z/6 \\
&   & 3 & \langle u_4 \rangle \!\!\!\!\!&\simeq \Z/12 \\
&   & 4 & \langle u_3 \rangle \times \langle u_4 \rangle \!\!\!\!\!&\simeq \Z/2 \times \Z/12 \\
&   & 5 & \langle u_3 \rangle \times \langle u_4 \rangle \!\!\!\!\!&\simeq \Z/2 \times \Z/24 \\
&   &  6 & \langle u_2 \rangle \times \langle u_3 \rangle\times \langle u_4 \rangle \!\!\!\!\!&\simeq \Z/2 \times \Z/2 \times \Z/24 \\
&   &  \ge 7 & \langle u_1 \rangle \times \langle u_2 \rangle\times \langle u_3 \rangle \times \langle u_4 \rangle\!\!\!\!\!&\simeq\\
& & &\multicolumn{2}{c}{\hfill $\Z/2 \times \Z/2^{\lfloor \frac{f - 7}{3} \rfloor + 1} \times \Z/2^{\lfloor \frac{f - 6}{3}\rfloor + 1} \times \Z/(3\cdot 2^{\lfloor\frac{f - 5}{3}\rfloor + 3})$}\\\hline
\multirow{12}{*}{$\Q_2(\sqrt[3]{2}, \alpha_{\pm})$} & \multirow{12}{*}{$\frakp^f$} & 1 & \multicolumn{2}{c}{trivial} \\
&   & 2 & \langle u_5 \rangle \!\!\!\!\!&\simeq  \Z/2 \\
&   & 3 & \langle u_5 \rangle \!\!\!\!\!&\simeq \Z/4 \\
&   & 4, 5 & \langle u_4 \rangle \times \langle u_5 \rangle \!\!\!\!\!&\simeq \Z/2 \times \Z/4 \\
&   &  6, 7 & \langle u_3 \rangle \times \langle u_4 \rangle \times \langle u_5 \rangle \!\!\!\!\!&\simeq \Z/2 \times \Z/2 \times \Z/4 \\
&   &  8, 9 & \langle u_3 \rangle \times \langle u_4 \rangle \times \langle u_5 \rangle \!\!\!\!\!&\simeq \Z/2 \times \Z/2 \times \Z/8 \\
&   &  10, 11 &\langle u_2 \rangle \times \langle u_3 \rangle \times \langle u_4 \rangle \times \langle u_5 \rangle \!\!\!\!\!&\simeq \Z/2 \times \Z/2 \times \Z/2 \times \Z/8 \\
&   &  12, 13 & \langle u_1 \rangle \times \langle u_2 \rangle \times \langle u_3 \rangle \times \langle u_4 \rangle \times \langle u_5 \rangle \!\!\!\!\!&\simeq \Z/2 \times \Z/2 \times \Z/2 \times \Z/2 \times \Z/8 \\
&   &  14, 15 &  \langle u_1 \rangle \times \langle u_2 \rangle \times \langle u_3 \rangle \times \langle u_4 \rangle \times \langle u_5 \rangle \!\!\!\!\!&\simeq \Z/2 \times \Z/2 \times \Z/2 \times \Z/2 \times \Z/16 \\
&   &  16, 17 &  \langle u_1 \rangle \times \langle u_2 \rangle \times \langle u_3 \rangle \times \langle u_4 \rangle \times \langle u_5 \rangle \!\!\!\!\!&\simeq \Z/2 \times \Z/2 \times \Z/2 \times \Z/4 \times \Z/16 \\
&   &  \ge 18 & \langle u_1 \rangle \times \langle u_2 \rangle\times \langle u_3 \rangle \times \langle u_4 \rangle \times \langle u_5 \rangle \!\!\!\!\!&\simeq \\
& & & \multicolumn{2}{c}{\hfill $\Z/2 \times \Z/2 \times \Z/2^{\lfloor \frac{f - 18}{6} \rfloor + 2} \times \Z/2^{\lfloor \frac{f - 16}{6}\rfloor + 2} \times \Z/2^{\lfloor\frac{f - 14}{6}\rfloor + 4}$}\\
\hline
\multirow{13}{*}{$\Q_2(\sqrt[3]{2}, \beta_{\pm})$} & \multirow{13}{*}{$\frakp^f$} & 1 & \multicolumn{2}{c}{trivial} \\
&   & 2, 3 & \langle u_5 \rangle \!\!\!\!\!&\simeq  \Z/2 \\
&   & 4, 5 & \langle u_4 \rangle \times \langle u_5 \rangle \!\!\!\!\!&\simeq \Z/2 \times \Z/2 \\
&   &  6, 7 &\langle u_3 \rangle \times \langle u_4 \rangle\times \langle u_5 \rangle \!\!\!\!\!&\simeq \Z/2 \times \Z/2 \times \Z/2 \\
&   &  8, 9 &\langle u_2 \rangle \times \langle u_3 \rangle\times \langle u_4 \rangle \times \langle u_5 \rangle \!\!\!\!\!&\simeq \Z/2 \times \Z/2 \times \Z/2 \times \Z/2 \\
&   &  10 &\langle u_2 \rangle \times \langle u_3 \rangle\times \langle u_4 \rangle \times \langle u_5 \rangle \!\!\!\!\!&\simeq \Z/2 \times \Z/2 \times \Z/2 \times \Z/4 \\
&   &  11 &\langle u_2 \rangle \times \langle u_3 \rangle\times \langle u_4 \rangle \times \langle u_5 \rangle \!\!\!\!\!&\simeq \Z/2 \times \Z/2 \times \Z/4 \times \Z/4 \\
&   &  12, 13 & \langle u_1 \rangle \times \langle u_2 \rangle\times \langle u_3 \rangle \times \langle u_4 \rangle \times \langle u_5 \rangle \!\!\!\!\!&\simeq \Z/2 \times \Z/2 \times \Z/2 \times \Z/4 \times \Z/4 \\
&   &  14, 15 &\langle u_1 \rangle \times \langle u_2 \rangle\times \langle u_3 \rangle \times \langle u_4 \rangle \times \langle u_5\rangle \!\!\!\!\!&\simeq \Z/2 \times \Z/2 \times \Z/4 \times \Z/4 \times \Z/4 \\
&   &  16, 17 &\langle u_1 \rangle \times \langle u_2 \rangle\times \langle u_3 \rangle \times \langle u_4 \rangle \times \langle u_5\rangle \!\!\!\!\!&\simeq \Z/2 \times \Z/2 \times \Z/2 \times \Z/4 \times \Z/8 \\
&   &  18, 19 &\langle u_1 \rangle \times \langle u_2 \rangle\times \langle u_3 \rangle \times \langle u_4 \rangle \times \langle u_5\rangle \!\!\!\!\!&\simeq \Z/2 \times \Z/2 \times \Z/4 \times \Z/8 \times \Z/8\\
&   &  \ge 20 & \langle u_1 \rangle \times \langle u_2 \rangle\times \langle u_3 \rangle \times \langle u_4 \rangle \times \langle u_5 \rangle \!\!\!\!\!&\simeq\\
& & & \multicolumn{2}{c}{\hfill $\Z/2 \times \Z/2 \times \Z/2^{\lfloor \frac{f - 20}{6} \rfloor + 3} \times \Z/2^{\lfloor \frac{f - 18}{6}\rfloor + 3} \times \Z/2^{\lfloor\frac{f - 16}{6}\rfloor + 3}$}\\

\end{tabular}
\caption{\rule{0pt}{2.5ex}Group structure of $(\calO_{\Mquad}/\frakf)^\times/U_{\frakf}$ for the quadratic extensions $K\,|\,F_2$ contained in $K_{1, r}$ and $K_{2,r}$, with $r = \pm1,\pm2$}
\label{table:U-quo-unit-grps-pr-2:ex}
\end{table}

\begin{table}
\normalfont\small\renewcommand{\arraystretch}{1.25}
\begin{tabular}{ >{$}c<{$} || >{$}c<{$}| >{$}c<{$} || >{$}c<{$} | >{$}c<{$} ||  >{$}c<{$} | >{$}c<{$} }
{\Mquad} & \mathfrak{f} & f & r & \textup{values of~$\chi$ on generators} & \tau|_{F_2} & \condexp(\tau|_{F_2}) \\\hline\hline
\multirow{4}{*}{$\Q_2(\sqrt[3]{2},\alpha_{+})$} & \multirow{4}{*}{$\frakp^f$} & 3 & 4 & i &\tauSCnew{\frakp}{\Q_2(\sqrt[3]{2},\alpha_{+}),3,4} & 5\\
 &  & 6 & 4 & -1,\,1,\, i & \tauSCnew{\frakp}{\Q_2(\sqrt[3]{2},\alpha_{+}),6,4} & 8\\
 &  & 12 & 4 & -1, -1,\,1,\, 1,\, i& \tauSCnew{\frakp}{\Q_2(\sqrt[3]{2},\alpha_{+}),12,4} & 14\\ 
 &  & 12 & 4 & -1,\,1,\,1,\, -1,\, i & \tauSCnew{\frakp}{\Q_2(\sqrt[3]{2},\alpha_{+}),12,4} & 14\\\hline
\multirow{4}{*}{$\Q_2(\sqrt[3]{2},\beta_{+})$} & \multirow{4}{*}{$\frakp^f$} & 11 & 4 & -1,\,1,\, -i,\,i & \tauSCnew{\frakp}{\Q_2(\sqrt[3]{2},\beta_{+}),11,4} & 17\\
 &  & 11 & 4 & 1,\,1,\, i,\,-i & \tauSCnew{\frakp}{\Q_2(\sqrt[3]{2},\beta_{+}),11,4} & 17\\ 
 &  & 11 & 4 & 1,\,-1,\, -i,\,i & \tauSCnew{\frakp}{\Q_2(\sqrt[3]{2},\beta_{+}),11,4} & 17\\
 &  & 11 & 4 & -1,\, -1,\, -i,\,i& \tauSCnew{\frakp}{\Q_2(\sqrt[3]{2},\beta_{+}),11,4} & 17\\
 
\end{tabular}
% \end{center}
\caption{\rule{0pt}{2.5ex}Restrictions of primitive inertial types from $\Q_2$ to $F_2$}
\label{table:type-cubic_root_2}
\end{table}

For the rest of this section, we write $F_2 \colonequals \Q_2(\sqrt[3]{2})$.

\begin{proposition}\label{prop:imprimitive-triple} Let $\tau$ be an exceptional type arising from $E$ over $\Q_2$. Then $\tau|_{I_{F_2}}$ is one of the (non-exceptional) supercuspidal types listed in
Table~\textup{\ref{table:type-cubic_root_2}}.
\end{proposition}

\begin{proof}  Let $\tau := \rho_E |_{I_2}$ be an exceptional type arising from $E$ over $\Q_2$. From Theorem~\ref{thm:exceptell2} and its proof we know $\tau$ corresponds to a field $L: = K_{i,r}$ for some $i,r$.
Since $F_2$ and its two conjugated extensions
are the unique cubic extension of $\Q_2$ inside all the $K_{i,r}$, we conclude that
$\tau|_{I_{F_2}}$ is imprimitive by Proposition~\ref{prop:triply-imprimitive:2}. Furthermore, $\tau|_{I_{F_2}}$ has conductor exponent $5$, $8$, $14$, or $17$.
Let $K\,|\,F_2$ be the quadratic extension from which $\tau|_{I_{F_2}}$ is induced. Let $\chi\colon W_K \to \C^\times$ be the character such that
\[ \tau|_{I_{F_2}} = \rho_E|_{I_{F_2}} \]
where
\[ \rho_E|_{G_{F_2}} \colonequals \Ind_{K}^{F_2}\chi. \]
By the conductor exponent formula~\eqref{E:condBC}, $\chi$ has conductor exponent $3$, $6$, $11$, or $12$. We also know that
$$\ker \chi\spArt = \Nm_{L|K}(\calO_L^\times)/U_{\frakf}\hookrightarrow (\calO_K/\frakf)^\times/U_{\frakf}.$$
In Table~\ref{table:kernel-Q2:ex}, we compute the norm group $\Nm_{L|K}(\calO_L^\times)/U_{\frakf}$ for each possible $L$ and each of the ramified
quadratic extensions $K\,|\,F_2$ listed in Lemma~\ref{lem:subfields} (code at \cite{ourcode}). Since the projective image of $\mathrm{P}(\rho_E|_{G_{F_2}}))$ is isomorphic to $D_4$, we conclude that $\chi$ is of order~4 on $I_K$. Now, using all the previous constraints we determine the \textit{unique} quadratic extension $K\,|\,F_2$ from which $\tau|_{I_{F_2}}$ is induced, together with the corresponding character $\chi$.
Indeed, the order of the norm groups suffices to decide what is the correct quadratic extension in all cases.
For the correct quadratic extension $K\,|\,F_2$, Table~\ref{table:kernel-Q2:ex} also gives the generators of $\ker \chi\spArt$ in terms of those of the group
$(\calO_K/\frakf)^\times/U_{\frakf}$ listed in Table~\ref{table:cubic-gen-unit-grps-pr-2:ex}. 
Table~\ref{table:type-cubic_root_2} contains, up to complex conjugation, all the inertial types satisfying those conditions. Therefore, $\tau|_{I_{F_2}}$ must be one the types listed in it.
\end{proof}
 
\begin{table}
\normalfont\small\renewcommand{\arraystretch}{1.25}
\begin{tabular}{ >{$}c<{$} || >{$}c<{$} | >{$}c<{$} | >{$}c<{$}  >{$}c<{$} |  >{$}c<{$}}
E & f & K\,|\,F_2 & \multicolumn{2}{c|}{$\Nm_{L|K}(\calO_L^\times)/U_\frakf \leq (\calO_K/\frakf)^\times/U_\frakf$} & \condexp(\tau_E|_{F_2}) \\
\hline\hline
\multirow{2}{*}{$E_{1, 1}$} &  \multirow{2}{*}{$3$}  & \Q_2(\sqrt[3]{2},\alpha_{-}) & \multicolumn{2}{c|}{$\Z/2$} &  \multirow{2}{*}{$5$}\\
& &  \Q_2(\sqrt[3]{2},\alpha_{+}) & \multicolumn{2}{c|}{\textup{trivial}} & \\
\hline
\multirow{2}{*}{$E_{1,-1}$} &  \multirow{2}{*}{$6$} & \Q_2(\sqrt[3]{2},\alpha_{-}) & \multicolumn{2}{c|}{$\Z/2 \times \Z/2 \times \Z/2$} &  \multirow{2}{*}{$8$}\\
& &  \Q_2(\sqrt[3]{2},\alpha_{+}) &  \multicolumn{2}{c|}{$\left\langle u_3 u_4 u_5^2, u_4 \right\rangle\simeq \Z/2 \times \Z/2$} & \\
\hline
\multirow{3}{*}{$E_{1, 2}$} &  \multirow{3}{*}{$12$}  & \Q_2(\sqrt[3]{2},\alpha_{-})& \multicolumn{2}{c|}{$\Z/2 \times \Z/2 \times \Z/2 \times \Z/2 \times \Z/4$} & \multirow{3}{*}{$14$}\\
& & \multirow{2}{*}{$\Q_2(\sqrt[3]{2},\alpha_{+})$} & \left\langle u_1u_2u_4, u_1u_2 u_5^4, u_1u_2 u_3 u_5^4, u_2 u_5^6\right\rangle \!\!\!\!\!&\phantom{cccccccccc}\\ 
& & & \multicolumn{2}{c|}{\hfill $\simeq \Z/2 \times \Z/2 \times \Z/2 \times \Z/4$} & \\\hline
\multirow{3}{*}{$E_{1,-2}$} &  \multirow{3}{*}{$12$} & \Q_2(\sqrt[3]{2},\alpha_{-})  &  \multicolumn{2}{c|}{$\Z/2 \times \Z/2 \times \Z/2 \times \Z/2 \times \Z/4$} & \multirow{3}{*}{$14$}\\
& & \multirow{2}{*}{$\Q_2(\sqrt[3]{2},\alpha_{+})$} &  \left\langle u_1u_2u_4, u_2 u_5^4, u_2u_3u_5^4, u_1u_3 u_5^2\right\rangle \!\!\!\!\!&\phantom{cccccccccc}\\ 
& & & \multicolumn{2}{c|}{\hfill $\simeq\Z/2 \times \Z/2 \times \Z/2 \times \Z/4$} & \\
\midrule
\multirow{2}{*}{$E_{2, 1}$} &  \multirow{2}{*}{$11$}  & \Q_2(\sqrt[3]{2},\beta_{-}) & 
\multicolumn{2}{c|}{$\left\langle u_3 u_4^2u_5^2, u_2 u_4^2, u_3 u_4 u_5^3\right\rangle \simeq\Z/2 \times \Z/2 \times \Z/4$} & \multirow{2}{*}{$17$}\\ 
& & \Q_2(\sqrt[3]{2},\beta_{+}) &  \multicolumn{2}{c|}{$\Z/2 \times \Z/2 \times \Z/2 \times \Z/4$} & \\
\hline
\multirow{2}{*}{$E_{2, -1}$} &  \multirow{2}{*}{$11$}  & \Q_2(\sqrt[3]{2},\beta_{-}) & 
\multicolumn{2}{c|}{$\left\langle u_2, u_3 u_4^2 u_5^2, u_4 u_5\right\rangle \simeq\Z/2 \times \Z/2 \times \Z/4$}  & \multirow{2}{*}{$17$}\\
& & \Q_2(\sqrt[3]{2},\beta_{+}) &  \multicolumn{2}{c|}{$\Z/2 \times \Z/2 \times \Z/2 \times \Z/4$} & \\
\hline
\multirow{2}{*}{$E_{2, 2}$} &  \multirow{2}{*}{$11$}  &  \Q_2(\sqrt[3]{2},\beta_{-}) &  
\multicolumn{2}{c|}{$\left\langle u_3u_5^2, u_2, u_3 u_4 u_5^3\right\rangle \simeq\Z/2 \times \Z/2 \times \Z/4$}   & \multirow{2}{*}{$17$}\\
& & \Q_2(\sqrt[3]{2},\beta_{+}) &  \multicolumn{2}{c|}{$\Z/2 \times \Z/2 \times \Z/2 \times \Z/4$} & \\
\hline
\multirow{2}{*}{$E_{2,-2}$} &  \multirow{2}{*}{$11$} & \Q_2(\sqrt[3]{2},\beta_{-}) & 
\multicolumn{2}{c|}{$\left\langle u_3u_5^2, u_2 u_4^2, u_2 u_3u_4 u_5^3\right\rangle \simeq\Z/2 \times \Z/2 \times \Z/4$}  & \multirow{2}{*}{$17$}\\
& & \Q_2(\sqrt[3]{2},\beta_{+}) &  \multicolumn{2}{c|}{$\Z/2 \times \Z/2 \times \Z/2 \times \Z/4$} & \\

\end{tabular}
\caption{\rule{0pt}{2.5ex} Norm groups $\Nm_{L|K}(\calO_L^\times)/U_{\frakf}$ and kernels $\ker \chi\spArt$ of the inducing characters for the restrictions of exceptional types to $F_2$.}
\label{table:kernel-Q2:ex}
\end{table}

\subsection{Explicit realization} \label{sec:exm:2ex}

In Table~\ref{table:types-Q2:ex} we give all exceptional inertial types for elliptic curves over~$\Q_2$ with potentially good reduction together with a curve realizing each type.
In particular, this shows that all the possible types over~$\Q_2$ we compute indeed arise from elliptic curves.  A similar corollary holds as Corollary~\ref{cor:algQ3}.

\begin{table}
\normalfont\small\renewcommand{\arraystretch}{1.25}
\begin{tabular}{ >{$}c<{$} || >{$}c<{$} | >{$}c<{$} | >{$}c<{$} | >{$}c<{$} || >{$}c<{$} }
\tau & e & \condexp(\tau) & L' & \Gal(L\,|\,\Q_2) & E \\
\hline\hline
\tauex{1} &  24 & 3 & \LMFDBL{2.8.10.2} & \LMFDBG{8T23} \simeq \GL_2(\F_3) & \LMFDBE{648b1} \\
\hline
\tauex{1} \otimes \epschar{-4} &  24 & 4 & \LMFDBL{2.8.12.29} & \LMFDBG{8T23} \simeq \GL_2(\F_3)& \LMFDBE{1296c1} \\
\hline
\tauex{1} \otimes \epschar{8} &  24 & 6 & \LMFDBL{2.8.16.73} & \LMFDBG{8T23} \simeq \GL_2(\F_3)& \LMFDBE{5184e1} \\
\hline
\tauex{1} \otimes \epschar{-8} &  24 & 6 & \LMFDBL{2.8.16.71} & \LMFDBG{8T23} \simeq \GL_2(\F_3)& \LMFDBE{5184w1} \\
\hline
\tauex{2} &  24 & 7 & \LMFDBL{2.8.22.132} & \LMFDBG{8T23} \simeq \GL_2(\F_3)&\LMFDBE{3456a1} \\
\hline
\tauex{2} \otimes \epschar{-4} & 24 & 7 & \LMFDBL{2.8.22.136} & \LMFDBG{8T23} \simeq \GL_2(\F_3)& \LMFDBE{3456e1}\\
\hline
\tauex{2} \otimes \epschar{8} &  24 & 7 & \LMFDBL{2.8.22.138} & \LMFDBG{8T23} \simeq \GL_2(\F_3)&\LMFDBE{3456o1} \\
\hline
\tauex{2} \otimes \epschar{-8} &  24 & 7 & \LMFDBL{2.8.22.137} & \LMFDBG{8T23} \simeq \GL_2(\F_3)&\LMFDBE{3456c1} \\
\end{tabular}
\caption{\rule{0pt}{2.5ex}Elliptic curves for each exceptional inertial type over $\Q_2$.}
\label{table:types-Q2:ex}
\end{table}

\bibliographystyle{plain}

\begin{thebibliography}{10}

\bibitem{BR22}
Alexander J.\ Barrios and Manami Roy, \emph{Representations attached to elliptic curves with a non-trivial odd torsion point}, Bull.\ Lond.\ Math.\ Soc.\ \textbf{54} (2022), no.~5, 1846--1861.

\bibitem{BayRio}
Pilar Bayer and Anna Rio,
\newblock \emph{Dyadic exercises for octahedral extensions},
\newblock J. Reine Angew. Math. \textbf{517} (1999), 1--17.

\bibitem{BenSki}
Michael~A. Bennett and Chris~M. Skinner,
\newblock \emph{Ternary {D}iophantine equations via {G}alois representations and
  modular forms},
\newblock Canad. J. Math.\ \textbf{56} (2004), no.~1, 23--54.

\bibitem{BCDFmulti}
Nicolas Billerey, Imin Chen, Luis~V. Dieulefait, and Nuno Freitas,
\newblock \emph{A multi-{F}rey approach to {F}ermat equations of signature $(r,r,p)$},
\newblock Trans. Amer. Math. Soc.\ \textbf{371} (2019), no.~4, 8651--8677.

%\bibitem{BP}
%Jeremy Booher and Stefan Patrikis.
%\newblock $G$-valued Galois deformation rings when $p \neq p$.
%\newblack {\em Math. Res. Lett.}, 26(4):973--990, 2019.

\bibitem{Magma}
Wieb Bosma, John Cannon, and Catherine Playoust, \emph{The Magma algebra system. I. The user language}, J.\ Symbolic Comput. \textbf{24} (1997), no.\ 3--4, 235--265.

\bibitem{bcdt01}
Christophe Breuil, Brian Conrad, Fred Diamond, and Richard Taylor,
\newblock \emph{On the modularity of elliptic curves over {$\mathbf Q$}: wild 3-adic
  exercises},
\newblock J. Amer. Math. Soc.\ \textbf{14} (2001), no.~4, 843--939.

\bibitem{bm02}
Christophe Breuil and Ariane M\'ezard,
\newblock \emph{Multiplicit\'es modulaires et repr\'esentations de {${\rm GL}_2({\bf
  Z}_p)$} et de {${\rm Gal}(\overline{\bf Q}_p/{\bf Q}_p)$} en {$l=p$}}, with an 
  appendix by Guy Henniart,
\newblock Duke Math. J. \textbf{115} (2002), no.~2, 205--310.

\bibitem{bh06}
Colin~J.\ Bushnell and Guy Henniart,
\newblock {\em The local {L}anglands conjecture for {$\rm GL(2)$}}, 
  Grundlehren Math. Wiss., vol.~335, 
\newblock Springer-Verlag, Berlin, 2006.

\bibitem{bh14}
 Colin~J. Bushnell and Guy Henniart,
 \newblock \emph{To an effective local Langlands correspondence},
 \newblock Mem. Amer. Math. Soc.\ \textbf{231} (2014), no.~1087.

\bibitem{bk93}
Colin~J. Bushnell and Philip~C. Kutzko,
\newblock \emph{The admissible dual of $\GL(N)$ via compact open subgroups},
\newblock Ann.\ Math. Stud., vol.\ 129, 
\newblock Princeton University Press, Princeton, NJ, 1993.

\bibitem{car86}
Henri Carayol,
\newblock \emph{Sur les repr\'esentations {$l$}-adiques associ\'ees aux formes
  modulaires de {H}ilbert},
\newblock Ann. Sci. \'Ecole Norm. Sup. (4)\ \textbf{19} (1986), vol.~3, 409--468.

\bibitem{coh00}
Henri Cohen,
\newblock \emph{Advanced topics in computational number theory}, 
\newblock Grad. Texts Math., vol.~193, 
\newblock Springer-Verlag, New York, 2000.

\bibitem{cdt99}
Brian Conrad, Fred Diamond, and Richard Taylor,
\newblock \emph{Modularity of certain potentially {B}arsotti-{T}ate {G}alois
  representations},
\newblock J. Amer. Math. Soc.\ \textbf{12} (1999), vol.~2, 521--567.

\bibitem{Cop20a}
Nirvana Coppola,
\newblock \emph{Wild Galois representations: elliptic curves over a 3-acid field},
\newblock Acta Arith. \textbf{195} (2020), vol.~3, 289--303.

\bibitem{Cop20b}
Nirvana Coppola,
\newblock \emph{Wild Galois representations: elliptic curves over a 2-adic field with non-abelian inertia action}, 
\newblock Int. J. Number Theory \textbf{16} (2020), vol.~6, 1199--1208.

\bibitem{DDT}
Henri Darmon, Fred Diamond, and Richard Taylor,
\newblock \emph{Fermat's last theorem},
\newblock Current developments in mathematics, 1995 (Cambridge, MA),
Int. Press, Cambridge, MA, 1994, 1--154.

% \bibitem{Deligne}
% Pierre Deligne.
% \newblock Les constantes des {\'e}quation fonctionnelles.
% \newblock In {\em Modular functions of one variable II}
% (Proc. Internat. Summer School, Univ. Antwerp, Antwerp, 1972), pp. 501--597, Lecture Notes in Math., Vol. 349, Springer, Berlin, 1973.

\bibitem{ourcode}
Lassina Demb\'el\'e, Nuno Freitas, and John Voight, \emph{Elliptic curve inertial types}, 2024, \url{https://github.com/jvoight/ectypes}.

\bibitem{Diamond}
Fred Diamond,
\newblock \emph{An extension of {W}iles' results},
\newblock Modular forms and {F}ermat's last theorem ({B}oston, {MA},
  1995), Springer, New York, 1997, 475--489. 

\bibitem{dia95}
Fred Diamond and Kenneth Kramer,
\newblock \emph{Classification of $\overline{\rho}_{E,p}$ by the $j$-invariant of
  ${E}$},
\newblock Modular forms and {F}ermat's last theorem ({B}oston, {MA},
  1995), Springer, New York, 1997, 491--498.

\bibitem{DPT}
Luis Dieulefait, Ariel Pacetti, and Panagiotis Tsaknias, 
\newblock \emph{On the number of Galois orbits of newforms},
\newblock J. Eur. Math. Soc. (JEMS) \textbf{23} (2021), no.~8, 2833--2860.

\bibitem{dd08}
Tim Dokchitser and Vladimir Dokchitser,
\newblock \emph{Root numbers of elliptic curves in residue characteristic 2},
\newblock Bull. Lond. Math. Soc. \textbf{40} (2008), no.~3, 516--524.

\bibitem{dd-kodaira}
Tim Dokchitser and Vladimir Dokchitser, \emph{A remark on Tate's algorithm and Kodaira types}, Acta Arith.\ \textbf{160} (2013), no.~1, 95--100.

\bibitem{dd-crelle}
Tim Dokchitser and Vladimir Dokchitser, \emph{Euler factors determine local Weil representations}, J.~Reine Angew.~Math.\ \textbf{717} (2016), 35--46.

%\bibitem{dd15}
%Tim Dokchitser and Vladimir Dokchitser.
%\newblock Local invariants of isogenous elliptic curves.
%\newblock {\em Trans. Amer. Math. Soc.}, 367(6):4339--4358, 2015.

\bibitem{FNS23n}
N.~Freitas, B.~Naskr{\k e}cki, and M.~Stoll,
\newblock \emph{The generalized Fermat equation with exponents $2,3,n$},
\newblock Compos.~Math. \textbf{156} (2020), no.~1, 77--113.

\bibitem{fk17}
Nuno Freitas and Alain Kraus,
\newblock On the symplectic type of isomorphisms of the $p$-torsion of elliptic
  curves.
\newblock Mem. Amer. Math. Soc. 277 (2022), no. 1361.

\bibitem{ger78}
Paul G\'erardin,
\newblock \emph{Facteurs locaux des alg\`ebres simples de rang {$4$}.\,{I}},
\newblock Reductive groups and automorphic forms, {I} ({P}aris,
  1976/1977), Publ. Math. Univ. Paris VII, vol.~1, 
  Univ. Paris VII, Paris, 1978, 37--77.

\bibitem{kra90}
Alain Kraus,
\newblock \emph{Sur le d\'efaut de semi-stabilit\'e des courbes elliptiques \`a
  r\'eduction additive},
\newblock Manuscripta Math.\ \textbf{69} (1990), vol.~4, 353--385.

\bibitem{lat15}
Peter Latham,
\newblock \emph{Unicity of types for supercuspidal representations of $p$-adic $\mathbf{SL}_2$},
\newblock J. Number Theory \textbf{162} (2016), 376--390.

\bibitem{LW2012}
David Loeffler and Jared Weinstein,
\newblock \emph{On the computation of local components of a newform},
\newblock Math.\ Comp.\ \textbf{81} (2012), no.~278, 1179--1200.

\bibitem{LW2015}
David Loeffler and Jared Weinstein,
\newblock \emph{Erratum: ``{O}n the computation of local components of a newform''},
\newblock Math. Comp.\ \textbf{84} (2015), no.~291, 355--356.

\bibitem{Newton12}
Rachel Newton, \emph{Explicit local reciprocity for tame extensions}, Math.\ Proc.\ Cambridge Philos.\ Soc., \textbf{152} (2012), no.~3, 425--454. 

% \bibitem{pac13}
% Ariel Pacetti,
% \newblock \emph{On the change of root numbers under twisting and applications},
% \newblock Proc. Amer. Math. Soc.\ \textbf{141} (2013), no.~8, 2615--2628.

\bibitem{Pas05}
Vytautas Pa\v{s}k\={u}nas,
\newblock \emph{Unicity of types for supercuspial representations of $\GL_N$},
\newblock Proc. London Math. Soc. \textbf{91} (2005), 623--654.

\bibitem{roh94}
David~E. Rohrlich,
\newblock \emph{Elliptic curves and the {W}eil-{D}eligne group},
\newblock Elliptic curves and related topics, CRM
  Proc. Lecture Notes, vol.~4, Amer. Math. Soc., Providence, RI, 1994, 125--157. 

% \bibitem{ser77}
% Jean-Pierre Serre,
% \newblock {\em Linear representations of finite groups}.
% \newblock Grad. Texts Math., vol.~42, Springer-Verlag, New York-Heidelberg, 1977.
% \newblock Translated from the second French edition by Leonard L. Scott,
%   Graduate Texts in Mathematics, Vol. 42.

\bibitem{ST1968}
Jean-Pierre Serre and John Tate,
\newblock \emph{Good reduction of abelian varieties},
\newblock Ann. of Math. (2)\ \textbf{88} (1968), 492--517.

\bibitem{SilvermanII}
Joseph~H. Silverman,
\newblock \emph{Advanced topics in the arithmetic of elliptic curves},
\newblock Grad. Texts in Math., vol.~151,
\newblock Springer-Verlag, New York, 1994.

\bibitem{Tate79}
J.~Tate,
\newblock Number theoretic background, 
\newblock \emph{Automorphic forms, representations and $L$-functions}, 
\newblock Proc. Sympos. Pure Math., vol.~33, part 2, 1979, 3--26.
    
%\bibitem{Volkov}
%Maja Volkov,
%\newblock \emph{Les repr\'esentations $\ell$-adiques associ\'ees aux
%courbes elliptiques sur $\Q_p$},
%\newblock J. reine angew. Math. {\bf 535} (2001), 65--101


% \bibitem{tat79}
% J.~Tate,
% \newblock \emph{Number theoretic background},
% \newblock Automorphic forms, representations and {$L$}-functions,
%   ({P}roc. {S}ympos. {P}ure {M}ath., {O}regon {S}tate {U}niv., {C}orvallis,
%   {O}re., 1977), {P}art 2, Proc. Sympos. Pure Math., vol.~XXXIII, 
%   Amer. Math. Soc., Providence, R.I., 1979, 3--26.

\end{thebibliography}

\end{document}